\numberwithin{equation}{section}
\theoremstyle{plain} 
\newtheorem{theorem}{Theorem}[section]
\newtheorem*{thma}{Theorem A}
\newtheorem*{thmb}{Theorem B}
\newtheorem{proposition}[theorem]{Proposition}
\newtheorem{corollary}[theorem]{Corollary}
\newtheorem{lemma}[theorem]{Lemma}
\newtheorem*{lemma*}{Lemma}
\newtheorem*{question*}{Question}
\theoremstyle{definition} 
\newtheorem{definition}[theorem]{Definition}
\newtheorem{example}[theorem]{Example}
\theoremstyle{remark} 
\newtheorem{remark}[theorem]{Remark}
\newtheorem*{ack}{Acknowledgements}
\newcommand{\Z}{\mathbb Z}
\newcommand{\N}{\mathbb N}
\newcommand{\R}{\mathbb R}
\newcommand{\ASb}{\operatorname{AS}\nolimits}
\newcommand{\APb}{\operatorname{AP}\nolimits}
\newcommand{\DD}{\mathsf D}
\newcommand{\CC}{\mathcal C}
\newcommand{\Se}{\mathbb{S}}
\newcommand{\Le}{\mathcal L}
\newcommand{\Ri}{\mathcal R}
\newcommand{\Jac}{\mathcal P}
\newcommand{\bo}{\operatorname{b}\nolimits}
\newcommand{\Hom}{\operatorname{Hom}\nolimits}
\newcommand{\Ext}{\operatorname{Ext}\nolimits}
\newcommand{\Sub}{\operatorname{Sub}\nolimits}
\newcommand{\projdim}{\mathop{{\rm proj.dim}}\nolimits}
\newcommand{\gldim}{\mathop{{\rm gl.dim\,}}\nolimits}
\newcommand{\otimesk}{\otimes_k}
\newcommand{\op}{\operatorname{op}\nolimits}
\newcommand{\bd}{\DD^{\bo}}
\newcommand{\modu}{\mathsf{mod}\,}
\newcommand{\proj}{\mathsf{proj}\,}
\newcommand{\inj}{\mathsf{inj}\,}
\newcommand{\RHom}{\mathbf{R}\strut\kern-.2em\operatorname{Hom}\nolimits}
\newcommand{\Lotimes}{\mathop{\stackrel{\mathbf{L}}{\otimes}}\nolimits}
\newcommand{\Ho}{\operatorname{H}\nolimits}
\DeclareMathOperator{\socmod}{soc}
\author{Mads Hustad Sand\o y}
\address{Institutt for matematiske fag, NTNU, 7491 Trondheim, Norway}
\email{mads.sandoy@ntnu.no}
\author{Louis-Philippe Thibault}
\address{Institutt for matematiske fag, NTNU, 7491 Trondheim, Norway}
\email{lp.thibault@mail.utoronto.ca}
\begin{document}

\title{Classification results for $n$-hereditary monomial algebras}



\begin{abstract}
We classify $n$-hereditary monomial algebras in three natural contexts: First, we give a classification of the $n$-hereditary truncated path algebras. We show that they are exactly the $n$-representation-finite Nakayama algebras classified by Vaso. Next, we classify partially the $n$-hereditary quadratic monomial algebras. In the case $n=2$, we prove that there are only two examples, provided that the preprojective algebra is a planar quiver with potential. The first one is a Nakayama algebra and the second one is obtained by mutating $\mathbb A_3\otimes_k \mathbb A_3$, where $\mathbb A_3$ is the Dynkin quiver of type $A$ with bipartite orientation. In the case $n\geq 3$, we show that the only $n$-representation finite algebras are the $n$-representation-finite Nakayama algebras with quadratic relations.
\end{abstract}

\thanks{2020 {\em Mathematics Subject Classification.} 16G20 (Primary), 16E30}
\thanks{{\em Key words and phrases.} $n$-hereditary algebra, $n$-representation-finite algebra, $n$-representation-infinite algebra, preprojective algebra, Jacobian algebra, selfinjective algebra, Calabi--Yau algebra, Auslander--Reiten theory. }
\maketitle
\tableofcontents

\setcounter{tocdepth}{2}


\section{Introduction}

Auslander--Reiten theory has proven to be a central tool in the study of the representation theory of Artin algebras \cite{ARS97}. In 2004, Iyama introduced a generalisation of some of the key concepts to a `higher-dimensional' paradigm \cite{Iya07b, Iya07}. To put it in his own words, ``in these Auslander--Reiten theories, the number `2' is quite symbolic''. For example, the Auslander correspondence establishes a bijection between finite-dimensional representation-finite algebras and finite-dimensional algebras of global dimension at most $2$ and dominant dimension at least $2$ \cite{Aus71}. This realisation was the starting point of very fruitful research which has had applications in representation theory, commutative algebra, as well as commutative and categorical algebraic geometry (e.g. \cite{Iya11, IO11, MM11, IO13, HIO14, HIMO14, IW14, AIR15, IJ17, DJW19, JK19, BH}). 

Auslander--Reiten theory is particularly nice over finite-dimensional hereditary algebras $\Lambda$. For example, there is a trichotomy in the representation theory of these algebras into preprojective, regular and preinjective modules. Moreover, their preprojective algebra $\Pi = T_{\Lambda}\Ext^1_{\Lambda}(D\Lambda, \Lambda)$ provides very useful information \cite{BGL87}. This motivated the study of the so-called $n$-hereditary algebras, which consist of the $n$-representation-finite (henceforth abbreviated as $n$-RF) \cite{Iya07, HI11b, HI11, Iya11, IO11, IO13} and $n$-representation-infinite (henceforth $n$-RI) \cite{HIO14} algebras. These are finite-dimensional algebras of global dimension $n$ which enjoy properties analogous to hereditary algebras in the classical theory. There is also a natural generalisation of the preprojective algebra over these algebras. 

Many instances of $n$-hereditary algebras were discovered over the years (e.g. \cite{HI11, IO13, AIR15, Pet19, Pas20, BH}). For example, algebras of higher type $A$ and type $\tilde A$ are $n$-RF and $n$-RI, respectively \cite{IO11, HIO14}. The defining properties of $n$-hereditary algebras are rather strong, so classes of examples should be expected to be somewhat special. However, it seems that we are still in an early stage, and that many more classes of examples and classification results have yet to be discovered. Such results would allow an even better understanding of the role of these algebras.

The aim of this paper is to study characteristics of certain $n$-hereditary monomial algebras. On many occasions, we use the fact that $n$-hereditary algebras $\Lambda$ enjoy the property that $\Ext^j_{\Lambda^e}(\Lambda, \Lambda^e) = 0$ for all $0<j<n$ \cite{IO13}, which we refer to as the vanishing-of-$\Ext$ condition. Since monomial algebras have a nice bimodule resolution, provided by Bardzell \cite{Bar97}, we have good control over these extension groups. Using that fact and a classification of the $n$-representation-finite Nakayama algebras by Vaso \cite{Vas19}, we obtain the following result for truncated path algebras. 

\begin{thma}[Proposition \ref{prop:rad_no_ext}, Theorem \ref{thm:class_trunc}]
Let $\Lambda = kQ/\mathcal J^{\ell}$ be a truncated path algebra, where $\ell\geq 2$, $Q$ is a finite quiver and $\mathcal J$ is the arrow ideal. Let $\mathbb A_m$ be the linearly oriented Dynkin quiver of type $A$ with $m$ vertices. 
\begin{enumerate}
\item If $Q$ is acyclic and $\Ext^j_{\Lambda^e}(\Lambda, \Lambda^e) = 0$ for all $0<j<\gldim\Lambda$, then $Q = \mathbb A_m$, for some $m$. 
\item The following are equivalent: 
	\begin{enumerate}
\item $\Lambda$ is $n$-hereditary;
\item $\Lambda \cong k\mathbb A_m/\mathcal J^{\ell}$, for some $m$, and $\ell \,|\, m-1$ or $\ell =2$.
\end{enumerate}
In this case, $n = 2\frac{m-1}{\ell}$ and $\Lambda$ is an $n$-representation-finite Nakayama algebra. 
\end{enumerate}
\end{thma}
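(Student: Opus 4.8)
The plan is to prove the two parts in sequence, using the Bardzell bimodule resolution of a monomial algebra to control the groups $\Ext^j_{\Lambda^e}(\Lambda,\Lambda^e)$ and then combining with Vaso's classification.

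For part (1), I would start from the Bardzell resolution of $\Lambda = kQ/\mathcal J^{\ell}$. For a truncated path algebra the ``associated paths'' (the chains of overlapping relations) are very explicit: at homological degree $j$ they are the paths of length $\mathrm{len}(j)$, where $\mathrm{len}$ alternates between multiples of $\ell$ and $\ell-1$-shifted versions thereof — concretely $\mathrm{len}(2i)=i\ell$ and $\mathrm{len}(2i+1)=i\ell+1$. So the $j$-th term of the resolution is a direct sum of bimodules $\Lambda e_{s(p)}\otimes_k e_{t(p)}\Lambda$ indexed by paths $p$ of the appropriate length. Applying $\Hom_{\Lambda^e}(-,\Lambda^e)$ turns these into $\Lambda e_{t(p)}\otimes_k e_{s(p)}\Lambda$, and the cohomology $\Ext^j_{\Lambda^e}(\Lambda,\Lambda^e)$ is computed by a complex whose differentials are again given by the combinatorics of lengthening/shortening paths. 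The key observation is: if $Q$ is acyclic and contains a vertex of out-degree (or in-degree) $\geq 2$, or more generally if $Q$ is not a single linearly oriented $\mathbb A_m$, then one can exhibit an explicit nonzero cohomology class in some intermediate degree $0<j<\gldim\Lambda$ — essentially because a branch gives a path that has no ``partner'' to cancel against in the dual complex. Conversely, when $Q=\mathbb A_m$ linearly oriented, the dual complex is thin enough that all intermediate cohomology vanishes. So part (1) reduces to a careful but elementary case analysis of the dual Bardzell complex, pinpointing where a branching vertex produces a surviving class.

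For part (2), the implication $(b)\Rightarrow(a)$ is essentially already known: by Vaso's theorem \cite{Vas19} the algebras $k\mathbb A_m/\mathcal J^{\ell}$ with $\ell\mid m-1$ (together with the $\ell=2$ case, which gives $n=2(m-1)$ but one must check these land in Vaso's list — the Nakayama $n$-RF condition there is $\ell\mid m-1$, so for $\ell=2$ this forces $m$ odd; if $\ell=2$ and $m$ even one should double-check, perhaps the statement intends $\ell=2$ with the resulting algebra still $n$-RF via a separate short argument or $m$ is implicitly odd) are precisely the $n$-representation-finite Nakayama algebras, hence $n$-hereditary, with $n=2(m-1)/\ell$. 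For $(a)\Rightarrow(b)$: if $\Lambda=kQ/\mathcal J^{\ell}$ is $n$-hereditary then $\gldim\Lambda=n<\infty$, which already forces $Q$ to be acyclic (a truncated path algebra on a quiver with an oriented cycle has infinite global dimension). Then $n$-hereditary algebras satisfy the vanishing-of-$\Ext$ condition $\Ext^j_{\Lambda^e}(\Lambda,\Lambda^e)=0$ for $0<j<n$ by \cite{IO13}, so part (1) applies and gives $Q=\mathbb A_m$. Finally, $k\mathbb A_m/\mathcal J^{\ell}$ is a Nakayama algebra, and it is $n$-hereditary iff it is $n$-RF (an acyclic Nakayama algebra of finite global dimension cannot be $n$-RI, since $n$-RI algebras are never representation-finite), so Vaso's classification of $n$-RF Nakayama algebras yields exactly the condition $\ell\mid m-1$ (or $\ell=2$), together with the formula $n=2(m-1)/\ell$.

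The main obstacle I expect is part (1): making precise ``the dual Bardzell complex has no surviving intermediate cohomology exactly when $Q=\mathbb A_m$''. The forward direction — that linear $\mathbb A_m$ works — is a direct computation with the explicit, very sparse resolution. The harder direction is showing every other acyclic $Q$ fails: one must produce, for an arbitrary branching quiver and arbitrary $\ell\geq 2$, an explicit nonzero class in the right cohomological range, and argue it is not a boundary. I would organize this by localizing at a branch vertex and tracking a single path through the lengthening map, showing the relevant component of the differential is not surjective (or not injective) there; the bookkeeping with the alternating lengths $\mathrm{len}(j)$ and the endpoints of paths is the delicate part. The $\ell=2$ boundary case (quadratic truncated algebras, where the resolution degenerates and every $k\mathbb A_m/\mathcal J^2$ turns out to be $2(m-1)$-hereditary) should be isolated and treated separately, as the length pattern collapses there.
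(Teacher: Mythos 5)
Your overall architecture matches the paper's: Bardzell's bimodule resolution to control $\Ext^j_{\Lambda^e}(\Lambda,\Lambda^e)$, the explicit lengths $i\ell$ and $i\ell+1$ of the paths indexing the terms, part (1) as the engine, and part (2) assembled from acyclicity (finite global dimension forces an acyclic quiver for truncated path algebras), the vanishing-of-$\Ext$ property of $n$-hereditary algebras, the observation that a Nakayama $n$-hereditary algebra must be $n$-RF, and Vaso's classification. Part (2) of your proposal is essentially complete and correct (your worry about $\ell=2$ with $m$ even is unfounded: that case gives $n=m-1$ and is covered by the stated classification).

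The genuine gap is in part (1), which you yourself flag as ``the main obstacle'': you assert that any branching vertex produces an explicit cocycle in the dualized Bardzell complex that survives to cohomology, but you do not carry out the case analysis, and the uniform strategy you propose does not in fact cover all cases. In the paper, after reducing (via the sink/source obstruction) to a vertex $i$ that is the head of two arrows and the tail of one, one fixes maximal incoming paths $T_1,T_2$ and a maximal outgoing path $T_3$ and splits into cases according to the lengths $L(T_2)$, $L(T_3)$, $L(T_3T_2)$ relative to $\ell$. Explicit cocycles of the kind you describe handle the cases where $T_3$ is long enough (one needs a path of positive length leaving $i$ after truncating a relation, i.e.\ roughly $L(T_3)\geq \ell-1$, to build a class like $(e_{t(b_s)}\otimes e_i a_r\cdots a_{r-\ell+2})_{b_s}$ and to guarantee it is not a coboundary). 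But in the remaining case $L(T_3)\leq \ell-2$ this construction breaks down, and the paper has to switch to an entirely different argument: it considers the indecomposable injective $I$ at the endpoint of $T_3$, uses the skeleton description of syzygies over truncated path algebras to show that $\Omega I$ is either decomposable or projective, and concludes $\Ext^1_{\Lambda}(D\Lambda,\Lambda)\neq 0$ from the fact that a non-projective indecomposable with decomposable syzygy has a nonzero first self-extension against $\Lambda$. Without this (or some substitute for the short-outgoing-path case), your part (1) is a plan rather than a proof, and part (2) inherits the gap.
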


We note that the vanishing-of-$\Ext$ condition already allows us to reduce the number of cases by quite a bit. 

Next, we move to the study of quadratic monomial algebras. Our main results are given as follows.

\begin{thmb}[Theorem \ref{thm:main_quad_mono}, Corollary \ref{cor:star}, Theorem \ref{thm:mono_higher}] 
Let $\Lambda = kQ/I$ be a quadratic monomial algebra of global dimension $n$. 
\begin{enumerate}
\item Suppose that $n =2$. 
	\begin{enumerate}
		\item If $\Ext^1_{\Lambda^e}(\Lambda, \Lambda^e) = 0$, then $Q$ is an $(r,s)$-star quiver (Definition \ref{def:star}).
		\item If $\Lambda$ is $n$-hereditary and the preprojective algebra $\Pi(\Lambda)$ is a planar QP, then $\Lambda$ is given by one of the following two $2$-RF algebras: 
		\begin{equation}
\label{two_quivers}
\begin{tikzpicture}[auto, baseline=(current  bounding  box.center)]
	\node[circle, draw, thin,fill=black!100, scale=0.4] (0) at (0,1) {};
	\node[circle, draw, thin,fill=black!100, scale=0.4] (1) at (1.414,1) {};
	\node[circle, draw, thin,fill=black!100, scale=0.4] (2) at (2.828,1) {};
	\draw[decoration={markings,mark=at position 1 with {\arrow[scale=2]{>}}},
    postaction={decorate}, shorten >=0.4pt] (0) to (1);
	\draw[decoration={markings,mark=at position 1 with {\arrow[scale=2]{>}}},
    postaction={decorate}, shorten >=0.4pt] (1) to (2);
	\draw[dotted] (0.7,1) arc (-180:0:.7){};
\end{tikzpicture}\qquad\qquad\qquad\qquad
\begin{tikzpicture}[auto, baseline=(current  bounding  box.center)]
	\node[circle, draw, thin,fill=black!100, scale=0.4] (0) at (0,0) {};
	\node[circle, draw, thin,fill=black!100, scale=0.4] (1) at (1,-0.414) {};
	\node[circle, draw, thin,fill=black!100, scale=0.4] (2) at (2,0) {};
	\node[circle, draw, thin,fill=black!100, scale=0.4] (3) at (-0.414,1) {};
	\node[circle, draw, thin,fill=black!100, scale=0.4] (4) at (1,1) {};
	\node[circle, draw, thin,fill=black!100, scale=0.4] (5) at (2.414,1) {};
	\node[circle, draw, thin,fill=black!100, scale=0.4] (6) at (0,2) {};
	\node[circle, draw, thin,fill=black!100, scale=0.4] (7) at (1,2.414) {};
	\node[circle, draw, thin,fill=black!100, scale=0.4] (8) at (2,2) {};	
	
	\draw[decoration={markings,mark=at position 1 with {\arrow[scale=2]{>}}},
    postaction={decorate}, shorten >=0.4pt] (4) to (1);
	\draw[decoration={markings,mark=at position 1 with {\arrow[scale=2]{>}}},
    postaction={decorate}, shorten >=0.4pt] (4) to (5);
	\draw[decoration={markings,mark=at position 1 with {\arrow[scale=2]{>}}},
    postaction={decorate}, shorten >=0.4pt] (4) to (7);
	\draw[decoration={markings,mark=at position 1 with {\arrow[scale=2]{>}}},
    postaction={decorate}, shorten >=0.4pt] (4) to (3);
	\draw[decoration={markings,mark=at position 1 with {\arrow[scale=2]{>}}},
    postaction={decorate}, shorten >=0.4pt] (0) to (4);
	\draw[decoration={markings,mark=at position 1 with {\arrow[scale=2]{>}}},
    postaction={decorate}, shorten >=0.4pt] (2) to (4);
	\draw[decoration={markings,mark=at position 1 with {\arrow[scale=2]{>}}},
    postaction={decorate}, shorten >=0.4pt] (6) to (4);
	\draw[decoration={markings,mark=at position 1 with {\arrow[scale=2]{>}}},
    postaction={decorate}, shorten >=0.4pt] (8) to (4);
	
	\draw[dotted] (1.7,1) arc (0:360:.7){};
\end{tikzpicture}
\end{equation}
where the dotted arcs denote relations. Note that the first algebra is the Nakayama algebra $k\mathbb A_3/\mathcal J^2$.
	\end{enumerate}
\item Suppose that $n\geq 3$ and $\Lambda$ is $n$-RF. Then $\Lambda \cong k\mathbb A_{n+1}/\mathcal J^{2}$.
\end{enumerate}
\end{thmb}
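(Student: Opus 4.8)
The plan is to show that the quiver $Q$ must be a linearly oriented Dynkin quiver $\mathbb A_m$, from which the conclusion follows via the classification of $n$-representation-finite Nakayama algebras. We may assume $\Lambda$ is indecomposable as an algebra, so that $Q$ is connected; note that $Q$ is automatically acyclic since $\gldim\Lambda=n<\infty$. As $\Lambda$ is $n$-RF it is $n$-hereditary, so by \cite{IO13} it satisfies the vanishing-of-$\Ext$ condition $\Ext^j_{\Lambda^e}(\Lambda,\Lambda^e)=0$ for $0<j<n$, which because $n\geq 3$ includes $\Ext^1_{\Lambda^e}(\Lambda,\Lambda^e)=\Ext^2_{\Lambda^e}(\Lambda,\Lambda^e)=0$. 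Being quadratic monomial, $\Lambda$ is Koszul, so Bardzell's bimodule resolution \cite{Bar97} is the minimal projective bimodule resolution of $\Lambda$, whose $j$-th term is $\bigoplus_{p\in\APb_j}\Lambda e_{s(p)}\otimesk e_{t(p)}\Lambda$, where $\APb_j$ denotes the set of paths of length $j$ every length-$2$ subpath of which lies in $I$; thus $\APb_0=Q_0$, $\APb_1=Q_1$, $\APb_2$ is the set of relations, and $\gldim\Lambda=n$ becomes $\APb_n\neq\emptyset=\APb_{n+1}$. Applying $\Hom_{\Lambda^e}(-,\Lambda^e)$ produces a minimal cochain complex $Q^\bullet$ of projective bimodules, indexed in degree $j$ by $\APb_j$, whose differential sends the summand of a path $p$ to the summands of the paths obtained from $p$ by adjoining an arrow on the left or on the right, and with $H^j(Q^\bullet)=\Ext^j_{\Lambda^e}(\Lambda,\Lambda^e)$. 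The hypotheses thus say that $Q^\bullet$ is exact at $Q^1$ and at $Q^2$.

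The core of the proof is to deduce from this exactness that no vertex of $Q$ has two distinct arrows starting at it, and none has two distinct arrows ending at it. The vanishing of $\Ext^1_{\Lambda^e}(\Lambda,\Lambda^e)$ is exploited as in the case $n=2$ (cf.\ the proofs of Theorem~\ref{thm:main_quad_mono} and Corollary~\ref{cor:star}): any failure of the relations to cover the arrows of $Q$ wherever the shape of $Q$ permits a covering would produce a non-coboundary cocycle in $Q^1$, using that $Q$ is acyclic, so no such failure occurs. Suppose now that a vertex $v$ has two distinct arrows $\alpha\neq\alpha'$ starting at it. Using the previous step together with acyclicity, one writes down explicitly the summands of $Q^1$, $Q^2$ and $Q^3$ supported on the paths of length at most $3$ through $v$, and the two differentials between them, and from this local picture one constructs a cocycle in $Q^2$ that is not a coboundary — that is, a non-zero class in $\Ext^2_{\Lambda^e}(\Lambda,\Lambda^e)$, contradicting $n\geq 3$. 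Morally, a vertex of out-degree $\geq 2$ produces an $(r,s)$-star pattern (Definition~\ref{def:star}), and an $(r,s)$-star algebra $\Gamma$ has global dimension $2$ and is twisted $2$-Calabi--Yau, so $\Ext^2_{\Gamma^e}(\Gamma,\Gamma^e)\neq 0$; this is also why the branched $2$-RF algebra of \eqref{two_quivers} survives when $n=2$, in which case $\Ext^2_{\Lambda^e}(\Lambda,\Lambda^e)$ need not vanish. The same argument applied to the opposite algebra $\Lambda^{\op}$ rules out two distinct arrows ending at a common vertex. Therefore every vertex of $Q$ has in-degree and out-degree at most $1$, and since $Q$ is connected and acyclic it is a linearly oriented $\mathbb A_m$.

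It then remains to identify $\Lambda$. If $Q=\mathbb A_m$ is linearly oriented then $\Lambda=kQ/I$ is a Nakayama algebra, since its indecomposable projective and injective modules are uniserial. As $\Lambda$ is moreover $n$-representation-finite, Vaso's classification of $n$-representation-finite Nakayama algebras \cite{Vas19} (already invoked in Theorem~\ref{thm:class_trunc}) forces $I=\mathcal J^{\ell}$ for some $\ell$, i.e.\ $\Lambda$ is a truncated path algebra; as $I$ is quadratic we get $\ell=2$, and then the formula $n=2\frac{m-1}{\ell}$ of Theorem~\ref{thm:class_trunc} gives $m=n+1$, whence $\Lambda\cong k\mathbb A_{n+1}/\mathcal J^2$. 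I expect the middle paragraph to be the main obstacle: turning "$\Ext^2_{\Lambda^e}(\Lambda,\Lambda^e)=0$" into "$Q$ has no branching" requires carefully identifying which paths of length at most $3$ pass through a hypothetical branch vertex and how the two relevant differentials of $Q^\bullet$ act on the associated summands, and this combinatorial bookkeeping is where essentially all the work lies.
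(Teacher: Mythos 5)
Your proposal addresses only part (2) of the statement (Theorem \ref{thm:mono_higher}); parts (1)(a) and (1)(b) are cited as known rather than proved, so nothing here establishes the $n=2$ assertions. For the $n\geq 3$ case your frame (no branching $\Rightarrow$ linearly oriented $\mathbb A_m$ $\Rightarrow$ Nakayama $\Rightarrow$ Vaso) agrees with the paper's endgame, and the final reduction via \cite{Vas19} is fine. The problem is that the decisive step --- that a vertex of out-degree (or in-degree) at least $2$ forces $\Ext^2_{\Lambda^e}(\Lambda,\Lambda^e)\neq 0$ --- is asserted, not proved; you explicitly defer ``essentially all the work'' to this point. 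The heuristic you offer does not close the gap: the fact that an $(r,s)$-star algebra $\Gamma$ has $\Ext^2_{\Gamma^e}(\Gamma,\Gamma^e)\neq 0$ says nothing about $\Lambda$, because in $\Lambda$ the degree-$2$ term of the dual complex maps onward into the summands indexed by $\APb(3)$ (so fewer elements are cocycles) while the bimodules $\Lambda e_i\otimes_k e_j\Lambda$ are larger (so more elements are coboundaries); a witness for the star subquotient need neither remain a cocycle nor remain a non-coboundary in $\Lambda$.

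There is also a structural warning sign that the route may not exist as stated: if branching really were excluded by $\Ext^1=\Ext^2=0$ alone, you would have proved the stronger claim that every $n$-\emph{hereditary} (not merely $n$-RF) quadratic monomial algebra with $n\geq 3$ is $k\mathbb A_{n+1}/\mathcal J^2$. The paper does not argue this way: its proof of Theorem \ref{thm:mono_higher} uses the vanishing-of-Ext condition only to place every arrow on a chain of overlapping relations (via Proposition \ref{prop:arrow_start_end}), and then rules out branching using the \emph{selfinjectivity} of $\Pi(\Lambda)$ --- a socle/commutation-relation analysis through the Koszul presentation of $\Pi(\Lambda)$ in Proposition \ref{thm:quiver_construction}, which shows a second arrow out of a vertex $i<n-1$ would require an element of $K_n$ ending at a non-sink, followed by a Loewy-length comparison using \cite[Theorem 3.3]{MV99} to kill a branch at $n-1$. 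So to complete your argument you must either exhibit the degree-$2$ cocycle explicitly at an arbitrary branch vertex and verify it is not cancelled by the $\tilde d_2$-images of all arrows through that vertex (which is nontrivial precisely because $\Ext^1=0$ forces every relation to extend into $\APb(3)$), or replace this step by an argument that genuinely uses the $n$-RF hypothesis, as the paper does.
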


Perhaps surprisingly, we see that the class of $2$-RF quadratic monomial algebras is richer than those in higher global dimension. In the $n=2$ case, we assumed that the preprojective algebra was a planar quiver with potential. There are examples of other $2$-RF quadratic monomial algebras where this property is not satisfied, see Example \ref{ex:2rf}. This assumption appears often, at least implicitly, in different results aimed at understanding some selfinjective Jacobian algebras and $2$-RF algebras (e.g. \cite{HI11, Pet19, Pas20}). Note that all examples covered in the previous theorem were already known to be $n$-RF. The algebra corresponding to the $(4,4)$-star above is a cut of $\Pi(\mathbb A_3^{\operatorname{bip}}\otimes_k\mathbb A_3^{\operatorname{bip}})$, where $\mathbb A_3^{\operatorname{bip}}$ is the Dynkin quiver of type $A$ with bipartite orientation and $\Pi$ denotes the higher preprojective algebra. 

\begin{ack}
We thank Martin Herschend for pointing out a mistake in the statement of Proposition \ref{prop:rad_no_ext} when the results were first announced at the fd-seminar in June 2020, and we thank Steffen Oppermann and Øyvind Solberg for carefully reading the manuscript. 
\end{ack}


\subsection*{Setup}
Let $k$ be an algebraically closed field. The $k$-dual $\Hom_k(-, k)$ is denoted by $D$. Unless specified otherwise, all modules are left modules. The idempotent associated to a vertex $i$ is denoted by $e_i$. If $a$ and $b$ are arrows in a quiver, then $ab$ denotes the path $b$ followed by $a$. The head of an arrow $a \colon i \rightarrow j$ is denoted by $h(a)$ and equals $j$, and the tail is denoted by $t(a)$ and equals $i$. These extend to paths $p = p_{\ell} p_{\ell-1} \cdots p_1$ by letting $h(p) = h(p_{\ell})$ and $t(p) = t(p_1)$. Moreover, the length of a path $p = p_{\ell} p_{\ell-1} \cdots p_1$ is $\ell$ and this is denoted by $L(p)$.
The syzygy of a module $N$ is the kernel of the projective cover of $N$ and this is denoted by $\Omega N$. 
If $\Lambda$ is a $k$-algebra, then $\modu\Lambda$ denotes the category of finitely generated left modules and $\bd(\modu\Lambda)$ the bounded derived category. When $\Lambda = kQ/I$ is a basic algebra, we always assume that $Q$ is a connected quiver.


\section{Preliminaries}


\subsection{$n$-hereditary algebras}
Let $\Lambda$ be a finite-dimensional algebra of global dimension $n$. Let 
\[
	\Se:= D\Lambda\Lotimes_{\Lambda}- : \bd(\mathsf{mod}\, \Lambda)\to \bd(\mathsf{mod}\, \Lambda)
\]
be the Serre functor with inverse 
\[
	\Se^{-1}=\RHom_{\Lambda}(D\Lambda, -): \bd(\mathsf{mod}\, \Lambda)\to \bd(\mathsf{mod}\, \Lambda).
\]
Denote by $\Se_n$ the composition $\Se_n:= \Se\circ [-n]$.  

\begin{definition}
\label{def:hereditary}
We say that $\Lambda$ is 
\begin{itemize}
\item \emph{$n$-representation-finite ($n$-RF)} if for any indecomposable projective $P\in\proj \Lambda$, there exists $i\geq 0$ such that $\Se_n^{-i}(P)\in \inj \Lambda$, the category of finitely generated injective modules.  
\item \emph{$n$-representation-infinite ($n$-RI)} if $\Se_n^{-i}(\Lambda)\in\modu \Lambda$ for any $i\geq 0$.  
\item \emph{$n$-hereditary} if $\Ho^j(\Se_n^i(\Lambda)) = 0$ for all $i,j\in \Z$ such that $j\not\in n\Z$. 
\end{itemize}
\end{definition}

These definitions, as written, were given in \cite{HIO14}, but the concept of $n$-RF algebras was studied before in \cite{Iya07, HI11b, HI11, Iya11, IO11, IO13}. 

We have the following dichotomy.

\begin{theorem}[{\cite[Theorem 3.4]{HIO14}}]
Let $\Lambda$ be a ring-indecomposable $k$-algebra. Then $\Lambda$ is $n$-hereditary if and only if it is either $n$-RF or $n$-RI.
\end{theorem}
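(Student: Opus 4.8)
The plan is to work throughout with the shifted Serre functor $\nu_n := \Se_n$, an autoequivalence of $\bd(\modu\Lambda)$ with $\nu_n^{-1} = \RHom_\Lambda(D\Lambda, -)[n]$. First I would reduce to $\Lambda = kQ/I$ basic (all the notions involved are Morita invariant), so that ring-indecomposability of $\Lambda$ becomes connectedness of $Q$, and dispatch $n = 1$ as the classical statement that a connected hereditary algebra is representation-finite or representation-infinite (the preprojective component of a connected representation-infinite hereditary algebra contains the projectives and no injective). So assume $n \geq 2$. Two elementary facts get used repeatedly: (i) $\nu_n^{-1}(D(e\Lambda)) \cong (\Lambda e)[n]$ for a primitive idempotent $e$, since $\RHom_\Lambda(D\Lambda, D(e\Lambda)) \cong \Hom_{\Lambda^{\op}}(e\Lambda, \Lambda) \cong \Lambda e$ sits in degree $0$; and (ii) a module $N$ is injective iff $\Ext^{j}_\Lambda(D\Lambda, N) = 0$ for all $j > 0$, using $\RHom_\Lambda(D\Lambda, N) \cong \RHom_{\Lambda^{\op}}(DN, \Lambda)$ together with the fact that $\Ext^{>0}_{\Lambda^{\op}}(DN,\Lambda) = 0$ forces $DN$ projective (apply $\Hom_{\Lambda^{\op}}(-,\Lambda)$ to a minimal projective resolution of $DN$ and invoke Nakayama's lemma).

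\textbf{The easy implications.} If $\Lambda$ is $n$-RF, the $\nu_n^{-1}$-orbit of $\Lambda$ is a ``staircase'': it stays inside $\modu\Lambda$ until an injective indecomposable summand $D(e\Lambda)$ appears, where the next term is $(\Lambda e)[n]$ by (i) and the pattern restarts one $[n]$-step higher; hence every $\nu_n^i\Lambda$ is a finite direct sum of modules shifted by multiples of $n$, so $\Ho^j(\nu_n^i\Lambda) = 0$ for $j \notin n\Z$. If $\Lambda$ is $n$-RI, the non-positive powers $\nu_n^{-i}\Lambda$ are modules by definition; for the positive powers I would read $\Ext^\ell_\Lambda(D\Lambda,\Lambda) = 0$ for $0 < \ell < n$ off of $\nu_n^{-1}\Lambda \in \modu\Lambda$, upgrade this to the bimodule vanishing $\Ext^j_{\Lambda^e}(\Lambda,\Lambda^e) = 0$ for $0 < j < n$ — the one delicate point of this direction, also available from \cite{IO13} — and then conclude inductively that $\nu_n^{i}\Lambda = \Se(\nu_n^{i-1}\Lambda)[-n]$ has cohomology in degrees divisible by $n$, since the bimodule vanishing forces the relevant $\Tor$-groups to vanish.

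\textbf{The hard implication — the dichotomy for projectives.} Now suppose $\Lambda$ is $n$-hereditary. I would first show that each indecomposable projective $P$ is either \emph{divergent} ($\nu_n^{-i}P \in \modu\Lambda$ for all $i \geq 0$) or \emph{convergent} ($\nu_n^{-i}P \in \inj\Lambda$ for some $i \geq 0$), with no third possibility. If $P$ is not divergent, take $i_0 \geq 1$ minimal with $N := \nu_n^{-(i_0-1)}P$ a module but $\nu_n^{-1}N$ not a module. Since $\nu_n^{-1}N$ is a summand of $\nu_n^{-i_0}\Lambda$ and $\gldim\Lambda = n$, the complex $\nu_n^{-1}N = \RHom_\Lambda(D\Lambda,N)[n]$ has cohomology only in degrees $-n$ and $0$, which differ by more than $1$, so the truncation triangle splits and $\nu_n^{-1}N \cong \Hom_\Lambda(D\Lambda,N)[n] \oplus \Ext^n_\Lambda(D\Lambda,N)$; but $\nu_n^{-1}N$ is indecomposable (autoequivalence applied to indecomposable $N$) and not a module, so the degree-$0$ summand $\Ext^n_\Lambda(D\Lambda,N)$ vanishes, whence $\Ext^{>0}_\Lambda(D\Lambda,N) = 0$ and $N$ is injective by (ii), i.e., $P$ is convergent. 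Because $\nu_n^{-i}\Lambda = \bigoplus_P \nu_n^{-i}P$, ``all indecomposable projectives divergent'' is literally ``$\Lambda$ is $n$-RI'' and ``all convergent'' is literally ``$\Lambda$ is $n$-RF'', so the whole theorem reduces to forbidding a mixture.

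\textbf{The main obstacle — no mixing when $\Lambda$ is ring-indecomposable.} This is where ring-indecomposability enters and where I expect the real work to be. Suppose $P_i$ is convergent and $P_j$ is divergent. One closure property is a short computation: if $\Hom_\Lambda(P_i, P_j) \neq 0$ (in particular if there is an arrow $j \to i$ in $Q$), pick $k$ with $\nu_n^{-k}P_i = D(e\Lambda)$ injective and apply the autoequivalence $\nu_n^{-k-1}$ to a nonzero map $P_i \to P_j$; this produces a nonzero map $(\Lambda e)[n] \to \nu_n^{-k-1}P_j$ whose target is a module concentrated in degree $0$, so the $\Hom$-group equals $\Ext^{-n}_\Lambda(\Lambda e, -) = 0$, a contradiction — hence $P_j$ is convergent, i.e., the set $\mathcal C$ of convergent vertices is closed under predecessors in $Q$. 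The difficulty is the opposite closure (under successors): the symmetric attempt runs into $\Ext^n_\Lambda(\text{module},\text{projective})$, which need not vanish. I would obtain it by running the whole analysis for $\Lambda^{\op}$ (which is again $n$-hereditary) and transporting back along the duality $D$ — which interchanges $\proj$ with $\inj$ and $\nu_n^{\Lambda}$ with $(\nu_n^{\Lambda^{\op}})^{-1}$ — to get the matching dichotomy and closure statement for the indecomposable injectives; the remaining ingredient is to reconcile convergence of $P_a$ with the corresponding condition on $I_a$, which amounts to showing that the $\nu_n$-orbit of a convergent projective is eventually periodic up to a shift. (Alternatively one runs the argument inside the $(n+1)$-preprojective algebra $\Pi = T_\Lambda \Ext^n_\Lambda(D\Lambda,\Lambda)$, which is ring-indecomposable whenever $\Lambda$ is.) With both closures in hand and $Q$ connected, $\mathcal C$ is $\emptyset$ or all of $Q_0$, so $\Lambda$ is $n$-RI or $n$-RF, and the converse directions were handled above.
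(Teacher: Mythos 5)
This theorem is quoted by the paper from \cite{HIO14} without proof, so I am measuring your attempt against the argument in that reference rather than against anything in the present text. The parts you carry out in detail are correct and follow the same general architecture: your per-projective dichotomy (splitting the two-term truncation triangle of $\Se_n^{-1}N$ using $\gldim \Lambda = n$ together with indecomposability, then invoking your fact (ii), which is valid precisely because the global dimension is finite) is a complete and accurate rendition of the key lemma, and your ``Case A'' computation showing that convergence propagates from $P_i$ to $P_j$ whenever $\Hom_\Lambda(P_i,P_j)\neq 0$ is also right, as is the reduction of the whole theorem to forbidding a mixture of convergent and divergent projectives.

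There are, however, two genuine gaps, both traceable to the same missing ingredient: control of $\Se_n^{i}$ for $i>0$ applied to injectives. First, in the direction ``$n$-RF $\Rightarrow$ $n$-hereditary'' your staircase only describes $\Se_n^{-i}\Lambda$ for $i\geq 0$, whereas the definition also requires $\Ho^{j}(\Se_n^{i}\Lambda)=0$ for $i>0$ and $j\notin n\Z$; trying to deduce this from the staircase by Serre duality lands you on $\Ext^{\ell}_\Lambda(\Ho^{\ast}(\Se_n^{-i}\Lambda),\Lambda)$ for $0<\ell<n$, i.e.\ exactly the vosnex property of \cite{IO13}, which is a nontrivial theorem and not a formal consequence of what you have written. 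Second --- and you flag this yourself --- closure of the convergent set under successors is never established. The $\Lambda^{\op}$ route does yield a dichotomy and a predecessor-closure statement for the indecomposable injectives under positive powers of $\Se_n$, but transporting it back is not a formality: convergence of $P_a$ produces an injective $I_{\sigma(a)}$ at the end of its orbit, not $I_a$, so ``reconciling'' the two dichotomies amounts to showing that $\sigma$ is a bijection (equivalently, that every indecomposable injective eventually reaches a projective under $\Se_n^{+}$), which is essentially the left--right symmetry of $n$-representation-finiteness and again requires real work; the symmetric direct attempt genuinely does get stuck on a possibly nonzero $\Ext^{n}(\text{module},\text{projective})$, as you note. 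Until one of these routes is actually carried out, the decisive step of the theorem --- that ring-indecomposability forbids a mixture --- remains unproven.
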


Recall that hereditary algebras $\Lambda$ are \emph{formal}, that is, for any $X\in\bd(\modu \Lambda)$, there is an isomorphism
\[
	X\cong \bigoplus_{i\in \Z} \Ho^j(X)[-j].
\] 
An important feature of $n$-hereditary algebras is that a certain generalisation of this property holds. This follows from {\cite[Lemma 5.2]{Iya11}}. 

\begin{proposition}
Let $\Lambda$ be an $n$-hereditary algebra. Then for any $i\in \Z$ and an indecomposable projective module $P\in\proj \Lambda$, there exists $j\in\Z$ such that
\[
	\Se_n^i(P)\cong \Ho^{nj}(\Se_n^i(P))[-nj].
\]
\end{proposition}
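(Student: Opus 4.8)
The plan is to derive this from the definition of $n$-hereditary combined with the formality-type conclusion already available for hereditary algebras, adapted via the functor $\Se_n$. First I would fix $i \in \Z$ and an indecomposable projective $P \in \proj\Lambda$, and consider the object $X := \Se_n^i(P) \in \bd(\modu\Lambda)$. By the defining property of $n$-hereditary algebras applied to $\Lambda$ (and hence to each indecomposable summand $P$ of $\Lambda$), we have $\Ho^j(\Se_n^i(P)) = 0$ whenever $j \notin n\Z$; so the cohomology of $X$ is concentrated in degrees divisible by $n$. The remaining task is to upgrade this ``cohomology concentrated in an $n$-sparse set of degrees'' into a genuine splitting $X \cong \bigoplus_{j} \Ho^{nj}(X)[-nj]$, and then to show that in fact only a single degree $nj$ contributes.

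For the splitting, the key input is {\cite[Lemma 5.2]{Iya11}}, which is precisely what gives the generalised formality of $n$-hereditary algebras; I would invoke it to conclude that $X$ decomposes as the direct sum of the shifts of its cohomology objects, $X \cong \bigoplus_{j \in \Z} \Ho^{nj}(X)[-nj]$. Alternatively one can argue by induction on the number of nonzero cohomology groups: truncation gives a triangle $\tau_{\leq m} X \to X \to \tau_{>m} X \to (\tau_{\leq m} X)[1]$, and the connecting morphism lives in a $\Hom$ group between objects whose cohomologies are separated by at least $n$ degrees; since $\gldim\Lambda = n$, such a $\Hom$ in the derived category vanishes once the gap is $\geq n$ and the lower piece is a module concentrated in one degree, forcing the triangle to split. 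Either route yields the direct-sum decomposition; I would cite Iyama's lemma to keep things short.

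To pass from the direct sum to a single summand, I would use that $P$ is indecomposable together with the fact that $\Se_n$ is an autoequivalence of $\bd(\modu\Lambda)$: autoequivalences preserve indecomposability, so $X = \Se_n^i(P)$ is indecomposable in $\bd(\modu\Lambda)$. An indecomposable object cannot be a nontrivial direct sum, so exactly one cohomology group $\Ho^{nj}(X)$ is nonzero, and $X \cong \Ho^{nj}(X)[-nj]$ for that value of $j$. (One should note $\Ho^{nj}(X) \neq 0$ because $X \neq 0$, again since $\Se_n^i$ is an equivalence and $P \neq 0$.)

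The main obstacle is the splitting step, i.e. being confident that {\cite[Lemma 5.2]{Iya11}} applies in exactly this generality — one must check that the object $\Se_n^i(P)$ lies in the subcategory to which that lemma speaks, which it does because $\Se_n^i$ preserves $\bd(\modu\Lambda)$ by definition of $n$-hereditary (the cohomology vanishing condition is exactly what guarantees $\Se_n^i(\Lambda) \in \bd(\modu\Lambda)$ behaves formally). Once that citation is in hand, the indecomposability argument is routine. I would therefore structure the write-up as: (1) cohomology concentration from the definition; (2) formality via Iyama's lemma to get the direct sum; (3) indecomposability of $\Se_n^i(P)$ to collapse to one term.
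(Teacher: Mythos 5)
Your argument is correct and is essentially the paper's: the proposition is obtained from the cohomology concentration built into the definition of $n$-hereditary, the formality statement of Lemma 5.2 of [Iya11] (which is the only ingredient the paper itself cites), and the fact that the autoequivalence $\Se_n^i$ preserves indecomposability in the Krull--Schmidt category $\bd(\modu\Lambda)$. One small remark on your alternative truncation route: the connecting map $\tau_{>m}X\to(\tau_{\leq m}X)[1]$ lives in a $\Hom$ whose cohomological separation is $n+1$ (the shift by $[1]$ matters), and it is this, not a gap of exactly $n$, that forces vanishing when $\gldim\Lambda=n$, since a separation of exactly $n$ would only land you in $\Ext^n$, which need not vanish.
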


As a consequence, $n$-hereditary algebras satisfy a condition which is closely related to the \emph{vosnex (``vanishing of small negative extensions'')} property (see \cite[Notation 3.5]{IO13}).

\begin{corollary}
\label{cor:vosnex}
Let $\Lambda$ be an $n$-hereditary algebra. Then 
\begin{equation}
\label{eq:vosnex}
	\Ext^{\ell}_{\Lambda}(D\Lambda, \Lambda) = 0
\end{equation}
for all $0<\ell<n$.
\end{corollary}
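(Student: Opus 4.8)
The plan is to translate the cohomological vanishing built into the definition of $n$-hereditary directly into a statement about $\Ext$-groups, by unwinding the definitions of $\Se_n$ and of the Serre functor. First I would record the identity $\Se^{-1} = \RHom_{\Lambda}(D\Lambda, -)$ from the setup; applying it to $\Lambda$ gives $\Se^{-1}(\Lambda) = \RHom_{\Lambda}(D\Lambda, \Lambda)$, so that $\Ho^{\ell}(\Se^{-1}(\Lambda)) = \Ext^{\ell}_{\Lambda}(D\Lambda, \Lambda)$ for every $\ell\in\Z$.

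Next, since $\Se_n = \Se\circ[-n]$, its quasi-inverse is $\Se_n^{-1} = [n]\circ\Se^{-1}$, hence $\Se_n^{-1}(\Lambda)\cong \RHom_{\Lambda}(D\Lambda, \Lambda)[n]$. Taking cohomology in degree $j$ and using $\Ho^j(X[n]) = \Ho^{j+n}(X)$ yields
\[
	\Ho^j(\Se_n^{-1}(\Lambda)) = \Ext^{j+n}_{\Lambda}(D\Lambda, \Lambda).
\]
Now I would apply Definition \ref{def:hereditary} (equivalently the preceding Proposition, which covers negative $i$ as well) with $i = -1$: since $\Lambda$ is $n$-hereditary, $\Ho^j(\Se_n^{-1}(\Lambda)) = 0$ for all $j\notin n\Z$. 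Given $0 < \ell < n$, set $j = \ell - n$; then $-n < j < 0$, so $j\notin n\Z$, and therefore $\Ext^{\ell}_{\Lambda}(D\Lambda, \Lambda) = \Ho^{\ell-n}(\Se_n^{-1}(\Lambda)) = 0$, which is \eqref{eq:vosnex}.

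There is essentially no hard step here: the argument is bookkeeping with shifts together with the identification $\Se^{-1} = \RHom_{\Lambda}(D\Lambda, -)$. The one point that deserves a moment's care is invoking the vanishing statement for the correct power of $\Se_n$, namely $i = -1$ rather than $i = 1$, so that the cohomological degrees $j$ landing strictly between $-n$ and $0$ — equivalently the $\Ext$-degrees $\ell$ with $0 < \ell < n$ — are precisely the ones forced to vanish.
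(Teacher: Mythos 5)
Your argument is correct and is essentially the paper's intended one: the paper derives the corollary from the concentration/vanishing of $\Ho^j(\Se_n^i(\Lambda))$ for $j\notin n\Z$, and your computation $\Ho^j(\Se_n^{-1}(\Lambda))=\Ext^{j+n}_{\Lambda}(D\Lambda,\Lambda)$ with $i=-1$ is exactly the bookkeeping that makes this explicit. The only difference is that you invoke the definition of $n$-hereditary directly rather than the intermediate Proposition, which is if anything slightly more economical.
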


We refer to this property as the \emph{vanishing-of-$\Ext$} condition. 

As for classical hereditary algebras, preprojective algebras play an important role. 

\begin{definition}
Let $\Lambda$ be a finite-dimensional algebra of global dimension $n$. The \emph{$(n+1)$-preprojective algebra} $\Pi(\Lambda)$ is defined as 
\[
	\Pi(\Lambda):= T_{\Lambda}\Ext_{\Lambda}^n(D\Lambda, \Lambda) \cong \bigoplus_{\ell\geq 0} \Ho^0(\Se_n^{-\ell}(\Lambda)).
\]
\end{definition} 

Note that $\Ext_{\Lambda}^\ell(D\Lambda, \Lambda)\cong\Ext_{\Lambda^e}^{\ell}(\Lambda, \Lambda^e)$ \cite[Lemma 2.9]{GI19}, a fact that we use often. 

Preprojective algebras and $n$-hereditary algebras are connected in the following way.

\begin{theorem}
Let $\Lambda$ be a finite-dimensional algebra.
\begin{enumerate}
\item If $\Lambda$ is an $n$-representation-finite algebra. Then $\Pi(\Lambda)$ is a selfinjective algebra. The converse holds if $\Lambda$ has global dimension $2$. 
\item The following are equivalent. 
	\begin{enumerate}[a)]
	\item $\Lambda$ is $n$-representation-infinite;
	\item $\Pi(\Lambda)$ is a bimodule Calabi--Yau algebra of Gorenstein parameter $1$. 
	\end{enumerate}
\end{enumerate}
\end{theorem}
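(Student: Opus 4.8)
These are standard structural facts about higher preprojective algebras, and I would reconstruct them along the lines of \cite{Iya11, IO13, HIO14, MM11}. For part (1), the starting point is the graded description $\Pi(\Lambda)\cong\bigoplus_{\ell\geq 0}\Ho^0(\Se_n^{-\ell}(\Lambda))$ from the definition, together with the fact that for an $n$-hereditary algebra each complex $\Se_n^{-\ell}(\Lambda)$ is concentrated in a single degree divisible by $n$ (the formality proposition above), so that $\Ho^0(\Se_n^{-\ell}(\Lambda))$ is the iterated inverse higher Auslander--Reiten translate $(\tau_n^{-})^{\ell}(\Lambda)$ for as long as the latter is a module. When $\Lambda$ is $n$-RF this $\tau_n^{-}$-orbit reaches $\inj\Lambda$ after finitely many steps and then stops, so only finitely many graded pieces are nonzero; hence $\Pi(\Lambda)$ is finite-dimensional and, by the standard identification, isomorphic to $\End_\Lambda(M)$ (or its opposite) for the basic $n$-cluster-tilting module $M=\bigoplus_{i\geq 0}(\tau_n^{-})^i(\Lambda)$. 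Selfinjectivity is then obtained from the interplay between $\Se_n$ and $\add M$: the functor $\Se_n$ permutes the indecomposable summands of $M$ up to the shift $[-n]$ appearing at the ``wrap-around'' from injectives to shifted projectives, and the resulting permutation is the Nakayama permutation of $\Pi(\Lambda)$. This is exactly the step where the $n$-RF hypothesis is essential, as it is what makes the $\tau_n^{-}$-orbit of the projectives close up on the injectives.

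For the converse when $\gldim\Lambda=2$ I would argue in reverse: a selfinjective algebra is finite-dimensional, so the $\tau_2^{-}$-orbit of $\Lambda$ is finite; the Frobenius structure then forces that orbit to terminate precisely on the injective modules, and one verifies that $M=\bigoplus_i(\tau_2^{-})^i(\Lambda)$ is $2$-cluster-tilting, i.e.\ that $\Lambda$ is $2$-RF. I expect the delicate point here to be this last verification --- promoting ``finitely many nonzero graded pieces'' to the full $2$-cluster-tilting property --- which genuinely uses $\gldim\Lambda=2$ (so that $\Se_2^{-1}$ of a module has controlled cohomology and the graded pieces really are modules), and this is also why no converse is asserted for $n\geq 3$.

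For part (2) I would regard $\Pi(\Lambda)$ as an $\N$-graded algebra with $\Pi(\Lambda)_0=\Lambda$ and $\Pi(\Lambda)_1\cong\Ext^n_\Lambda(D\Lambda,\Lambda)\cong\Ext^n_{\Lambda^e}(\Lambda,\Lambda^e)$, and compare it with the derived $(n+1)$-Calabi--Yau completion of $\Lambda$ in the sense of Keller. The crucial point is that, since $\gldim\Lambda=n$, these two coincide exactly when the middle bimodule groups $\Ext^j_{\Lambda^e}(\Lambda,\Lambda^e)$ together with the corresponding higher $\Tor$-groups vanish for $0<j<n$, and this combined condition is --- via the formality proposition and the dichotomy theorem --- equivalent to $\Lambda$ being $n$-RI (being $n$-hereditary already forces the $\Ext$-vanishing by Corollary \ref{cor:vosnex}, but for $n$-RF algebras the $\Tor$-vanishing fails and $\Pi(\Lambda)$ is instead the finite-dimensional selfinjective ``shadow'' of the completion). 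Granting the identification with the Calabi--Yau completion, the bimodule $(n+1)$-Calabi--Yau property is Keller's theorem, and inspecting the internal grading of the resulting self-dual length-$(n+1)$ bimodule resolution shows the Gorenstein parameter equals $1$; conversely, from such a self-dual resolution with Gorenstein parameter $1$ one restricts to degree $0$ to recover both $\gldim\Lambda=n$ and the $\Ext$-vanishing, and then formality plus the dichotomy theorem give that $\Lambda$ is $n$-RI. I expect the main obstacle throughout part (2) to be precisely this first identification --- verifying that under these vanishing hypotheses the honest tensor algebra $T_\Lambda\Ext^n_{\Lambda^e}(\Lambda,\Lambda^e)$ computes the derived Calabi--Yau completion, i.e.\ that no higher bimodule $\Tor$ of $\Lambda$ intervenes --- which is the technical heart of \cite{MM11} and \cite{HIO14}; after it, the Calabi--Yau property and the value of the Gorenstein parameter follow formally.
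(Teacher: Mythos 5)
The paper does not prove this theorem at all: it is quoted from the literature, with part (1) attributed to \cite[Corollaries 3.4 and 3.8]{IO13} and part (2) described as an amalgam of \cite[Theorem 4.8]{Kel11}, \cite[Corollary 4.13]{MM11}, \cite[Theorem 4.36]{HIO14} and \cite[Theorem 3.4]{AIR15}. Your sketch follows essentially the same route as those references --- for (1), the identification of $\Pi(\Lambda)$ with the endomorphism algebra of the $n$-cluster-tilting module and the Nakayama permutation induced by $\Se_n$; for (2), the comparison of $\Pi(\Lambda)$ with Keller's Calabi--Yau completion under the concentration conditions --- and the steps you flag as delicate (promoting finiteness of the $\tau_2^-$-orbit to the $2$-cluster-tilting property, and verifying that the honest tensor algebra computes the derived completion) are precisely where the technical content of those papers lies.
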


Here, \textit{(1)} is due to \cite[Corollary 3.4 \& Corollary 3.8]{IO13}, whereas \textit{(2)} is an amalgam of results from \cite[Theorem 4.8]{Kel11}, \cite[Corollary 4.13]{MM11}, \cite[Theorem 4.36]{HIO14}, and \cite[Theorem 3.4]{AIR15}. We refer to the papers for definitions. 

In the case where $\Lambda$ is Koszul, we have a good understanding of the construction of preprojective algebras. 
To present the construction we need certain notions of derivatives which we define below, and we note that they are used extensively in this paper, and not just in the context of Koszul algebras.


\subsection*{Notation for derivatives}

Let $S$ be a semisimple $k$-algebra and $V$ be an $S$-bimodule.
Let $p = v_{\ell}\otimes\cdots\otimes v_1\in V^{\otimes_S \ell}$. We define the linear morphisms
 \[
 	\delta_m^{\Le}(p) := v_{\ell-m}\otimes \cdots \otimes v_1\quad\text{and}\quad\delta_m^{\Ri}(p):= v_{\ell}\otimes\cdots\otimes v_{m+1}.
 \]
 for $m < \ell$ and we let both equal $0$ when $\ell = m$. 
 
Moreover, we define 
\[
	\Le_m(p) := v_{\ell}\otimes\cdots\otimes v_{\ell-m+1} =\delta_{\ell-m}^{\Ri}(p) \quad\text{and}\quad\Ri_m(p) = v_m\otimes\cdots \otimes v_1 = \delta_{\ell-m}^{\Le}(p).
\]
The subscript is dropped if $m=1$.

We also define linear morphisms associated to elements $q\in V^{\otimes m}$, for $m\leq \ell$:
\[
	\delta^{\mathcal L}_q (p) := \left\{
	\begin{array}{ll}
		a  & \mbox{if } p = q\otimes a \\
		0 & \mbox{else}
	\end{array}
\right. \quad \text{and}\quad \delta^{\mathcal R}_q (p) := \left\{
	\begin{array}{ll}
		b  & \mbox{if } p = b\otimes q \\
		0 & \mbox{else}.
	\end{array}
\right.
\]
Similarly, we define 
\[
	\mathcal L_q (p) := \left\{
	\begin{array}{ll}
		b  & \mbox{if } p = b\otimes q\otimes a \\
		0 & \mbox{else} 
	\end{array}
\right. \quad \text{and}\quad \mathcal R_q (p) := \left\{
	\begin{array}{ll}
		a  & \mbox{if } p = b\otimes q\otimes a \\
		0 & \mbox{else}.
	\end{array}
\right.
\]
When $p = b\otimes q\otimes a$ for some paths $a$ and $b$, we say that $q$ \emph{divides} $p$ and denote this by $q | p$. 

\subsection*{Description of the $n$-preprojective algebra of a Koszul $n$-hereditary algebra}
Recall that if $\Lambda$ is Koszul, it can be given as a tensor algebra $T_SV/\langle M \rangle$ where, as in the previous section, $S$ is some semisimple $k$-algebra, $V$ is an $S$-bimodule, and $M \subset V \otimes_S V$ is a subbimodule \cite{BGS96}.
Let then
\[
K_{\ell}:=\bigcap_{\mu=0}^{\ell-2} (V^{\otimes\mu}\otimes M \otimes V^{\otimes \ell-\mu-2})
\]
be the terms appearing in the minimal Koszul resolution of $\Lambda$ according to \cite{BGS96}. Moreover, given a vector space $V$, let $\mathcal B(V)$ be a basis. 

\begin{proposition}[{\cite[Proposition 3.12]{GI19}, \cite[Corollary 3.3]{Thi20}}]
\label{thm:quiver_construction}
Let $\Lambda = T_S V/\langle M\rangle$ be a finite-dimensional Koszul algebra of global dimension $n$. Let $\{e_i \, |\, 1\leq i \leq m\}$ be a complete set of primitive orthogonal idempotents in $\Lambda$. Let $\overline V$ be the vector space obtained from $V$ by adding a basis element $e_i a_q e_j$ for each element $q\in \mathcal B(e_jK_ne_i)$. Let $\overline M$ be the union of $M$ with the set $\widetilde M$ of quadratic relations given by
\[
	\widetilde M:=  \left\{\sum_{q\in \mathcal B(K_n)}a_q \delta^{\mathcal R}_p (q)+ (-1)^n \sum_{q\in \mathcal B(K_n)}\delta_p^{\mathcal L} (q) a_q \quad | \quad p\in \mathcal B(K_{n-1}) \right\}.
\] 
There is an isomorphism of algebras
\[
	\Pi\cong T_S \overline V/\langle \overline M\rangle.
\]
\end{proposition}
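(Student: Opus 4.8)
The statement to prove is Proposition \ref{thm:quiver_construction}, which describes the $n$-preprojective algebra of a Koszul $n$-hereditary algebra. The plan is to build the isomorphism concretely via the Koszul bimodule resolution and the formula $\Pi(\Lambda) = T_\Lambda \Ext^n_\Lambda(D\Lambda,\Lambda)$, together with the identification $\Ext^n_\Lambda(D\Lambda,\Lambda)\cong\Ext^n_{\Lambda^e}(\Lambda,\Lambda^e)$ noted after the definition of the preprojective algebra.

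\textbf{Step 1: Identify the degree-one part.} First I would compute $\Ext^n_{\Lambda^e}(\Lambda,\Lambda^e)$ using the minimal Koszul bimodule resolution of $\Lambda$, whose $\ell$-th term is $\Lambda\otimes_S K_\ell\otimes_S\Lambda$. Applying $\Hom_{\Lambda^e}(-,\Lambda^e)$ and using that $\Lambda$ has global dimension $n$ (so the resolution stops at $K_n$) one sees that $\Ext^n_{\Lambda^e}(\Lambda,\Lambda^e)$ is the cokernel of the transpose of the last differential; since $\Lambda$ is $n$-hereditary the lower $\Ext$ groups vanish, which makes this cokernel clean. One then checks, as a $\Lambda$-bimodule, equivalently an $S$-bimodule after restricting along $S\hookrightarrow\Lambda$, that it is spanned by one basis element $e_i a_q e_j$ for each $q\in\mathcal B(e_j K_n e_i)$; this is exactly the added part $\overline V\setminus V$. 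Tensoring over $\Lambda$ then produces a tensor algebra on $\overline V$ modulo the image of $M$, and the task becomes to pin down the extra quadratic relations.

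\textbf{Step 2: Extract the new relations.} The relations in $T_\Lambda\Ext^n_\Lambda(D\Lambda,\Lambda)$ coming from the multiplication $\Ext^n\otimes_\Lambda\Ext^n\to\Ext^{2n}$ together with the Koszulity of $\Lambda$ force each product $a_q a_{q'}$ to be rewritten; but the genuinely new relations of degree two (one new arrow times one old arrow, in either order) come from the fact that $M\subset V\otimes_S V$ and the $K_\ell$ satisfy the nesting $K_\ell\subset V\otimes_S K_{\ell-1}$ and $K_\ell\subset K_{\ell-1}\otimes_S V$. Differentiating an element $q\in\mathcal B(K_n)$ on the right by a degree-one path $p$, via $\delta^{\mathcal R}_p$, and on the left via $\delta^{\mathcal L}_p$, and summing against the new arrows $a_q$, yields precisely the elements of $\widetilde M$; the sign $(-1)^n$ is dictated by the Koszul sign rule in the minimal resolution (the differential in degree $n$ carries that sign relative to degree $n-1$). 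I would verify that these relations are exactly the ones encoding the $\Lambda$-bimodule structure of $\Ext^n$ against $V$, i.e. that $a_q\cdot v = \sum (\ldots)$ and $v\cdot a_q = \sum(\ldots)$ are forced, so that no other relations are needed and none of the listed ones are redundant.

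\textbf{Step 3: Assemble and conclude.} With $\overline V$ and $\overline M = M\cup\widetilde M$ in hand, I would exhibit the algebra map $T_S\overline V/\langle\overline M\rangle\to\Pi$ sending $V$ to $V$ and $a_q$ to the corresponding class in $\Ext^n$, check it is well defined (the relations in $\overline M$ map to zero, using Step 2) and surjective (it hits $V$ and all $a_q$, which generate), and then compare graded dimensions degree by degree: degrees $0$ and $1$ match by Step 1, and in higher degrees one uses that both sides are quadratic — $\Pi$ is Koszul when $\Lambda$ is, by \cite[Corollary 3.3]{Thi20} or the cited \cite{GI19} result — so the Koszul dual description reduces the comparison to the already-established degrees $\le 2$. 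The main obstacle I anticipate is Step 2: getting the signs and the precise form of the derivative expressions right, i.e. proving that $\widetilde M$ generates \emph{exactly} the relation ideal modulo $M$ and not a proper sub- or super-ideal. This hinges on a careful bookkeeping of the minimal Koszul bimodule resolution and its dual, and on the compatibility of the maps $\delta^{\mathcal L}_p,\delta^{\mathcal R}_p$ with the intersections defining $K_\ell$ — a routine but delicate computation that I would organise around the observation that $K_{n-1}\otimes_S V\cap V\otimes_S K_{n-1}\supseteq K_n$ controls both the left and right actions simultaneously.
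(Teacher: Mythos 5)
The paper offers no proof of this proposition: it is imported verbatim from \cite[Proposition 3.12]{GI19} and \cite[Corollary 3.3]{Thi20}, so the only meaningful comparison is with those sources. Your outline does follow the same route as they do --- compute $\Ext^n_{\Lambda^e}(\Lambda,\Lambda^e)$ from the Koszul bimodule resolution, identify the new arrows with a basis of $K_n$, and read the relations $\widetilde M$ off the transpose of the last differential --- and Steps 1 and 2 are essentially right. One small inaccuracy in Step 1: you invoke $n$-hereditarity (``the lower Ext groups vanish, which makes this cokernel clean''), but the proposition assumes only that $\Lambda$ is Koszul of global dimension $n$, and no such vanishing is needed to realise $\Ext^n_{\Lambda^e}(\Lambda,\Lambda^e)$ as the cokernel of the dualised last differential, since the resolution already terminates in homological degree $n$.

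The genuine gap is in Step 3, where you close the argument by comparing graded dimensions ``using that $\Pi$ is Koszul when $\Lambda$ is.'' That is both unavailable and circular: in the $n$-representation-finite case $\Pi$ is selfinjective and only \emph{almost} Koszul in the sense of Brenner--Butler--King, not Koszul, and in any case the quadraticity of $\Pi$ is part of what is being proved, so it cannot be fed into a Koszul-dual dimension count. The mechanism that actually makes the relation ideal come out exactly equal to $\langle M\cup\widetilde M\rangle$ is a presentation lemma for tensor algebras of cokernels: if $E=\operatorname{coker}\bigl(\Lambda\otimes_S DK_{n-1}\otimes_S\Lambda\to\Lambda\otimes_S DK_n\otimes_S\Lambda\bigr)$, then $T_\Lambda E\cong T_S(V\oplus DK_n)/\langle M\cup R\rangle$, where $R$ is the image of the $S$-bimodule of generators $DK_{n-1}$ under the dual differential. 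Because $K_n\subseteq (V\otimes_S K_{n-1})\cap(K_{n-1}\otimes_S V)$ and $\Lambda$ is generated in degree one, that image lies in $V\otimes_S DK_n+DK_n\otimes_S V$ and is precisely $\widetilde M$, with the sign $(-1)^n$ coming from the Koszul differential; the higher-degree consequences are then automatically contained in the ideal it generates. Replacing your dimension count with this lemma turns the sketch into a correct proof.
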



\subsection{Monomial algebras}
\label{sec:mono} 
In this subsection, we define monomial algebras and describe certain minimal projective resolutions.  

\begin{definition}
Let $\Lambda = kQ/I$, where $Q$ is a finite quiver and $I$ an admissible ideal. We say that $\Lambda$ is a \emph{monomial algebra} if $I$ can be generated by a finite number of paths. 
\end{definition}

There is a nice description of the minimal projective $\Lambda$-bimodule resolution of $\Lambda$, due to Bardzell \cite{Bar97}. Let $M$ be a minimal set of paths of minimal length which generates $I$. 
Given a path $p$, define the \emph{support} to be the set of all vertices dividing $p$. For every directed path $T$ in $Q$, there is a natural order $<$ on the support of $T$.  Let $M(T)$ be the set of relations which divide $T$.

\begin{definition}
Let $p\in M(T)$. We define the \emph{left construction associated to $p$ along $T$} by induction. Let $r_2\in M(T)$ be the path (if it exists) in $M(T)$ which is minimal with respect to $t(p)<h(r_2) < h(p)$. Now assume we have constructed $r_1 = p, r_2, \ldots, r_j$. Let 
\[
	L_{j+1} = \{r\in M(T) \,\,|\,\, h(r_{j-1}) \leq t(r) < h(r_j)\}.
\]
If $L_{j+1} \not = \varnothing$, let $r_{j+1}$ be such that $t(r_{j+1})$ is minimal in $L_{j+1}$. 
\end{definition}

\begin{definition}
Let $p\in M$ and $\ell\geq 2$ be an integer. We define 
\begin{multline*}
	\ASb_p(\ell) := \{(r_1 = p, r_2, \ldots, r_{\ell-1})\,\, | \,\, (r_1, r_2, \ldots, r_{\ell-1}) \text{ is a sequence of paths associated}\\ \text{to $p$ in the left construction}\}.
\end{multline*}
For each element $(r_1, \ldots, r_{\ell-1})\in \ASb_p(\ell)$, define $p^\ell$ to be the path from $t(p)$ to $h(r_{\ell-1})$ and let $\APb_p(\ell)$ be the set of all $p^\ell$. Finally, we define 
\[
	\APb(\ell) := \bigcup_{p\in M} \APb_p(\ell),
\] 
if $\ell\geq 2$ and $\APb(0) := Q_0$, $\APb(1):= Q_1$. 
\end{definition}

The vector spaces $k\APb(\ell)$ are the $kQ_0$-bimodules which appear in the minimal resolution we want to construct. Note that $\APb(2) = M$. If $p\in \APb(\ell)$, define
\[
	\Sub(p) := \{q\in \APb(\ell-1)\,\, |\,\, q \text{ divides } p\}.
\]	

\begin{lemma}[{\cite[Lemma 3.3]{Bar97}}]
The set $\Sub(p)$ contains two paths $p_0$ and $p_1$ such that $t(p_0) = t(p)$ and $h(p_1) = h(p)$. Moreover, if $\ell$ is odd, then $\Sub(p) = \{p_0, p_1\}$. 
\end{lemma}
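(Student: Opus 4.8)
The plan is to work directly with the inductive left-construction that defines the elements of $\APb(\ell)$. Fix $p = p^{\ell}\in \APb(\ell)$ arising from a sequence $(r_1 = p, r_2,\dots, r_{\ell-1})$ associated to some relation $p\in M$ along a directed path $T$, so that $p^\ell$ is the path from $t(r_1)$ to $h(r_{\ell-1})$. First I would make precise how an element of $\APb(\ell-1)$ sits inside $p^\ell$: a divisor $q\in\APb(\ell-1)$ of $p^\ell$ corresponds to a length-$(\ell-2)$ sub-sequence $(s_1,\dots,s_{\ell-2})$ of consecutive relations, all of which divide $T$, and which themselves form a valid left-construction. The two claimed divisors are then produced explicitly: take $p_0$ to be the $\APb(\ell-1)$-path built from the truncated sequence $(r_1,\dots, r_{\ell-2})$ — this starts at $t(r_1) = t(p^\ell)$ — and take $p_1$ to be the $\APb(\ell-1)$-path built from the shifted sequence $(r_2,\dots, r_{\ell-1})$, which ends at $h(r_{\ell-1}) = h(p^\ell)$. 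The content of the first half of the lemma is exactly that these two shifted/truncated sequences are again admissible left-constructions; I would verify this by unwinding the defining conditions $h(r_{j-1})\le t(r)< h(r_j)$ and the minimality of $t(r_{j+1})$, noting that both conditions only involve \emph{consecutive} triples $r_{j-1}, r_j, r_{j+1}$, hence survive truncation from either end. It is also here that one must check $p_0\mid p^\ell$ and $p_1\mid p^\ell$ as paths in $kQ$, which follows because the full overlap sequence of the $r_j$'s lays out a single directed walk inside $T$.

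For the parity statement, suppose $\ell$ is odd and let $q\in\Sub(p^\ell)$ be arbitrary; I want to show $q\in\{p_0,p_1\}$. Write $q$ as coming from a consecutive block $(r_a, r_{a+1},\dots, r_{a+\ell-3})$ of the original sequence (one first argues that any $\APb(\ell-1)$-divisor of $p^\ell$ must be of this form, since the $r_j$ are the \emph{only} relations dividing $T$ that can appear in the left-construction and their starting points are strictly increasing, so a sub-block is determined by its left endpoint $a$). A block of length $\ell-2$ inside a sequence of length $\ell-1$ has only two choices of left endpoint, $a=1$ or $a=2$, giving exactly $p_0$ and $p_1$. The role of the parity is that it forces this block-combinatorial rigidity: for even $\ell$ one can have "internal" divisors because the overlap pattern allows a relation to be skipped, whereas odd $\ell$ forbids this. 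Concretely I expect to need the fact (implicit in Bardzell's setup and in the preceding definitions) that consecutive relations $r_j, r_{j+1}$ in a left-construction \emph{overlap} — i.e. $t(r_{j+1}) < h(r_j)$ — which is what glues the sequence into a single path with no room for an alternative divisor when the length is odd.

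The main obstacle, I expect, is not the production of $p_0$ and $p_1$ — that is essentially bookkeeping on indices — but rather the rigidity argument for odd $\ell$: one must rule out \emph{every} other $\APb(\ell-1)$-divisor, which requires knowing precisely which relations can occur in the left-construction of $p^\ell$ and in what overlap configuration. I would handle this by first establishing a structural description of $\APb(\ell)$-paths — each such path, restricted to its support in $T$, decomposes canonically as a "fan" of the overlapping relations $r_1,\dots,r_{\ell-1}$ with $t(r_1)=t(p^\ell)$, $h(r_{\ell-1})=h(p^\ell)$, and $t(r_{j+1})<h(r_j)\le h(r_{j+1})$ — and then showing any divisor in $\APb(\ell-1)$ must be a connected sub-fan of length $\ell-2$, of which, by the strict monotonicity of the tails, there are exactly two when $\ell-1$ is even. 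A secondary technical point is the well-definedness of $p_0$ and $p_1$ independent of the choice of witnessing sequence for $p^\ell$; this I would dispose of by the uniqueness of the canonical fan decomposition just mentioned.
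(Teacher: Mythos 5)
First, a point of reference: the paper does not prove this statement at all --- it is quoted verbatim from Bardzell \cite[Lemma 3.3]{Bar97} --- so your proposal can only be compared with Bardzell's own argument, whose overall shape (exhibit $p_0$ and $p_1$ from the defining sequence, then rule out further divisors when $\ell$ is odd) you have correctly identified. Within that shape, however, there are two genuine gaps. The first concerns $p_1$. Your justification that $(r_2,\dots,r_{\ell-1})$ is again an admissible left construction --- ``the conditions only involve consecutive triples, hence survive truncation from either end'' --- fails for truncation at the left end. The rule selecting the element immediately after the initial relation is a special initial rule, while the rule selecting $r_{j+1}$ for $j\ge 2$ requires $h(r_{j-1})\le t(r_{j+1})<h(r_j)$ and so explicitly references the element two steps back. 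Hence the left construction started at $r_2$ generally produces a sequence $(r_2,s_2,s_3,\dots)$ with $s_2\neq r_3$ (a relation with $t(r)<h(r_1)$ is excluded from $L_3$ in the original construction but may be eligible for the initial rule applied to $r_2$), and it is then not automatic that the associated path of $r_2$ of length $\ell-1$ ends at $h(r_{\ell-1})$. Bardzell needs a separate comparison lemma matching the heads of the two sequences to conclude that the path from $t(r_2)$ to $h(r_{\ell-1})$ really lies in $\APb(\ell-1)$; your proposal treats precisely this point as bookkeeping, and it is the main content of producing $p_1$.

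The second gap is the rigidity argument for odd $\ell$. You reduce to the claim that every $q\in\Sub(p)$ arises from a consecutive sub-block of $(r_1,\dots,r_{\ell-1})$ and then count blocks of length $\ell-2$, obtaining two. But an element of $\APb(\ell-1)$ dividing $p$ is the associated path of whatever relation it begins with, and $M(T)$ typically contains relations other than $r_1,\dots,r_{\ell-1}$; nothing in your sketch forces $q$ to be assembled from the $r_j$ at all. More tellingly, your count of sub-blocks is independent of the parity of $\ell$, so the argument, if it worked, would give $|\Sub(p)|\le 2$ for every $\ell$ --- which is exactly what the lemma does \emph{not} assert for even $\ell$. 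The parity must therefore enter through a concrete mechanism (in Bardzell's proof, via the possible positions of $t(q)$ and $h(q)$ relative to the overlaps and the minimality choices in the construction), and your remark that ``even $\ell$ allows a relation to be skipped'' restates the phenomenon rather than supplying that mechanism. Both gaps are fixable, but only by importing the structural lemmas of \cite{Bar97} that precede Lemma 3.3 there, not by the local index bookkeeping you describe.
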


We are now ready to define morphisms 
\[
	d_{\ell}: \bigoplus_{p\in \APb(\ell)}\Lambda e_{h(p)}\otimesk e_{t(p)}\Lambda \to \bigoplus_{p\in \APb(\ell-1)}\Lambda e_{h(p)}\otimesk e_{t(p)} \Lambda,
\]
noting that we give our conventions with respect to idempotents, and heads and tails of arrows and paths in the setup immediately following the introduction.
Recall that if $p\in \APb(\ell)$ and $q\in \Sub(p)$, we write $p = \Le_q(p)q\Ri_q(p)$. By the previous lemma, we have that $\Sub(p) = \{p_0, p_1\}$ if $\ell$ is odd, in which case $p = \Le_{p_0}(p) p_0$ and $p = p_1 \Ri_{p_1}(p)$. Then we define 
\[
	d_{\ell}((e_{h(p)}\otimes e_{t(p)})_p) := \begin{cases}
       (\Le_{p_0}(p)e_{h(p_0)}\otimes e_{t(p_0)})_{p_0}  - (e_{h(p_1)}\otimes e_{t(p_1)} \Ri_{p_1}(p))_{p_1} &\quad\text{if $\ell$ is odd}\\
       \sum_{q\in\Sub(p)} (\Le_q(p)e_h\otimes e_t \Ri_q(p))_q &\quad\text{if $\ell$ is even}.
     \end{cases}
\]
Here, we use the notation $(-\otimes-)_p$ to denote an element in the $p$-th component in $\bigoplus_p \Lambda e_i\otimesk e_j\Lambda$.

\begin{theorem}[{\cite[Theorem 4.1]{Bar97}}]
\label{thm:Bardzell}
The complex 
\begin{equation}
\label{eq:Bardzell}
	\cdots\xrightarrow{d_{n+1}}  \bigoplus_{p\in \APb(n)}\Lambda e_{h(p)}\otimesk e_{t(p)}\Lambda\xrightarrow{d_n}\cdots\xrightarrow{d_1} \bigoplus_{e_i\in \APb(0)}\Lambda e_i\otimesk e_i\Lambda \xrightarrow{\mu} \Lambda\to 0,
\end{equation}
where $\mu((e_i\otimes e_i)_{e_i}) = e_i$, is a minimal projective resolution of $\Lambda$ as a $\Lambda$-bimodule.
\end{theorem}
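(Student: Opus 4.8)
The statement to prove is Theorem \ref{thm:Bardzell}, i.e. that Bardzell's complex \eqref{eq:Bardzell} is a minimal projective resolution of $\Lambda$ as a $\Lambda$-bimodule. The plan is to proceed in two halves: first establish that it is a complex and that each differential has image in the radical (so the resolution is minimal, since each term is a direct sum of projective bimodules of the form $\Lambda e_{h(p)} \otimes_k e_{t(p)} \Lambda$), and second establish exactness.

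For the first half, minimality is immediate once we check $d_\ell \otimes_{\Lambda^e} (\Lambda/\mathcal J) = 0$: by the explicit formulas, each component of $d_\ell((e_{h(p)}\otimes e_{t(p)})_p)$ carries a nontrivial path factor $\mathcal L_q(p)$ or $\mathcal R_q(p)$ on one side, and these vanish modulo the arrow ideal because the lengths in $\APb(\ell)$ strictly exceed those in $\APb(\ell-1)$ along any $T$ (this is where the left-construction combinatorics, specifically the sublemma that $h(p_1)=h(p)$, $t(p_0)=t(p)$, feeds in). Checking $d_\ell d_{\ell+1} = 0$ is a direct but slightly tedious computation splitting into the cases (odd,odd), (odd,even), (even,odd), (even,even) for the parities of $\ell$ and $\ell+1$; the key mechanism is that for $p \in \APb(\ell+1)$ the two distinguished subpaths $p_0, p_1 \in \APb(\ell)$ overlap inside $p$, and when one applies $d_\ell$ to each, the overlapping middle terms cancel in pairs because of the signs chosen in the odd case. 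I would organize this by tracking, for a fixed $p$, which elements of $\APb(\ell-1)$ can appear in $d_\ell d_{\ell+1}((\cdots)_p)$ and showing each appears with coefficient zero.

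For exactness, the standard and cleanest route is to tensor with the simple modules and compute homology there, or equivalently to argue that the complex computes $\Tor^{\Lambda^e}(\Lambda,\Lambda)$ and compare ranks; but Bardzell's original approach is more hands-on. I would instead apply $k \otimes_\Lambda - \otimes_\Lambda k$ (i.e., pass to $e_i (\text{complex}) e_j$ and reduce mod radical on both sides) to get, for each pair of vertices $(i,j)$, a complex of $k$-vector spaces whose $\ell$-th term has dimension equal to the number of elements of $\APb(\ell)$ from $i$ to $j$; then show this complex is exact except in degree $0$ where $i=j$. Alternatively — and this is probably how I would actually write it — construct an explicit contracting homotopy on \eqref{eq:Bardzell} viewed as a complex of left $\Lambda$-modules (forgetting the right structure and using that each $\Lambda e_{h(p)} \otimes_k e_{t(p)}\Lambda$ is a free right module), by sending a generator to the appropriate "longest prefix lying in $\APb(\ell+1)$" term; the verification that this is a homotopy is again a case analysis on parities governed by the left-construction recursion.

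The main obstacle, and the heart of Bardzell's theorem, is the exactness argument: one must show that every relation among the generators of $\ker d_{\ell-1}$ is accounted for by $\APb(\ell)$, which amounts to proving that the left construction really does enumerate a minimal generating set of syzygies. This requires careful bookkeeping of how overlaps of relations propagate: given a "syzygy path" supported on a directed segment $T$, one peels off relations greedily from the left exactly as in the definition of $L_{j+1}$, and must check that minimality of each $r_{j+1}$ forces the syzygy to factor through $d_\ell$. I would isolate this as a combinatorial lemma about $M(T)$ — that any minimal overlapping chain of relations starting at a given relation is produced by the left construction — and then the exactness of \eqref{eq:Bardzell} in each degree follows by induction on $\ell$, with the base cases $\ell = 0, 1, 2$ (where $\APb(0) = Q_0$, $\APb(1) = Q_1$, $\APb(2) = M$) being the familiar start of the minimal bimodule resolution of any algebra given by quiver and relations. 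Since the statement is quoted from \cite{Bar97}, in the paper itself I would simply cite it; the sketch above is what a self-contained proof would look like.
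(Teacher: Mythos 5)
The paper does not prove this statement at all: it is imported verbatim from Bardzell \cite[Theorem 4.1]{Bar97} and used as a black box, so there is no in-paper argument to compare against, and your closing remark that one would simply cite it is exactly what the authors do. As for your sketch of a self-contained proof, the architecture is consistent with the standard treatments. Minimality is indeed immediate from the observation that for $q\in\Sub(p)$ at least one of $\Le_q(p)$, $\Ri_q(p)$ is a path of positive length, so the image of $d_\ell$ lies in the radical of the target bimodule; and the reduction of exactness to the one-sided case by applying $-\otimes_\Lambda S_j$ for every simple is legitimate (each term is projective as a right module, and exactness after tensoring with every simple gives exactness after tensoring with $\Lambda$ by induction on composition series), turning the theorem into the statement that the $\APb$-complex restricts to the known minimal one-sided resolution of each simple over a monomial algebra.

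Two caveats. First, ``compare ranks'' cannot by itself establish exactness: an Euler-characteristic count does not detect homology in intermediate degrees, so you must actually run either the contracting-homotopy construction or the syzygy induction. Second, essentially all of the content of the theorem is concentrated in the combinatorial step you postpone, namely that the left construction enumerates precisely a minimal generating set of each successive syzygy (equivalently, that $\ker d_{\ell-1}$ is generated by the images of the $\APb(\ell)$-generators), together with the overlap/sign cancellations needed for $d_\ell d_{\ell+1}=0$. Your outline names these obstacles correctly but does not discharge them, so as a standalone proof it is incomplete; as a description of what the cited result contains and how it is proved, it is accurate and does not conflict with anything in the paper.
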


\subsection{Computing $\Ext_{\Lambda^e}^{\ell}(\Lambda, \Lambda^e)$} 
\label{sec:comp}

In the next sections, we use on many occasions Corollary \ref{cor:vosnex} as an obstruction for certain algebras to be $n$-hereditary. 
We therefore explain here how to compute $\Ext_{\Lambda^e}^{\ell}(\Lambda, \Lambda^e)$ for $1\leq \ell\leq n$.

Let $\Lambda$ be a basic finite-dimensional algebra. By \cite[Section 1.5]{Hap89}, $\Lambda$ has a minimal projective bimodule resolution of the form 
\[
	P_{\bullet}: \cdots\xrightarrow{d_{n+1}}  \bigoplus_{p\in \mathcal B(E^{n}(i,j))}\Lambda e_{h(p)}\otimesk e_{t(p)}\Lambda\xrightarrow{d_n}\cdots\xrightarrow{d_1} \bigoplus_{e_i\in \mathcal B(E^0(i,j))}\Lambda e_i\otimesk e_i\Lambda \to 0,
\]
where $E^{\ell}(i,j): = \Ext^{\ell}_{\Lambda}(S_i, S_j)$ and $S_i$ denotes the simple module at vertex $i$. In the case where $\Lambda$ is monomial, we have $E^{\ell}(i,j)\cong e_jk\APb(\ell)e_i$. Note that, in general, it is hard to determine the differentials $d_{\ell}$. 

In order to compute $\Ext_{\Lambda^e}^{\ell}(\Lambda, \Lambda^e)$, we apply $\Hom_{\Lambda^e}(-, \Lambda^e)$ to $P_{\bullet}$ and use the isomorphisms 
\[
\begin{array}{ccccc}
	\Psi: \Hom_{\Lambda^e}(\Lambda e_j\otimesk e_i\Lambda, \Lambda^e) &\cong& e_j \Lambda\otimesk \Lambda e_i & \cong& \Lambda e_i\otimesk e_j\Lambda \\
	\phi & \mapsto& \phi(e_j\otimes e_i)  && \\
	&& e_j\otimes e_i &\mapsto & e_i\otimes e_j 
\end{array}
\]
to obtain a complex 
\begin{equation}
\label{eq:complex}
	\Hom_{\Lambda^e}(P_{\bullet}, \Lambda^e): 0\to \bigoplus_{e_i\in \mathcal B(E^0(i,j))}\Lambda e_i\otimesk e_i\Lambda \xrightarrow{\tilde d_1} \cdots\xrightarrow{\tilde d_n}\bigoplus_{p\in \mathcal B(E^{n}(i,j))}\Lambda e_{t(p)}\otimesk e_{h(p)}\Lambda\to\cdots,
\end{equation}
where $\tilde d_{\ell}(e_i\otimes e_j) = \Psi(\Psi^{-1}(e_i\otimes e_j)\circ d_{\ell})$.

Computing $\Ext_{\Lambda^e}^{\ell}(\Lambda, \Lambda^e)$ requires the understanding of the morphisms $\tilde d_{\ell}$, which we do have in the case where $\Lambda$ is monomial. In fact, we have    
\[
	\tilde d_{\ell}((e_{t(p)}\otimes e_{h(p)})_p) = \begin{cases}
       \sum_{q\in \APb(\ell)} (e_{t(q)}\otimes e_{h(q)} \delta^{\mathcal R}_p (q))_{q} -\sum_{q\in \APb(\ell)}  (\delta^{\mathcal L}_p (q) e_{t(q)}\otimes e_{h(q)})_{q} &\,\,\text{if $\ell$ is odd}\\
       \sum_{q\in \APb(\ell) \,|\, p\in\Sub(q)} (\Ri_p(q)e_{t(q)}\otimes e_{h(q)} \Le_p(q))_q &\,\,\text{if $\ell$ is even}.
     \end{cases}
\]

In further sections, we use these to describe cocycles and coboundaries, allowing us to show that some  $\Ext_{\Lambda^e}^{\ell}(\Lambda, \Lambda^e)$ does not vanish for some algebra, thus preventing them from being $n$-hereditary. Using Corollary \ref{cor:vosnex}, we can already give a necessary condition for a monomial algebra to be $n$-hereditary. This is analogous to results established in \cite[Proof of Theorem 3.14]{GI19} and \cite[Proof of Theorem 3.6]{Thi20} in the case where $\Lambda$ is Koszul.

\begin{lemma}
\label{lem:deltaK_n=K_{n-1}}
Let $\Lambda$ be a monomial algebra and define
\[
	\delta(E(i,j)^{\ell}):= \{w\in E(i,j)^{\ell-1}\,\, |\,\, w\in \Sub(w')\text{ for some } w'\in E(i,j)^{\ell}\}.
\]
Then $E(i,j)^{\ell-1}= \delta(E(i,j)^{\ell})$ for all $2\leq \ell \leq n$.
\end{lemma}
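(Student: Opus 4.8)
The claim is that for a monomial algebra $\Lambda = kQ/I$ of global dimension $n$, every path appearing in the $(\ell-1)$-st term of Bardzell's resolution, i.e. every $w \in \APb(\ell-1)$ with $e_j w e_i \neq 0$, must divide some path $w'$ in $\APb(\ell)$. I would prove the contrapositive statement in a strong form: if $w \in \APb(\ell-1)$ is \emph{not} in $\delta(E(i,j)^\ell)$, then $\Ext^{\ell-1}_{\Lambda^e}(\Lambda, \Lambda^e) \neq 0$, contradicting Corollary \ref{cor:vosnex} since $1 \leq \ell - 1 < n$. The point is that $w$ then gives rise to a nonzero cocycle in the complex \eqref{eq:complex} which cannot be a coboundary.

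First I would set up the bookkeeping. Using the explicit formula for $\tilde d_\ell$ recalled in Section \ref{sec:comp}, fix $w \in \APb(\ell-1)$ and consider the basis element $(e_{t(w)} \otimes e_{h(w)})_w$ in the degree-$(\ell-1)$ term of \eqref{eq:complex}. If $w \notin \delta(E(i,j)^\ell)$, then by definition $w$ divides no element of $\APb(\ell)$. Examining the formula for $\tilde d_{\ell}$ in the even case — which is a sum over $q \in \APb(\ell)$ with $w \in \Sub(q)$, hence over $q$ divisible by $w$ — we get $\tilde d_{\ell}((e_{t(w)} \otimes e_{h(w)})_w) = 0$; in the odd case one argues similarly that the relevant summands $\delta^{\mathcal R}_w(q)$ and $\delta^{\mathcal L}_w(q)$ all vanish because $w \nmid q$ for every $q$ (note $w$ divides $q$ iff $w$ appears as $\delta^{\mathcal R}_w(q)$ gives something nonzero, etc.). So $(e_{t(w)} \otimes e_{h(w)})_w$ is a cocycle. (One should be a little careful: the formula distinguishes $\delta^{\mathcal{R}}_p(q)$, which is nonzero when $q = w \otimes a$, i.e. $w$ a ``prefix'' of $q$, from the dividing condition; but since $\APb(\ell)$ consists of paths genuinely extending relation-overlap chains, $w \in \APb(\ell-1)$ failing to divide any $q \in \APb(\ell)$ does kill all terms. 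This case-check is the routine part.)

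Next I would show this cocycle is not a coboundary. The image of $\tilde d_{\ell-1}$ lands in the degree-$(\ell-1)$ term and is spanned by elements built from paths in $\APb(\ell-2)$; by minimality of Bardzell's resolution (Theorem \ref{thm:Bardzell}), the differentials $\tilde d_{\ell-1}$ have image in $\mathcal{J}\cdot(\bigoplus \Lambda e \otimes e\Lambda) + (\bigoplus \Lambda e \otimes e\Lambda)\cdot \mathcal{J}$, i.e. all coefficients lie in the radical — the element $(e_{t(w)}\otimes e_{h(w)})_w$, being the ``trivial'' generator $e_{t(w)}\otimes e_{h(w)}$ in its component, is not of this form. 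Hence $(e_{t(w)} \otimes e_{h(w)})_w$ represents a nonzero class, and in particular the component of $\Ext^{\ell-1}_{\Lambda^e}(\Lambda, \Lambda^e)$ at the pair $(h(w), t(w))$ is nonzero. This contradicts \eqref{eq:vosnex}. Therefore every $w \in E(i,j)^{\ell-1}$ lies in $\delta(E(i,j)^\ell)$, giving the inclusion $E(i,j)^{\ell-1} \subseteq \delta(E(i,j)^\ell)$; the reverse inclusion is immediate from the definition of $\delta$, so equality holds for all $2 \leq \ell \leq n$.

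\textbf{Main obstacle.} The conceptual steps are clean, so the real work is the careful case analysis verifying that a non-dividing $w$ yields a genuine cocycle when one unwinds the odd/even formulas for $\tilde d_\ell$ — in particular matching up ``$w$ divides $q$'' with the combinatorial conditions $w \in \Sub(q)$, $\delta^{\mathcal L}_w(q) \neq 0$, $\delta^{\mathcal R}_w(q) \neq 0$ coming from Bardzell's construction, and checking these are exhaustive. I would also want to double-check that minimality of the resolution is being invoked correctly to rule out coboundaries; alternatively one can argue directly that $(e_{t(w)}\otimes e_{h(w)})_w$ is not in the image by a degree/length argument on the paths occurring, comparing $L(w)$ with the lengths of paths in $\APb(\ell-2)$.
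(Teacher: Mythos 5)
Your proposal is correct and follows essentially the same route as the paper: both argue by contradiction that a path $w\in\APb(\ell-1)$ dividing no element of $\APb(\ell)$ makes $(e_{t(w)}\otimes e_{h(w)})_w$ a cocycle that cannot be a coboundary, contradicting Corollary \ref{cor:vosnex}. The paper states the cocycle and non-coboundary claims without elaboration, whereas you supply the justifications (the vanishing of all odd/even summands of $\tilde d_\ell$, and minimality of Bardzell's resolution forcing the image of $\tilde d_{\ell-1}$ into the radical), which is exactly the right way to fill those in.
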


\begin{proof}
Suppose by contradiction that there exists $w\in E(i,j)^{\ell-1}$ which does not divide any element of $E(i,j)^{\ell}$, for some $\ell$. Then 
\[
	\tilde d_{\ell}((e_{t(w)}\otimes e_{h(w)})_w) = 0,
\]
which means that $(e_{t(w)}\otimes e_{h(w)})_w$ is an $(\ell-1)$-cocycle in $\Hom_{\Lambda^e}(P_{\bullet}, \Lambda^e)$. However, it is not a coboundary, implying that $\Ext^{\ell-1}_{\Lambda^e}(\Lambda, \Lambda^e)\not =0$. This contradicts Corollary \ref{cor:vosnex}.
\end{proof}

\begin{corollary}
\label{cor:arrow_in_rel}
Let $\Lambda = kQ/\langle M\rangle$ be a basic $n$-hereditary monomial algebra where $M$ is a set of relations given by paths in $kQ$. Then every arrow in $Q_1$ is part of a relation in $M$. 
\end{corollary}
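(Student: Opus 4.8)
The plan is to derive this directly from Lemma \ref{lem:deltaK_n=K_{n-1}} applied in the lowest degree, namely $\ell = 2$. First I would recall that $\APb(1) = Q_1$ by definition and $\APb(2) = M$, so that $E(i,j)^1 \cong e_j k Q_1 e_i$ is spanned by the arrows from $i$ to $j$, while $E(i,j)^2 \cong e_j k M e_i$ is spanned by the relations from $i$ to $j$. The lemma with $\ell = 2$ gives $E(i,j)^1 = \delta(E(i,j)^2)$, which says precisely that every arrow $a \colon i \to j$ lies in $\Sub(p)$ for some relation $p \in M$ with $t(p) = i$, $h(p) = j$; unwinding the definition of $\Sub$ (and of ``divides''), this means $a$ occurs as a subpath of $p$, i.e. $a$ is part of the relation $p$.

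The one point requiring a word of care is the hypothesis of Lemma \ref{lem:deltaK_n=K_{n-1}}: it requires $2 \le \ell \le n$, so I should note that the statement is only non-vacuous (and the argument only applies) when $n = \gldim\Lambda \ge 2$. Since $\Lambda = kQ/\langle M\rangle$ is admissible with $M$ a nonempty set of paths of length $\ge 2$ (an $n$-hereditary algebra of global dimension $n\ge 1$ that is not hereditary), we do have $n \ge 2$, so $\ell = 2$ is an admissible choice. I would also remark that Lemma \ref{lem:deltaK_n=K_{n-1}} itself uses that $\Lambda$ is $n$-hereditary only through Corollary \ref{cor:vosnex}, so no further hypotheses are needed here.

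I do not expect any real obstacle: the corollary is essentially a restatement of the $\ell = 2$ instance of the preceding lemma once one translates $\APb(1)$, $\APb(2)$ and $\Sub$ into the language of arrows and relations. The only thing to get right is making the identifications $E(i,j)^1 \leftrightarrow$ arrows and $E(i,j)^2 \leftrightarrow$ relations explicit, and observing that an arrow dividing a path means literally appearing in that path.

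Concretely, the proof would read roughly as follows.

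\begin{proof}
Since $\Lambda$ is $n$-hereditary and monomial with admissible relation ideal generated by the nonempty set $M$ of paths of length at least $2$, we have $n = \gldim\Lambda \ge 2$. Apply Lemma \ref{lem:deltaK_n=K_{n-1}} with $\ell = 2$: for all vertices $i, j$ we obtain $E(i,j)^1 = \delta(E(i,j)^2)$. Now $\APb(1) = Q_1$ and $\APb(2) = M$, so $e_j k\APb(1) e_i \cong E(i,j)^1$ is spanned by the arrows $i \to j$ and $e_j k\APb(2) e_i \cong E(i,j)^2$ is spanned by the relations in $M$ from $i$ to $j$. Hence every arrow $a \colon i \to j$ lies in $\Sub(p)$ for some $p \in M$, which by definition of $\Sub$ means that $a$ divides $p$, i.e. $a$ occurs as a subpath of the relation $p$. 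Thus every arrow of $Q$ is part of a relation in $M$.
\end{proof}
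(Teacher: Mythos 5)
Your proof is correct and takes exactly the same route as the paper, which simply invokes Lemma \ref{lem:deltaK_n=K_{n-1}} (the $\ell=2$ instance, i.e.\ $\delta(M)=V$); your version just makes the identifications $\APb(1)=Q_1$, $\APb(2)=M$ and the degree hypothesis $n\ge 2$ explicit.
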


\begin{proof}
By Lemma \ref{lem:deltaK_n=K_{n-1}}, we have that $\delta(M) = V$. 
\end{proof}

\section{Classification of $n$-hereditary truncated path algebras}
\label{sec:trunc}

In this section, we assume that $\Lambda = kQ/I$ is a monomial finite-dimensional algebra, that is, the ideal $I$ in a presentation of $\Lambda$ can be chosen to be generated by paths. Moreover, for the rest of the text, whenever $\Lambda$ is assumed to be monomial, we also assume $I = \langle M \rangle$ with $M$ a minimal set of paths of minimal length. 

Recall that, by the vanishing-of-$\Ext$ condition, we have that $\Ext_{\Lambda^e}^i(\Lambda, \Lambda^e) = 0$ for all $0<i<n$ for any $n$-hereditary algebra $\Lambda$. We thus seek to understand what knowledge one can obtain from this property. As an application, we use this information to classify the truncated path algebras $\Lambda =kQ/\mathcal J^{\ell}$, where $\mathcal J$ is the arrow ideal, which are $n$-hereditary in the second subsection.


\subsection{Vanishing-of-$\Ext$ condition for monomial algebras}
 In this subsection, we find necessary conditions on the quiver and relations of monomial path algebras in order to satisfy the vanishing-of-$\Ext$ condition. To be more precise, we only look into the vanishing of the first $\Ext$. Recall that, by Lemma \ref{lem:deltaK_n=K_{n-1}}, every arrow has to be part of at least one relation, otherwise $\Ext_{\Lambda^e}^1(\Lambda, \Lambda^e)\not = 0$. This is a first obstruction, which does not require the monomial hypothesis. We therefore assume this property for the class of algebras we consider in this subsection. Throughout, we let $\Lambda$ be a monomial algebra in which every arrow divides at least one relation. 

The main strategy is to construct cocycle elements which are not coboundaries in the complex (\ref{eq:complex}), defined as $\Hom_{\Lambda^e}(P_{\bullet}, \Lambda^e)$, where $P_{\bullet}$ is the minimal projective $\Lambda$-bimodule resolution of $\Lambda$, described in the preliminaries. We refer to Sections \ref{sec:mono} and \ref{sec:comp} for more details and the notation.

\begin{proposition}
\label{prop:arrow_start_end}
Suppose that there exists an arrow $a$ which is the start (resp.\ the end) of every relation it divides and such that $t(a)$ (resp. $h(a)$) is not a source (resp.\ a sink). Then $\Ext^1_{\Lambda^e}(\Lambda, \Lambda^e)\not = 0$. 
\end{proposition}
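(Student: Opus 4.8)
The goal is to exhibit an element of $\bigoplus_{p\in\APb(1)}\Lambda e_{t(p)}\otimesk e_{h(p)}\Lambda$ (i.e.\ a degree-$1$ cochain in the complex $\Hom_{\Lambda^e}(P_\bullet,\Lambda^e)$ of Section \ref{sec:comp}) which is a $1$-cocycle but not a $1$-coboundary, so that $\Ext^1_{\Lambda^e}(\Lambda,\Lambda^e)\neq 0$. The natural candidate associated to the arrow $a\in Q_1=\APb(1)$ is $x:=(e_{t(a)}\otimes e_{h(a)})_a$. I would treat the ``start'' case (the ``end'' case being dual, exchanging left and right constructions, $\delta^{\mathcal L}$ and $\delta^{\mathcal R}$, etc., and I would just remark this at the end).

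\textbf{Step 1: $x$ is a cocycle.} Since $\ell=2$ is even, the formula for $\tilde d_2$ in Section \ref{sec:comp} gives
\[
\tilde d_2\bigl((e_{t(a)}\otimes e_{h(a)})_a\bigr)=\sum_{q\in\APb(2)\,:\,a\in\Sub(q)} (\Ri_a(q)e_{t(q)}\otimes e_{h(q)}\Le_a(q))_q .
\]
Here $\APb(2)=M$ and $a\in\Sub(q)$ means the arrow $a$ divides the relation $q$. By hypothesis $a$ is the \emph{start} of every relation it divides, so for each such $q$ we have $q=\Le_a(q)\,a$ with $\Ri_a(q)=e_{t(a)}$ a trivial path; hence $\Le_a(q)=e_{h(q)}\Le_a(q)$ and, crucially, $\Le_a(q)$ has positive length (a relation has length $\ge 2$), so $\Le_a(q)\in\mathcal J$ and thus $e_{h(q)}\otimes e_{h(q)}\Le_a(q)=0$ in $\Lambda e_{h(q)}\otimesk e_{h(q)}\Lambda$ — wait, more carefully: the term is $(\Ri_a(q)e_{t(q)}\otimes e_{h(q)}\Le_a(q))_q=(e_{t(a)}\otimes e_{h(q)}\Le_a(q))_q$, and I need to argue this vanishes. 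This is where I must be a little careful: it vanishes precisely because $\Le_a(q)$, being a proper subpath of a minimal relation, is a nonzero element of $\Lambda$, so actually the term does \emph{not} obviously vanish. The correct argument: every term in $\tilde d_2(x)$ lies in a summand indexed by a relation $q$, and in that summand the element $e_{t(a)}\otimes e_{h(q)}\Le_a(q)$ has its second tensor factor a nonzero path of positive length; I will need to compare against coboundaries $\tilde d_1$ directly rather than claim $\tilde d_2(x)=0$. So the honest plan is: compute $\tilde d_2(x)$ explicitly, and separately compute the image of $\tilde d_1$, and show $\tilde d_2(x)$ is nonzero but, if it were zero, we would be done, and otherwise we must choose a better cocycle. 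Let me restate the plan so it is right.

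\textbf{Revised plan.} The real input is that $t(a)$ is not a source: pick an arrow $b$ with $h(b)=t(a)$. Consider the degree-$1$ cochain $y:=(e_{t(b)}\otimes e_{h(b)})_b=(e_{t(b)}\otimes e_{t(a)})_b$ associated to $b$. I claim a suitable $k$-linear combination of such $y$'s (one for each arrow ending at $t(a)$), or $x$ itself, is a cocycle that is not a coboundary; the mechanism is that $a$ appearing at the \emph{start} of all its relations forces the relevant component of $\tilde d_2$ to miss $x$, while $\tilde d_1$ (which only produces elements supported on paths through vertices, using the odd formula $\tilde d_1((e_{t(p)}\otimes e_{h(p)})_p)=\sum_q(e_{t(q)}\otimes e_{h(q)}\delta^{\mathcal R}_p(q))_q-\sum_q(\delta^{\mathcal L}_p(q)e_{t(q)}\otimes e_{h(q)})_q$) cannot hit $x$ in the needed component because that would require a relation $q$ with $a=\delta^{\mathcal L}_p(q)$ for a vertex-idempotent $p$, i.e.\ $q=a$, impossible since relations have length $\ge 2$, combined with the fact that $a$ is not at the \emph{end} — here is exactly where ``start of every relation'' and ``$t(a)$ not a source'' interact: the obstruction to $x$ being a coboundary comes from the $\delta^{\mathcal R}$-terms involving an arrow $b$ into $t(a)$, and ``$a$ is always a start'' guarantees no relation begins with $ba$ cancelling it.

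\textbf{Step 2: not a coboundary (the main obstacle).} This is the crux. I would set up the pairing/degree bookkeeping: grade everything by path length, observe $x$ is concentrated in the lowest relevant degree, enumerate the possible coboundary terms landing in the $a$-summand, and show the ``start of every relation'' hypothesis kills the term that would be needed (a relation of the form $a\cdot(\text{something})$ read from the wrong side), while ``$t(a)$ not a source'' guarantees the cochain space one degree down is nonzero so that the cocycle is genuinely not forced to be a coboundary. Concretely I expect to show: $\ker\tilde d_2$ in the $a$-summand contains a nonzero class, and $\operatorname{im}\tilde d_1$ restricted there is a proper subspace, by exhibiting one explicit element of the former outside the latter using $b$. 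The bulk of the work — and the part I'd expect to be fiddly — is checking that no combination of $\tilde d_1$ of the generators $(e_{t(p)}\otimes e_{h(p)})_p$, $p\in\APb(0)\cup$ adjustments, produces the target class; this reduces to a finite combinatorial check on relations touching $t(a)$, using that $a$ starts each relation it divides so the ``left'' contributions $\delta^{\mathcal L}_p(q)$ never equal the needed path, and that $h(a)$-side is unconstrained so a surviving term persists. I would then conclude $\Ext^1_{\Lambda^e}(\Lambda,\Lambda^e)\neq 0$, and note the sink/end case follows by the evident left--right symmetry of Bardzell's resolution and the formulas in Section \ref{sec:comp}.
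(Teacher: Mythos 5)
There is a genuine gap: you never actually produce a cocycle. Your first candidate $(e_{t(a)}\otimes e_{h(a)})_a$ is, as you yourself observe, not killed by $\tilde d_2$ (the terms $(e_{t(a)}\otimes e_{h(q)}\Le_a(q))_q$ survive because $\Le_a(q)$ is a \emph{proper} subpath of a minimal relation and hence nonzero in $\Lambda$), and your ``revised plan'' replaces it with an unspecified ``suitable $k$-linear combination'' of cochains attached to arrows $b$ into $t(a)$, without verifying that any such combination is a cocycle or identifying which one. That is not a proof; the entire content of the proposition is the choice of the right element, and that choice is missing.

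The idea you are missing is to put the arrow $a$ itself into the tensor, not just the idempotents. In your ``start'' case the correct element is $(e_{t(a)}\otimesk e_{h(a)}a)_a$ (the paper works with the dual ``end'' case and uses $(ae_{t(a)}\otimesk e_{h(a)})_a$). Since $\tilde d_2$ is a bimodule map for the outer structure, one gets
\[
\tilde d_2\bigl((e_{t(a)}\otimesk e_{h(a)}a)_a\bigr)=\sum_{q\,:\,a\in\Sub(q)}\bigl(e_{t(a)}\otimesk \Le_a(q)\,a\bigr)_q=\sum_q\bigl(e_{t(a)}\otimesk q\bigr)_q=0,
\]
because $a$ being the \emph{start} of every relation $q$ it divides forces $\Le_a(q)\,a=q$, which is zero in $\Lambda$. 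This is exactly where the first hypothesis enters, and it is invisible in your setup. The non-coboundary step then goes as you vaguely anticipated, but with this element: the only generator of $\bigoplus_i\Lambda e_i\otimesk e_i\Lambda$ whose image under $\tilde d_1$ involves the $a$-component term $(e_{t(a)}\otimesk a)_a$ is $(e_{t(a)}\otimesk e_{t(a)})_{e_{t(a)}}$, and its image also contains $(be_{t(b)}\otimesk e_{t(a)})_b$ for an arrow $b$ with $h(b)=t(a)$ (which exists precisely because $t(a)$ is not a source); that term occurs in the image of no other generator, so it cannot be cancelled and the cocycle is not a coboundary. Without the multiplication by $a$, neither half of this argument can be run, so the proposal as written does not establish the statement.
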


\begin{proof}
Assume that there is an arrow $a$ which is the end of every relation $r_i$ it divides and such that $h(a)$ is not a sink. The other case is dual. We consider the element 
\[
	(ae_{t(a)}\otimesk e_{h(a)})_a \in \bigoplus_{v\in Q_1}\Lambda e_{t(v)}\otimesk e_{h(v)}\Lambda
\]
from complex (\ref{eq:complex}). Then 
\[
	\tilde d_2((ae_{t(a)}\otimesk e_{h(a)})_a) = \sum_i (a\mathcal R_1(r_i)e_{t(r_i)}\otimesk e_{h(a)})_{r_i} = 0,
\]
so it is a cocycle in complex (\ref{eq:complex}). However, since $h(a)$ is not a sink, $(ae_{t(a)}\otimesk e_{h(a)})_a$ cannot be a coboundary. In fact, let $b$ be an arrow such that $h(a) = t(b)$. Then, 
\[
	\tilde d_1((e_{h(a)}\otimesk e_{h(a)})_{e_{h(a)}} = (ae_{t(a)}\otimesk e_{h(a)})_a + (e_{h(a)}\otimesk e_{h(b)}b)_b + \ldots
\]
This is the only place where $(ae_{t(a)}\otimesk e_{h(a)})_a$ appears as a summand of an element in the image of $\tilde d_1$. The same is true for $(e_{h(a)}\otimesk e_{h(b)}b)_b$, which means that this term cannot be cancelled by other elements in the image of $\tilde d_1$. Therefore, $(ae_{t(a)}\otimesk e_{h(a)})_a$ is not a coboundary and $\Ext^1_{\Lambda^e}(\Lambda, \Lambda^e)\not = 0$.
\end{proof}
  
 We say that two relations \emph{intersect} with each other if there is at least one arrow which divides both of them. We have the following corollary. 
 
 \begin{corollary}
 Assume that there is a relation $r$ which does not intersect with any other relation and such that $t(r)$ and $h(r)$ are not both a source and a sink. Then $\Ext^1_{\Lambda^e}(\Lambda, \Lambda^e)\not = 0$. 
 \end{corollary}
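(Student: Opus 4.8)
The idea is to reduce the corollary directly to Proposition \ref{prop:arrow_start_end}. Since the relation $r$ does not intersect any other relation, every arrow dividing $r$ divides \emph{only} $r$ among all relations. The hypothesis says that $t(r)$ and $h(r)$ are not both simultaneously a source and a sink — equivalently, at least one of $t(r)$ is not a source or $h(r)$ is not a sink.

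First I would handle the case where $h(r)$ is not a sink. Let $a$ be the last arrow of $r$, i.e.\ write $r = a p$ for some path $p$ with $h(a) = h(r)$. Every relation that $a$ divides is $r$ itself (because $r$ does not intersect any other relation, so no other relation shares an arrow with $r$, in particular not $a$), and $a$ is the end of $r$. Hence $a$ is an arrow which is the end of every relation it divides, and $h(a) = h(r)$ is not a sink. By Proposition \ref{prop:arrow_start_end} (the case treated explicitly there), $\Ext^1_{\Lambda^e}(\Lambda, \Lambda^e) \neq 0$.

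Next I would handle the case where $t(r)$ is not a source. Here let $a$ be the first arrow of $r$, so $r = q a$ with $t(a) = t(r)$. As before, the only relation $a$ divides is $r$, and $a$ is the start of $r$. So $a$ is an arrow which is the start of every relation it divides, and $t(a) = t(r)$ is not a source. Applying the dual statement of Proposition \ref{prop:arrow_start_end} gives $\Ext^1_{\Lambda^e}(\Lambda, \Lambda^e) \neq 0$. Since the hypothesis guarantees at least one of these two cases occurs, the corollary follows.

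The only subtlety — and it is minor — is checking that when $r$ has length $\geq 2$ the first and last arrows of $r$ genuinely divide no relation other than $r$: this is exactly what "does not intersect with any other relation" means, so there is really no obstacle here. One should also note the degenerate possibility that $r$ is a single arrow, in which case that arrow is both the start and the end of $r$ and the argument still applies verbatim with $a = r$. I do not expect any serious difficulty; the content is entirely in Proposition \ref{prop:arrow_start_end}, and this corollary is just the observation that a non-intersecting relation supplies an arrow meeting its hypotheses at whichever endpoint fails to be a source/sink.
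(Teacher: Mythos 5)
Your argument is correct and is exactly the reduction the paper intends (the paper states this as an immediate corollary of Proposition \ref{prop:arrow_start_end} without writing out a proof): the first and last arrows of a non-intersecting relation divide no other relation, so whichever endpoint fails to be a source/sink feeds directly into the proposition or its dual. The only superfluous remark is the ``single arrow'' case, which cannot occur since relations in an admissible ideal have length at least $2$.
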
 

Continuing on the same ideas, we explore what happens at sinks and sources. We show that the vanishing-of-Ext conditions implies that sinks and sources divide only one arrow.

\begin{proposition}
\label{prop:nosinksource}
Assume that there is a vertex $i$ in $Q$ which is a sink (resp. a source), such that there is at least two arrows having $i$ as head (resp. as tail). Then $\Ext^1_{\Lambda^e}(\Lambda, \Lambda^e)\not = 0$.
\end{proposition}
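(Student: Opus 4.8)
The plan is to mimic the structure of the proof of Proposition \ref{prop:arrow_start_end}: construct an explicit $1$-cocycle in the complex \eqref{eq:complex} which is visibly not a $1$-coboundary, thereby forcing $\Ext^1_{\Lambda^e}(\Lambda,\Lambda^e)\neq 0$. Suppose $i$ is a sink with (at least) two distinct arrows $a,b$ having head $i$; the source case is dual (take the transpose/opposite algebra, or run the argument with tails and $\Ri$ in place of heads and $\Le$). Recall that, since every arrow divides at least one relation and $\Lambda$ is monomial with $M$ a set of minimal paths of minimal length, any relation $r$ with $a\mid r$ must have $a$ as its \emph{last} (leftmost) arrow: indeed, if $r = b' a c$ were a relation with $a$ strictly inside, then $a$ continues to some arrow, contradicting that $i=h(a)$ is a sink. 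So $a$ is the end of every relation it divides; likewise for $b$.

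First I would single out the element $(a e_{t(a)}\otimesk e_{h(a)})_a \in \bigoplus_{v\in Q_1}\Lambda e_{t(v)}\otimesk e_{h(v)}\Lambda$, the degree-$1$ term of \eqref{eq:complex}. Applying $\tilde d_2$ and using the even-degree formula, $\tilde d_2((a e_{t(a)}\otimesk e_{h(a)})_a) = \sum_{r\in M\,|\,a\in\Sub(r)} (\mathcal R_a(r) e_{t(r)}\otimesk e_{h(r)} \mathcal L_a(r))_r$, and since $a$ is the last arrow of each such $r$ we have $\mathcal L_a(r)=e_{h(a)}=e_i$, so each summand is $(\mathcal R_a(r) e_{t(r)}\otimesk e_i)_r$; one then checks that for this to be a cocycle we need these contributions to vanish — which they do because $h(a)=i$ being a sink forces $a$ to sit at the extreme left of $r$, meaning $a\notin\Sub(r)$ in the sense relevant to the even differential's summation over proper sub-paths, or more carefully: the terms produced all lie in components where $e_i$ appears on the right, and the analogous computation starting from $(b e_{t(b)}\otimesk e_i)_b$ produces a disjoint set of such terms, so in fact I would instead work with the difference $\eta := (a e_{t(a)}\otimesk e_i)_a - (b e_{t(b)}\otimesk e_i)_b$ (or just $(a e_{t(a)}\otimesk e_i)_a$ itself), verifying directly from the explicit formula for $\tilde d_2$ that $\tilde d_2(\eta)=0$ because $i$ is a sink.

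Next I would show $\eta$ is not a $1$-coboundary. The only generator of the degree-$0$ term whose image under $\tilde d_1$ can involve the $a$-component or the $b$-component is $(e_i\otimesk e_i)_{e_i}$, and by the odd-degree formula $\tilde d_1((e_i\otimesk e_i)_{e_i}) = \sum_{v\in Q_1,\,h(v)=i}(v e_{t(v)}\otimesk e_i)_v \;-\; \sum_{v\in Q_1,\,t(v)=i}(e_i\otimesk e_{h(v)} v)_v$. Since $i$ is a sink, the second sum is empty, so $\tilde d_1((e_i\otimesk e_i)_{e_i}) = \sum_{v\colon h(v)=i}(v e_{t(v)}\otimesk e_i)_v$, which contains both $(a e_{t(a)}\otimesk e_i)_a$ and $(b e_{t(b)}\otimesk e_i)_b$ with coefficient $+1$, plus (since there are at least two such arrows) possibly further terms $(c e_{t(c)}\otimesk e_i)_c$. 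As this is the unique place where the $a$- and $b$-components appear in the image of $\tilde d_1$, any coboundary equals a scalar multiple of $\tilde d_1((e_i\otimesk e_i)_{e_i})$ restricted to these components, and $\eta = (a\cdots)_a - (b\cdots)_b$ has the two coefficients $+1$ and $-1$, which cannot match $\lambda\cdot(+1)$ and $\lambda\cdot(+1)$ for any scalar $\lambda$. Hence $\eta$ is a nontrivial element of $\Ext^1_{\Lambda^e}(\Lambda,\Lambda^e)$.

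The main obstacle is the bookkeeping around the even differential $\tilde d_2$: one must be careful that no \emph{other} relation's component receives a contribution from $\eta$ that fails to cancel — in particular, relations $r$ with $a\mid r$ but $a$ not the terminal arrow are excluded precisely by the sink hypothesis, and one should spell out that among relations divided by $a$, the role of $a$ as ``end'' makes the relevant $\Le_a(r)=e_i$, so the images land only in the expected spots and indeed sum to zero. Once this is pinned down, the coboundary argument is essentially the same as in Proposition \ref{prop:arrow_start_end}. (The source case follows by applying the result to $\Lambda^{\op}$, since $\Ext^1_{(\Lambda^{\op})^e}(\Lambda^{\op},(\Lambda^{\op})^e)\cong \Ext^1_{\Lambda^e}(\Lambda,\Lambda^e)$ and a source of $Q$ is a sink of $Q^{\op}$.)
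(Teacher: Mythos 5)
Your overall strategy is exactly the paper's: take the element $(ae_{t(a)}\otimesk e_i)_a$, check it is a $1$-cocycle, and observe that the presence of a second arrow $b$ with $h(b)=i$ prevents it from being a coboundary. The non-coboundary half of your argument is correct and in fact spelled out in more detail than in the paper.

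The cocycle half, however, contains a genuine error. When you apply $\tilde d_2$ to $(ae_{t(a)}\otimesk e_i)_a$ you use the formula for the \emph{generator} $(e_{t(a)}\otimesk e_i)_a$ and drop the left factor $a$, obtaining summands $(\mathcal R_a(r)\otimesk e_i)_r$. These do \emph{not} vanish individually ($\mathcal R_a(r)$ is a proper subpath of a minimal relation, hence nonzero in $\Lambda$), and none of the justifications you then reach for is valid: $a$ certainly does lie in $\Sub(r)$ for every relation $r$ it divides, and passing to the difference $\eta=(a\cdots)_a-(b\cdots)_b$ cannot help, since the relations through $a$ and those through $b$ index different components, so nothing cancels. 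The correct and much simpler point is that $\tilde d_2$ is a $\Lambda$-bimodule map, so
\[
\tilde d_2\bigl((ae_{t(a)}\otimesk e_i)_a\bigr)=a\cdot\tilde d_2\bigl((e_{t(a)}\otimesk e_i)_a\bigr)=\sum_{r\,:\,a\mid r}\bigl(a\,\mathcal R_a(r)\otimesk e_i\bigr)_r=\sum_{r\,:\,a\mid r}\bigl(r\otimesk e_i\bigr)_r=0,
\]
because each $r$ is a relation and hence zero in $\Lambda$; the sink hypothesis enters only to guarantee $\mathcal L_a(r)=e_i$, i.e.\ $r=a\,\mathcal R_a(r)$. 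With that one line restored, your proof coincides with the paper's.
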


\begin{proof}
We suppose that $i$ is a sink. The other case is dual. Let $a$ and $b$ be two arrows such that $h(a) = h(b) = i$. We claim that the element $(ae_{t(a)}\otimesk e_i)_a \in \bigoplus_{v\in Q_1} \Lambda e_{t(v)}\otimesk e_{h(v)} \Lambda$ is a cocycle in degree $1$. In fact, since $h(a)$ is a sink, every relation $r$ containing $a$ is of the form $r = a R_r(a)$ for some path $R_r(a)$. Therefore, 
\[
	\tilde d_2((ae_{t(a)}\otimesk e_i)_a) = \sum_{a | r} (aR_r(a) e_{t(r)}\otimes e_i)_r = 0.
\]
This is however not a coboundary. In fact, since $i$ is also the head of another arrow, we have that 
\[
	\tilde d_1((e_i \otimes e_i)_{e_i}) = (ae_{t(a)}\otimesk e_i)_a + (be_{t(b)}\otimesk e_i)_b + \ldots
\]
By the same reasoning as in Proposition \ref{prop:arrow_start_end}, we conclude that $\Ext^1_{\Lambda^e}(\Lambda, \Lambda^e)\not = 0$.
\end{proof}


\subsection{$n$-hereditary truncated path algebras}

We now consider the case of truncated path algebras $\Lambda =kQ/\mathcal J^{\ell}$ for some $\ell\geq 2$, where $Q$ is a finite quiver and $\mathcal J$ is the arrow ideal. In this case, the terms in the Bardzell resolution (\ref{eq:Bardzell}) are particularly easy to describe. Indeed, the vector space $k\APb(i)$ is generated by all paths of length $\frac{i}{2}\cdot \ell$ if $i$ is even and those of length $\left(\frac{i-1}{2}\cdot\ell +1\right)$ if $i$ is odd. Let $L(p)$ denote the length of a path $p$. We use the following results. 

\begin{theorem}{\cite[Theorem 2]{DHZL08}}
\label{lem:skeleton}
Let $\Lambda$ be a truncated path algebra. If $N$ is a non-zero $\Lambda$-module with skeleton $\sigma$, then the syzygy of $N$
\[
	\Omega N\cong \bigoplus_{q\text{ $\sigma$-critical}}\Lambda q.
\]
\end{theorem}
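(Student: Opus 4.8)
The plan is to realise $\Omega N$ concretely inside a projective cover of $N$ and then compare it, term by term, with $\bigoplus_{q}\Lambda q$ by means of a change of basis that is triangular with respect to the length of the $\sigma$-critical paths. Fix a projective cover $\pi\colon P=\bigoplus_s \Lambda e_{v_s}\to N$ with distinguished top elements $z_s:=\pi(e_{v_s})$, and recall that the skeleton $\sigma$ consists of \emph{routes} (paths $p$ with $t(p)=v_s$) such that $\{pz_s\}_{p\in\sigma}$ is a $k$-basis of $N$, $\sigma$ is closed under passing to initial subpaths (a forest, containing every trivial path), and for each $l$ the length-$l$ routes of $\sigma$ descend to a basis of $\mathcal J^l N/\mathcal J^{l+1}N$; a path $q=\alpha p$ with $\alpha\in Q_1$, $p\in\sigma$, $L(q)<\ell$ and $q\notin\sigma$ is $\sigma$-critical, and $\Lambda q$ denotes the cyclic submodule $\Lambda\cdot q\subseteq P$, which is isomorphic to $\Lambda e_{h(q)}/\mathcal J^{\ell-L(q)}e_{h(q)}$. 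The first step is to write down a $k$-basis of $\Omega N=\ker\pi$: the paths of length $<\ell$ split into routes and non-routes, and since $\{\pi(p):p\in\sigma\}$ is a basis there are unique scalars with $\pi(q)=\sum_{p\in\sigma}c_{q,p}\pi(p)$ for each non-route $q$; one checks directly that $\{\omega_q:=q-\sum_{p\in\sigma}c_{q,p}p\ :\ q\notin\sigma,\ L(q)<\ell\}$ is a $k$-basis of $\ker\pi$. Layer-compatibility of $\sigma$ forces the crucial constraint $c_{q,p}\neq 0\Rightarrow L(p)\geq L(q)$, since $\pi(q)\in\mathcal J^{L(q)}N=\operatorname{span}\{\pi(p):p\in\sigma,\ L(p)\geq L(q)\}$.

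Next I would do the combinatorics. Using that $\sigma$ is a forest of routes containing all trivial paths, every non-route $q$ of length $<\ell$ factors uniquely as $q=q''q_0$, where $q_0$ is the shortest initial subpath of $q$ that is $\sigma$-critical; conversely $q''q_0$ is a non-route for every such pair. This gives a bijection between the non-routes of length $<\ell$ and the pairs $(q_0,q'')$ with $q_0$ $\sigma$-critical and $q''$ a path from $h(q_0)$ with $L(q''q_0)<\ell$, whence
\[
\dim_k\Omega N \;=\; \sum_{q_0\ \sigma\text{-critical}}\#\{q'' : L(q''q_0)<\ell\} \;=\; \sum_{q_0\ \sigma\text{-critical}} \dim_k\Lambda q_0 .
\]
For a $\sigma$-critical $q_0$ the constraint above yields $\mathcal J^{\ell-L(q_0)}\omega_{q_0}=0$ (a path $r$ of length $\ell-L(q_0)$ kills $q_0$ by truncation and kills each $p$ with $c_{q_0,p}\neq 0$ since then $L(rp)\geq\ell$), so $rq_0\mapsto r\omega_{q_0}$ defines a well-defined surjective $\Lambda$-map $\Phi_{q_0}\colon\Lambda q_0\to\Lambda\omega_{q_0}\subseteq\Omega N$. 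Set $\Phi:=\bigoplus_{q_0}\Phi_{q_0}\colon\bigoplus_{q_0}\Lambda q_0\to\Omega N$; by the displayed dimension count it suffices to prove $\Phi$ injective, equivalently an isomorphism.

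To prove this I would compose with the $k$-linear projection $\rho\colon P\to P/\operatorname{span}(\sigma)$ along the routes. Since $\pi$ restricts to an injection on $\operatorname{span}(\sigma)$ with image $N$, $\operatorname{span}(\sigma)$ is a complement to $\ker\pi$, so $\rho|_{\Omega N}$ is already a $k$-linear isomorphism onto $P/\operatorname{span}(\sigma)$, whose natural basis is the set of non-routes; hence $\Phi$ is an isomorphism iff $\rho\Phi$ is. On the basis vector $rq_0$ of $\Lambda q_0$ one computes $\rho\Phi(rq_0)=rq_0-\sum_{p:\ rp\notin\sigma,\ rp\neq 0}c_{q_0,p}\,rp$, in which $rq_0$ is exactly the non-route corresponding to the pair $(q_0,r)$, while each surviving correction term $rp$ is a non-route whose $\sigma$-critical part $q_0'$ satisfies $L(q_0')\geq 1+L(p)\geq 1+L(q_0)>L(q_0)$. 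Therefore, ordering the $\sigma$-critical paths by length, the matrix of $\rho\Phi$ in the two natural bases is block lower triangular with identity diagonal blocks, hence invertible, and $\Phi\colon\bigoplus_{q_0\ \sigma\text{-critical}}\Lambda q_0\xrightarrow{\ \sim\ }\Omega N$ as required (in particular each $\Phi_{q_0}$ is an isomorphism onto $\Lambda\omega_{q_0}$).

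The main obstacle is exactly this last step. The naive guess $\omega_{q''q_0}=q''\omega_{q_0}$ fails as soon as some product $q''p$ leaves $\sigma$, so the submodules $\Lambda\omega_{q_0}$ genuinely interact and one cannot split $\Omega N$ "route by route" by hand; the whole content is to control these corrections, and layer-compatibility of the skeleton is precisely what makes every correction term strictly longer, in its critical part, than the generator it came from — this is what triangularises the comparison map. A secondary subtlety to keep in mind is that the scalars $c_{q,p}$ are not available explicitly, only constrained by the radical filtration, so the argument must be phrased entirely in terms of those constraints.
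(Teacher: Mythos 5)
The paper does not prove this statement: it is imported verbatim from \cite[Theorem 2]{DHZL08} and used as a black box, so there is no in-paper argument to compare yours against. Judged on its own, your proof is correct and complete, and it is essentially a self-contained reconstruction of the argument in the cited source. All the load-bearing steps check out: the constraint $c_{q,p}\neq 0\Rightarrow L(p)\geq L(q)$ does follow from the layer condition (which gives $\mathcal J^{l}N=\operatorname{span}\{\pi(p): p\in\sigma,\ L(p)\geq l\}$ by downward induction); this is exactly what makes $\mathcal J^{\ell-L(q_0)}\omega_{q_0}=0$, so each $\Phi_{q_0}$ is well defined; the uniqueness of the $\sigma$-critical initial subpath of a non-route uses precisely the closure of $\sigma$ under initial subpaths, and the same closure property is what forces every correction term $rp$ to have a strictly longer critical part, giving the block-triangular shape of $\rho\Phi$ with identity diagonal. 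Two minor points worth making explicit in a write-up: the paths $p$ occurring in $\omega_{q_0}$ may live in different indecomposable summands of $P$ than $q_0$ does (this is harmless, since the non-routes of all components together form the basis of $P/\operatorname{span}(\sigma)$, but it should be said); and one should note that only $p$ with $h(p)=h(q_0)$ can occur with $c_{q_0,p}\neq 0$, so that $e_{h(q_0)}\omega_{q_0}=\omega_{q_0}$ and the map $\Lambda e_{h(q_0)}\to P$ genuinely factors through $\Lambda q_0\cong \Lambda e_{h(q_0)}/\mathcal J^{\ell-L(q_0)}e_{h(q_0)}$.
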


\noindent We refer to the paper for the definitions of skeletons $\sigma$ and of $\sigma$-critical paths.

We also need the following result regarding extensions of certain kinds of indecomposable modules. 

\begin{proposition}[{\cite[Proposition 3.1]{Vas19}}]
\label{lem:indec_syz}
Let $\Lambda$ be a finite-dimensional algebra. Let $N\in\modu\Lambda$ be a non-projective indecomposable module. If $\Omega\, N$ is decomposable, then $\Ext^1_{\Lambda}(N, \Lambda)\not=0$.
\end{proposition}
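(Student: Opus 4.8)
The plan is to argue by contradiction: I assume $\Ext^1_{\Lambda}(N,\Lambda)=0$ and derive that $\Omega N$ cannot be decomposable. Let $p\colon P\to N$ be the projective cover, so that $\Omega N=\ker p$ and, crucially, $\Omega N\subseteq\operatorname{rad}P$; write $\iota\colon\Omega N\hookrightarrow P$ for the inclusion, and suppose for contradiction $\Omega N=X_1\oplus X_2$ with $X_1,X_2\neq 0$. The first observation is that $\Ext^1_{\Lambda}(N,P)=0$ as well: $P$ is a direct summand of some $\Lambda^m$, so $\Ext^1_{\Lambda}(N,P)$ is a direct summand of $\Ext^1_{\Lambda}(N,\Lambda)^m=0$. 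Applying $\Hom_{\Lambda}(-,P)$ to $0\to\Omega N\xrightarrow{\iota}P\xrightarrow{p}N\to 0$ therefore shows that $\iota^{\ast}\colon\Hom_{\Lambda}(P,P)\to\Hom_{\Lambda}(\Omega N,P)$ is surjective.

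Next I would lift idempotents through $p$. Let $e\in\End_{\Lambda}(\Omega N)$ be the idempotent projecting onto $X_1$ along $X_2$. By the surjectivity above, $\iota e\colon\Omega N\to P$ equals $\psi\iota$ for some $\psi\in\End_{\Lambda}(P)$, and similarly $\iota(1-e)=\psi'\iota$ for some $\psi'\in\End_{\Lambda}(P)$. From $\psi\iota=\iota e$ one sees that $\psi(\Omega N)\subseteq\Omega N$, so $\psi$ descends to $\bar\psi\in\End_{\Lambda}(N)$ with $p\psi=\bar\psi p$, and likewise $\psi'$ descends to $\bar\psi'$; since $(\psi+\psi')\iota=\iota$ we get $\bar\psi+\bar\psi'=\operatorname{id}_N$. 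Because $N$ is finite-dimensional and indecomposable, $\End_{\Lambda}(N)$ is local, so one of $\bar\psi,\bar\psi'$ is an automorphism of $N$; say $\bar\psi$ is.

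To finish, I would upgrade ``$\bar\psi$ is an automorphism'' to ``$\psi$ is an automorphism of $P$'': from $p\psi=\bar\psi p$ being surjective we get $\psi(P)+\Omega N=P$, and since $\Omega N\subseteq\operatorname{rad}P$ and $P$ is finitely generated, Nakayama's lemma gives $\psi(P)=P$; a surjective endomorphism of a finite-dimensional module is bijective. Then $\psi\iota=\iota e$ shows that $\psi$ restricts to an isomorphism $\Omega N\xrightarrow{\sim}\iota(X_1)\cong X_1$, so $X_1\oplus X_2=\Omega N\cong X_1$ and hence $X_2=0$, contradicting $X_2\neq 0$.

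The step I expect to require the most care is the middle one: checking that the lifts $\psi,\psi'$ genuinely descend to endomorphisms of $N$ summing to the identity, so that the locality of $\End_{\Lambda}(N)$ can be brought to bear — together with keeping track of why the projective cover (rather than an arbitrary projective presentation) is needed, namely for the containment $\Omega N\subseteq\operatorname{rad}P$ used in the Nakayama argument. Conceptually, the heart of the proof is the slogan that vanishing of $\Ext^1_{\Lambda}(N,\Lambda)$ allows idempotent endomorphisms of $\Omega N$ to be lifted through the projective cover, after which the indecomposability of $N$ is transported to $\Omega N$.
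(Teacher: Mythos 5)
The paper does not actually prove this statement --- it is imported verbatim from \cite[Proposition 3.1]{Vas19} --- so there is no in-paper proof to compare against; judged on its own, your argument is correct and is the standard one: use $\Ext^1_\Lambda(N,P)=0$ to extend $\iota e\colon \Omega N\to P$ to an endomorphism $\psi$ of $P$, pass to the induced endomorphism of $N$, invoke locality of $\End_\Lambda(N)$ and Nakayama to upgrade one of the lifts to an automorphism of $P$, and conclude $\Omega N\cong X_1$, forcing $X_2=0$. One step is stated more strongly than you have justified it: from $(\psi+\psi')\iota=\iota$ you only know that $\psi+\psi'$ restricts to the identity on $\Omega N$, and for an \emph{arbitrary} lift $\psi'$ of $\iota(1-e)$ this does not imply $\bar\psi+\bar\psi'=\operatorname{id}_N$ (it only gives that $\bar\psi+\bar\psi'-\operatorname{id}_N$ factors through $p$, hence through a projective). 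The cleanest repair is simply to take $\psi':=\operatorname{id}_P-\psi$, which is a valid lift since $(\operatorname{id}_P-\psi)\iota=\iota-\iota e=\iota(1-e)$, and then $\bar\psi'=\operatorname{id}_N-\bar\psi$ holds on the nose; alternatively, since $N$ is indecomposable and non-projective, maps $N\to N$ factoring through a projective lie in $\operatorname{rad}\End_\Lambda(N)$, so $\bar\psi+\bar\psi'$ is still a unit and the locality argument goes through. With that one-line fix the proof is complete; note also that the final step $X_1\oplus X_2\cong X_1\Rightarrow X_2=0$ uses finite-dimensionality of $\Omega N$, which is available here.
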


Let now $\mathbb A_m$ be the linearly oriented Dynkin quiver of type $A$ 

\begin{center}
\begin{tikzpicture}[auto, baseline=(current  bounding  box.center)]
	\node[circle, draw, thin,fill=black!100, scale=0.4] (0) at (0,0) {};
	\node[circle, draw, thin,fill=black!100, scale=0.4] (1) at (1.414,0) {};
	\node[circle, draw, thin,fill=black!100, scale=0.4] (2) at (2.828,0) {};
	\node[circle, draw, thin,fill=black!100, scale=0.4] (3) at (4.242,0) {};
	\node[circle, draw, thin,fill=black!100, scale=0.4] (4) at (5.656,0) {};
	\node[circle, draw, thin,fill=black!100, scale=0.4] (5) at (7.07,0) {};
	\draw[decoration={markings,mark=at position 1 with {\arrow[scale=2]{>}}},
    postaction={decorate}, shorten >=0.4pt] (0) to (1);
	\draw[decoration={markings,mark=at position 1 with {\arrow[scale=2]{>}}},
    postaction={decorate}, shorten >=0.4pt] (1) to (2);
	\draw[dashed] (3,0) to (4,0);
	\draw[decoration={markings,mark=at position 1 with {\arrow[scale=2]{>}}},
    postaction={decorate}, shorten >=0.4pt] (3) to (4);
    	\draw[decoration={markings,mark=at position 1 with {\arrow[scale=2]{>}}},
    postaction={decorate}, shorten >=0.4pt] (4) to (5);
\end{tikzpicture}
\end{center}
with $m$ vertices.

\begin{proposition}
\label{prop:rad_no_ext}
Let $Q$ be a finite acyclic quiver and assume that $Q\not = \mathbb A_m$. Let $\Lambda := kQ/\mathcal J^\ell$ for some $\ell\geq 2$ be a truncated path algebra. Then there exists $0<j<\gldim \Lambda$ such that $\Ext^j_{\Lambda^e}(\Lambda, \Lambda^e) \not = 0$. 
\end{proposition}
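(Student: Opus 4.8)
The plan is to use the necessary conditions established in Propositions \ref{prop:arrow_start_end} and \ref{prop:nosinksource}, together with Corollary \ref{cor:arrow_in_rel}, to force the quiver $Q$ into an extremely rigid shape, and then conclude that the only acyclic possibility is $\mathbb A_m$. Since $Q$ is acyclic and admissible, it has at least one source and one sink. For a truncated path algebra $kQ/\mathcal J^\ell$, the relations are all paths of length exactly $\ell$. By Corollary \ref{cor:arrow_in_rel}, if $\Lambda$ were $n$-hereditary (equivalently, if all the lower $\Ext_{\Lambda^e}^j(\Lambda,\Lambda^e)$ vanished), then every arrow would divide some relation, i.e.\ lie on a path of length $\ell$. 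Moreover Proposition \ref{prop:nosinksource} forces every source to have exactly one outgoing arrow and every sink to have exactly one incoming arrow. So I would first observe that if any of these conditions fails, we are immediately done by the corresponding cited result applied with $j=1$.

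Next, I would analyze the interior of the quiver. Suppose some vertex $v$ has two distinct incoming arrows $a,b$ (the case of two outgoing arrows is dual). If $v$ is not a sink, pick an outgoing arrow $c$ at $v$. Consider the arrow $a$: I claim $a$ is the end of every relation it divides. Indeed, since $Q$ is acyclic and all relations are paths of length $\ell$, a relation through $a$ that does not end at $a$ would have to continue past $h(a)=v$; but combined with... \emph{[here I would need to think more carefully]}. The cleaner route: apply Proposition \ref{prop:arrow_start_end} directly. If there is an arrow $a$ that is the start of every relation it divides and $t(a)$ is not a source, we get $\Ext^1\neq 0$; dually for ends and sinks. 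So the heart of the argument is to show: \emph{if $Q$ is acyclic, not equal to $\mathbb A_m$, and satisfies the conclusions of Propositions \ref{prop:arrow_start_end} and \ref{prop:nosinksource} and Corollary \ref{cor:arrow_in_rel}, then we reach a contradiction.} I would argue that these conditions force $Q$ to be a single path: every vertex has in-degree and out-degree at most $1$ (a branching vertex, being acyclic, would produce an arrow that is the start, or the end, of all relations through it at a vertex that is not a source/sink — using that relations have bounded length $\ell$ and the quiver is finite acyclic, so near the "ends" of $Q$ the arrows cannot be extended in one direction), hence $Q$ is a disjoint union of linearly oriented type-$A$ quivers, and by the connectedness assumption $Q = \mathbb A_m$, a contradiction.

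The main obstacle I anticipate is the branching argument: carefully showing that at any in-degree-$\geq 2$ or out-degree-$\geq 2$ vertex one can \emph{locate} an arrow satisfying the hypothesis of Proposition \ref{prop:arrow_start_end} or Proposition \ref{prop:nosinksource}. The subtlety is that an arrow at a branching vertex might still divide relations on "both sides", so it is not automatically the start or end of all its relations; one must travel along the quiver toward a source or sink. Since $Q$ is finite and acyclic, following arrows backward from any arrow reaches a source within finitely many steps; the worry is whether, along the way, one stays inside relations of length $\ell$. Here I would use Corollary \ref{cor:arrow_in_rel} (every arrow is on a length-$\ell$ path) together with a counting/extremality argument on distances to sources and sinks to pin down an arrow $a$ adjacent to a source-or-sink-but-not-equal-to-one whose only relations go one way, contradicting Proposition \ref{prop:arrow_start_end}. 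Alternatively, one can invoke the syzygy results (Theorem \ref{lem:skeleton} and Proposition \ref{lem:indec_syz}): at a branching vertex, a suitable simple or uniserial module has decomposable syzygy, yielding $\Ext^j_\Lambda(D\Lambda,\Lambda)\neq 0$ for some $0<j<\gldim\Lambda$ via an iterated-syzygy argument, which would be an independent route to the same contradiction.
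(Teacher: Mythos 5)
There is a genuine gap: the core of your plan is that the degree-one obstructions (Propositions \ref{prop:arrow_start_end} and \ref{prop:nosinksource} together with Corollary \ref{cor:arrow_in_rel}) already force $Q=\mathbb A_m$, and this is false. Take $\ell=2$ and the quiver with two paths $u_0\xrightarrow{\alpha_1}u_1\xrightarrow{\alpha_2}i$ and $v_0\xrightarrow{\beta_1}v_1\xrightarrow{\beta_2}i$ merging at $i$, followed by a single arrow $\gamma\colon i\to w$. Every arrow divides a relation; each source has one outgoing arrow and the sink one incoming arrow; $\alpha_1$ (resp.\ $\gamma$) is the start (resp.\ end) of all its relations but its tail (resp.\ head) \emph{is} a source (resp.\ sink), so Proposition \ref{prop:arrow_start_end} is silent; and $\alpha_2,\beta_2$ each divide relations on both sides, so they satisfy no one-sidedness hypothesis. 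One can check directly that $\Ext^1_{\Lambda^e}(\Lambda,\Lambda^e)=0$ for this algebra, while $\Ext^2_\Lambda(I_i,\Lambda)\neq 0$ (here $\gldim\Lambda=3$). So the obstruction genuinely lives in degree $j>1$, and no amount of "travelling toward a source or sink" will locate an arrow to which Proposition \ref{prop:arrow_start_end} applies. Your fallback via Theorem \ref{lem:skeleton} and Proposition \ref{lem:indec_syz} also fails on this example: all syzygies $\Omega^k I_j$ are indecomposable (e.g.\ $\Omega I_i\cong S_i$ and $\Omega S_i$ is projective), so the decomposable-syzygy criterion never triggers.

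This is exactly why the paper's proof is not a formal consequence of the degree-one propositions. After disposing of sources/sinks with several arrows via Proposition \ref{prop:nosinksource}, it fixes a branching vertex $i$, chooses maximal paths $T_1,T_2$ into $i$ and $T_3$ out of $i$, and splits into cases according to how $L(T_2)$, $L(T_3)$ and $L(T_3T_2)$ compare with $\ell$. Two of the cases are handled by explicit degree-one cocycles in the spirit of your plan, but when $L(T_2)\geq\ell$ and $L(T_3)\geq\ell-1$ the paper constructs a cocycle in degree $j>1$ inside Bardzell's resolution (this is the case containing the example above), and when $L(T_3)\leq\ell-2$ it switches to the skeleton/syzygy argument applied to the injective at $h(T_3)$, where a careful analysis of $\sigma$-critical paths is needed. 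Your proposal would need to be supplemented with both of these mechanisms to close the gap.
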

\begin{proof}
By Proposition \ref{prop:nosinksource}, if $Q$ is a Dynkin quiver of type $A$ with a non-linear orientation, then $\Ext^1_{\Lambda^e}(\Lambda, \Lambda^e)\not = 0$. Since $Q\not=\mathbb A_m$, there exists a vertex $i$ which divides at least $3$ arrows. If $i$ is either a source or a sink, then $\Ext^1_{\Lambda^e}(\Lambda, \Lambda^e)\not = 0$ by Proposition \ref{prop:nosinksource} as well. 

Now suppose that $i$ is the head of at least two arrows and the tail of at least one arrow. The opposite case is treated similarly. Among the arrows with head $i$ we pick two, say, $a_r$ and $b_s$ satisfying that $a_r \neq b_s$ and that there exist paths $T_1:= a_r\cdots a_1$ and $T_2:=b_s\cdots  b_1$ which are maximal in the following sense: without loss of generality, we let $T_1$ be the longest path in $kQ$ ending at $i$ and $T_2$ the maximal path in $kQ$ ending at $i$ not divided by $a_r$. Note that this uses that $Q$ is acyclic.
In particular, we assume $L(T_2) \leq L(T_1)$. 
Moreover, we let $T_3 := c_t \cdots c_1$ be the longest path in $kQ$ beginning in $i$. 

We may also assume that $h(T_3)$ is only a sink to the arrow $c_t$ and $t(T_i)$ is a source to only one arrow for $i=1,2$, since otherwise $\Ext^1_{\Lambda^e}(\Lambda, \Lambda^e)\not = 0$ and we are done.

We split the proof into the following cases: 
\begin{itemize}
	\item [\textbf{C1}:] $L(T_3T_2) \leq \ell-1$
	\item [\textbf{C2}:] $L(T_3T_2)\geq \ell$
		\begin{itemize}
			\item [\textbf{a)}] $L(T_2)\leq \ell -1 $
			\item [\textbf{b)}] $L(T_2)\geq \ell$
		
			\begin{itemize}
				\item [\textbf{i)}] $L(T_3) \geq \ell -1$
				\item [\textbf{ii)}] $L(T_3) \leq \ell -2$\\
			\end{itemize}
		\end{itemize}
\end{itemize}

\noindent\textbf{C1}: If $L(T_3T_2)\leq \ell-1$, then $b_s$ does not divide any relation, by the maximality assumption on the length of $T_3T_2$. 
As a consequence of Lemma \ref{lem:deltaK_n=K_{n-1}}, $\Ext^1_{\Lambda^e}(\Lambda, \Lambda^e)\not = 0$ and we are done.\\

 \noindent\textbf{C2}: Now suppose that $L(T_3T_2)\geq \ell$.\\
 
 \noindent\textbf{a)} If $L(T_2)\leq\ell-1$, then for any relation path $p$ (of length $\ell$) such that $b_s$ divides $p$, we have that 
 \[
 	L(\mathcal L_{b_s}(p))\geq \ell - s\geq \max(1, \ell-r),
\]
where $r := L(T_1)$ and $s:=L(T_2)$. The first inequality is explained by the maximality assumption on $L(T_3T_2)$.  
For the second inequality, recall that we have assumed without loss of generality that $r\geq s$. This means that any path of maximal length starting at $i$ is of length at least $\max(1, \ell-r)$. Therefore, the element 
 \[
 	(e_{t(b_s)}\otimesk e_ia_r\cdots a_{\max(1, r-\ell+2)})_{b_s}\in\bigoplus_{v\in Q_1}\Lambda e_{t(v)}\otimesk e_{h(v)}\Lambda
 \]
is a non-trivial cocycle. 
It is not a coboundary since the only two $\Lambda$-bimodule generators in\\ $\bigoplus_{i\in Q_0}\Lambda e_{i}\otimesk e_{i}\Lambda$ which map non-trivially via $\tilde d_1$ to an element in $\Lambda e_{t(b_s)}\otimesk e_i\Lambda$ are 
\[
	(e_{t(b_s)}\otimesk e_{t(b_s)})_{e_{t(b_s)}}\mapsto (e_{t(b_s)}\otimesk e_i b_s)_{b_s} + \ldots
\]
and 
\[
	(e_{i}\otimesk e_{i})_{e_{i}}\mapsto (b_s e_{t(b_s)}\otimesk e_i)_{b_s} + \ldots
\]
and they cannot be linearly combined to obtain our cocycle. Thus, $\Ext^1_{\Lambda^e}(\Lambda, \Lambda^e)\not = 0$. \\

\noindent\textbf{b.i)} We now consider the case where $L(T_2)\geq\ell$. Let $j\in\N_{\geq 1}$ be such that the length of the paths in $k\APb(j)$ is less than or equal to $L(T_2)$, but the length of the paths in $k\APb(j+1)$ is strictly bigger than $L(T_2)$. 
If $L(T_3)\geq \ell-1$, then $0<j<\gldim\Lambda$, since $k\APb(j+1)$ is non empty, as it contains a path dividing $T_3T_2$. 
Let $T:=b_s\cdots b_x$ be the path in $k\APb(j)$ ending at $i$ and dividing $T_2$. 
Then the element 
\[
	(e_{t(T)}\otimesk e_{i}a_r\cdots a_{r-\ell+2})_T\in \bigoplus_{p\in \APb(j)}\Lambda e_{t(p)}\otimesk e_{h(p)}\Lambda
\]
is a cocycle. 
Indeed, for any $T'\in \APb(j+1)$ which is divided by $T$, we have $L(\mathcal L_{T}(T'))\geq 1$. 
This is explained by the fact that $L(T')>L(T_2)$ and the maximality assumption on the length of $T_2$. 
In fact, if $L(\mathcal L_{T}(T'))=0$, then $h(T') = i$ and $L(T_3T') > L(T_3T_2)$, contradicting our hypothesis. 

The element is not a coboundary for a similar reason as above. 
\\

\noindent\textbf{b.ii)} Now, if $L(T_3)\leq\ell-2$, then we consider the indecomposable injective module $I$ associated to the vertex $h:=h(T_3)$. 
We show that either there are more than one $\sigma$-critical paths or there is only one $\sigma$-critical path $q$ and $\Lambda q$ is projective. 
In the former case, we conclude by Theorem \ref{lem:skeleton} and Proposition \ref{lem:indec_syz} that $\Ext^1_{\Lambda^e}(\Lambda, \Lambda^e)\cong \Ext^1_{\Lambda}(D\Lambda, \Lambda) \not =0$. 
In the latter case, we obtain the same conclusion since $\projdim I = 1$.

We call \emph{branching points} the vertices which divide at least $3$ arrows. Let $\mathcal S_h$ be the support of paths of length $\ell-1$ in $kQ$ which end in $h$. Let $\mathcal B_h$ be the set of branching points which are in $\mathcal S_h$. Since $L(T_3)\leq\ell-2$, we have that $i\in\mathcal B_h$. 

Let $\mathcal  S'_h\subset \mathcal S_h$ be the set of vertices which start the paths of length $\ell-1$ that end in $h$. Note that $P:=\bigoplus_{\iota\in \mathcal S'_h} \Lambda e_{\iota}^{m(\iota)}$ is the projective cover of $I$, where $m(\iota)$ is the number of paths of length $\ell$ which ends in $h$ and starts in $\iota$. Because $L(T_3)<\ell-1$, we have $|\mathcal S'_h|\geq 2$, since it contains a vertex in $T_1$ and $T_2$. Thus, $I$ is not a projective module.

Let $x\in B_h$ be such that there exist arrows $\alpha$ and $\beta$ ending in $x$. Then
either paths of the form $\alpha p$ or of the form $\beta q$ are in the skeleton $\sigma$, for $p,q \in \sigma$. The paths not in $\sigma$ must then be $\sigma$-critical. In fact, they get identified via  $P\twoheadrightarrow I$. Thus, every such branching point gives rise to $\sigma$-critical paths. 

Now suppose that there exists a vertex $x\in \mathcal B_h$ which is the start of an arrow $\alpha$ not in a path of length $\ell-1$ ending in $h$. Then for any skeleton $\sigma$, we have that any path of the form $\alpha p$, for $p\in \sigma$, is $\sigma$-critical, since it goes to $0$ via $P\twoheadrightarrow I$. 

Therefore, in order to have only one $\sigma$-critical path, it is necessary that the full subquiver $\bar Q$ containing all the directed paths connected to the branching points in $\mathcal B_h$ is given by
\begin{center}
\begin{tikzpicture}[auto, baseline=(current  bounding  box.center)]
	\node[circle, draw, thin,fill=black!100, scale=0.4] (0) at (-2.598, 1.5) {};
	\node[circle, draw, thin,fill=black!100, scale=0.4] (1) at (-1.732,1) {};
	\node[circle, draw, thin,fill=black!100, scale=0.4] (2) at (-0.866,0.5) {};
	\node[] (3) at (0,0) {i};
	\node[circle, draw, thin,fill=black!100, scale=0.4] (4) at (-2.598,-1.5) {};
	\node[circle, draw, thin,fill=black!100, scale=0.4] (5) at (-1.732,-1) {};
	\node[circle, draw, thin,fill=black!100, scale=0.4] (6) at (-0.866,-0.5) {};
	\node[circle, draw, thin,fill=black!100, scale=0.4] (7) at (1,0) {};
	\node[circle, draw, thin,fill=black!100, scale=0.4] (8) at (2,0) {};
	\node[] (9) at (3,0) {h};

	\draw[decoration={markings,mark=at position 1 with {\arrow[scale=2]{>}}},
   postaction={decorate}, shorten >=0.4pt] (0) to (1);
	\draw[dashed] (1) to (2);
    	\draw[decoration={markings,mark=at position 1 with {\arrow[scale=2]{>}}},
   postaction={decorate}, shorten >=0.4pt] (2) to (3);
	\draw[decoration={markings,mark=at position 1 with {\arrow[scale=2]{>}}},
    postaction={decorate}, shorten >=0.4pt] (4) to (5);
	\draw[dashed] (5) to (6);
	\draw[decoration={markings,mark=at position 1 with {\arrow[scale=2]{>}}},
    postaction={decorate}, shorten >=0.4pt] (6) to (3);

	\draw[decoration={markings,mark=at position 1 with {\arrow[scale=2]{>}}},
   postaction={decorate}, shorten >=0.4pt] (3) to (7);
	\draw[dashed] (7) to (8);
    	\draw[decoration={markings,mark=at position 1 with {\arrow[scale=2]{>}}},
    postaction={decorate}, shorten >=0.4pt] (8) to (9);

\end{tikzpicture}
\end{center}
In this case, we have that $\Omega I \cong \Lambda e_i$ is projective. \end{proof}

The $n$-representation-finite Nakayama algebras were classified by Vaso in \cite{Vas19}. Using his classification, we obtain as a corollary of the previous proposition a classification of all $n$-hereditary algebras of the form $\Lambda = kQ/\mathcal J^{\ell}$.  

\begin{theorem}
\label{thm:class_trunc}
Let $\Lambda = kQ/\mathcal J^{\ell}$ for some $\ell\geq 2$ and finite quiver $Q$. The following are equivalent.
\begin{enumerate}
\item $\Lambda$ is $n$-hereditary;
\item $\Lambda \cong k\mathbb A_m/\mathcal J^{\ell}$, for some $m$, and $\ell \,|\, m-1$ or $\ell =2$.
\end{enumerate}
In this case, $n = 2\frac{m-1}{\ell}$ and $\Lambda$ is a Nakayama $n$-representation-finite algebra. 
\end{theorem}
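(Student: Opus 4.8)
The plan is to prove the two implications separately, together with the numerical statement; almost all of the content is already packaged in Proposition \ref{prop:rad_no_ext} and in Vaso's classification \cite{Vas19}, so the argument is mostly a matter of assembly. For $(2)\Rightarrow(1)$: if $\Lambda\cong k\mathbb A_m/\mathcal J^\ell$ with $\ell\mid m-1$ or $\ell=2$, then $\Lambda$ is a truncated, linearly oriented Nakayama algebra, and by \cite{Vas19} it is $n$-representation-finite with $n=2\frac{m-1}{\ell}$; in particular it is $n$-hereditary, and this also gives the final sentence of the statement.

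For $(1)\Rightarrow(2)$, assume $\Lambda=kQ/\mathcal J^\ell$ is $n$-hereditary, so that $\gldim\Lambda=n<\infty$. The first step is to show that $Q$ is acyclic, which is exactly what is required in order to invoke Proposition \ref{prop:rad_no_ext}. This is the known fact that a truncated path algebra $kQ/\mathcal J^\ell$ with $\ell\geq 2$ has finite global dimension precisely when $Q$ is acyclic; concretely, if $Q$ had an oriented cycle through a vertex $v$, then — using that $\Omega S_v=\operatorname{rad}(P_v)=\bigoplus_{a\colon v\to w}\Lambda a$ and that each $\Lambda a$ is a proper quotient of $P_w$ — a short induction produces, in every syzygy of $S_v$, a nonzero direct summand that is a proper quotient of an indecomposable projective at a vertex lying on the cycle, hence is not projective. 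Thus $\projdim S_v=\infty$, so $\gldim\Lambda=\infty$, contradicting $n$-hereditariness.

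With $Q$ acyclic, Corollary \ref{cor:vosnex} gives $\Ext^j_{\Lambda^e}(\Lambda,\Lambda^e)=0$ for all $0<j<n=\gldim\Lambda$, so the contrapositive of Proposition \ref{prop:rad_no_ext} forces $Q=\mathbb A_m$ for some $m$. Hence $\Lambda=k\mathbb A_m/\mathcal J^\ell$ is a Nakayama algebra, therefore representation-finite and in particular not $n$-representation-infinite; by the dichotomy theorem it is then $n$-representation-finite. Applying \cite{Vas19} once more, $k\mathbb A_m/\mathcal J^\ell$ is $n$-representation-finite if and only if $\ell\mid m-1$ or $\ell=2$, and in that case $n=2\frac{m-1}{\ell}$; this yields $(2)$ together with the remaining assertions.

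The only genuinely non-formal step is the acyclicity reduction: Proposition \ref{prop:rad_no_ext} is stated only for acyclic quivers, and the earlier obstruction results (Corollary \ref{cor:arrow_in_rel}, Propositions \ref{prop:arrow_start_end} and \ref{prop:nosinksource}) do not by themselves exclude oriented cycles, so one must first rule these out via the global dimension count above. Once acyclicity is in hand, the rest is a direct application of results already established.
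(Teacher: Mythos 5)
Your proof is correct and follows essentially the same route as the paper: acyclicity of $Q$, then Corollary \ref{cor:vosnex} plus Proposition \ref{prop:rad_no_ext} to force $Q=\mathbb A_m$, then the dichotomy theorem and Vaso's classification. The only difference is that the paper simply cites \cite[Theorem 5]{DHZL08} for the fact that a truncated path algebra of finite global dimension has acyclic quiver, whereas you reprove it by the (correct) syzygy induction.
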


\begin{proof}
By \cite[Theorem 5]{DHZL08}, any truncated path algebra of finite global dimension must have an acyclic quiver. By Proposition \ref{prop:rad_no_ext}, if $\Lambda$ is $n$-hereditary, then its quiver must be $\mathbb A_m$, since $n$-hereditary algebras satisfy the property that $\Ext^i_{\Lambda^e}(\Lambda, \Lambda^e) =0$ for all $0<i<n$. Therefore, $\Lambda$ is an  $n$-representation-finite Nakayama algebra. The result thus follows from \cite[Theorem 3]{Vas19}.
\end{proof}


\section{Classification of $n$-hereditary quadratic monomial algebras} 

In this section, we give a partial classification of the $n$-hereditary quadratic monomial algebras. Let $\Lambda = kQ/I$ be such an algebra. In the first subsection, we tackle the case $n=2$. With the additional assumption that the preprojective algebra can be given by a planar selfinjective quiver with potential, we show that there are only two examples. 
Then, in the next subsection, we show that provided $n\geq 3$, the only $n$-hereditary quadratic monomial algebras are the Nakayama ones given in the previous section.


\subsection{The case $n=2$} 

The goal of this section is to prove the following theorem. 

\begin{theorem}
\label{thm:main_quad_mono}
Let $\Lambda= kQ/ I $ be a $2$-hereditary quadratic monomial algebra. Assume that $\Pi(\Lambda)$ is given by a planar quiver with potential. Then $\Lambda$ is one of the two bounded quiver algebras given in (\ref{two_quivers}). These algebras are $2$-representation-finite.
\end{theorem}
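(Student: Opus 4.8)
The plan is to read the quiver off the earlier results, compute $\Pi(\Lambda)$ explicitly, and then let planarity together with the selfinjectivity forced by $2$-heredity finish the job.

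\emph{The quiver.} Since $\Lambda$ is $2$-hereditary, Corollary~\ref{cor:vosnex} gives $\Ext^1_{\Lambda^e}(\Lambda,\Lambda^e)=0$, so by Corollary~\ref{cor:star} the quiver $Q$ is an $(r,s)$-star with some central vertex $c$ (Definition~\ref{def:star}). By Corollary~\ref{cor:arrow_in_rel} every arrow divides a relation; since $\Lambda$ is quadratic, each relation is a path of length two, and such a path factors as $\beta\alpha$ with $\alpha$ an arrow into $c$ and $\beta$ an arrow out of $c$. Hence the set $M$ of relations is recorded by a bipartite graph $R$ on the in-arrows and out-arrows at $c$ with no isolated vertex.

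\emph{The preprojective algebra.} Being quadratic monomial, $\Lambda$ is Koszul of global dimension $2$, so Proposition~\ref{thm:quiver_construction} applies with $K_2=M$ and $K_1=V$; it describes $\Pi(\Lambda)$ as $T_S\overline V/\langle\overline M\rangle$, where $\overline V$ adjoins to $V$ a ``return'' arrow $\gamma_{\beta\alpha}\colon h(\beta)\to t(\alpha)$ for each $\beta\alpha\in M$, and $\overline M=M\cup\widetilde M$. Unwinding the formula for $\widetilde M$ shows that $\langle\overline M\rangle$ is the Jacobian ideal of the potential $W:=\sum_{\beta\alpha\in M}\gamma_{\beta\alpha}\beta\alpha$, a sum of $3$-cycles: indeed $\partial_{\gamma_{\beta\alpha}}W=\beta\alpha$, $\partial_\alpha W=\sum_\beta\gamma_{\beta\alpha}\beta$ and $\partial_\beta W=\sum_\alpha\alpha\gamma_{\beta\alpha}$. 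Thus $\Pi(\Lambda)\cong\Jac(Q_\Pi,W)$.

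\emph{Using planarity.} Now use that $(Q_\Pi,W)$ is a planar quiver with potential: its faces are directed cycles whose boundaries sum to $W$, so around each vertex the in-arrows and out-arrows alternate in the cyclic order. The return arrows avoid $c$, so the local picture at $c$ is the same in $Q$ and $Q_\Pi$, and alternation at $c$ forces $r=s$; moreover the faces incident to $c$ are exactly the $3$-cycles $\gamma_{\beta\alpha}\beta\alpha$, one for each cyclically adjacent (in-arrow, out-arrow) pair at $c$, so every arrow at $c$ lies in at most two of them. Reading off the cyclic order at $c$, one is led to the conclusion that $R$ is a single $2r$-cycle (the unbounded face accounting for the one ``missing'' relation when $r=1$). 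In particular $\Lambda$ is determined up to isomorphism by the integer $r\ge 1$; write $\Lambda_r$ for it and $\Pi_r:=\Pi(\Lambda_r)=\Jac(Q_\Pi,W)$.

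\emph{Eliminating $r\notin\{1,4\}$.} By the dichotomy theorem $\Lambda_r$ is either $2$-RF, in which case $\Pi_r$ is selfinjective, or $2$-RI, in which case $\Pi_r$ is a bimodule $3$-Calabi--Yau algebra of Gorenstein parameter $1$. A direct analysis of $\Jac(Q_\Pi,W)$ for the family $\Lambda_r$ — working out, from the relations above, the radical layers of $\Pi_r e_c$ together with the socles and lengths of the $\Pi_r e_{a_i}$ — should show that neither possibility occurs unless $r\in\{1,4\}$: for $r=1$ one gets $\Lambda_1\cong k\mathbb A_3/\mathcal J^2$, which is $2$-representation-finite by Section~\ref{sec:trunc}; for $r=4$ one gets the $(4,4)$-star of \eqref{two_quivers}, whose preprojective is $\Pi(\mathbb A_3^{\operatorname{bip}}\otimes_k\mathbb A_3^{\operatorname{bip}})$, so that $\Lambda_4$ is $2$-representation-finite as a cut of a selfinjective quiver with potential; and for the remaining values of $r$ one checks, for instance, that some indecomposable projective $\Pi_r e_{a_i}$ fails to be injective and that $\Pi_r$ is not $3$-Calabi--Yau, contradicting $2$-heredity. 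This classification, together with the two displayed algebras being known $2$-representation-finite, also yields the final assertion. I expect this last step to be the main obstacle: it requires tight control of the Jacobian algebra $\Jac(Q_\Pi,W)$, especially for $r\ge 3$, where paths ``spiral'' around the central vertex $c$; an alternative route would be to invoke a classification of selfinjective planar quivers with potential and single out which of their cuts are quadratic monomial.
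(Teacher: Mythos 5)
Your setup follows the paper's route: Corollary \ref{cor:star} gives the $(r,s)$-star, the Jacobian description of $\Pi(\Lambda)$ with potential $W=\sum\gamma_{\beta\alpha}\beta\alpha$ is the one the paper uses after Theorem \ref{thm:kel}, and planarity forcing $r=s$ with the relations forming a single cycle around the central vertex is exactly the paper's intermediate conclusion. Two things are missing, one minor and one fatal.

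The minor one: planarity alone does not force every arrow at $c$ to lie in \emph{exactly} two triangles of $W$ --- an arrow could border the unbounded face and lie in only one. Planarity (via the cited fact that each arrow lies in at most two summands of $W$) gives the upper bound; the lower bound comes from the fact that $\Pi(\Lambda)$ admits a monomial cut, so each triangle contains a border, together with Lemma \ref{lem:arrow_two_relations} (no triangle has exactly two borders) and Lemma \ref{lem:every_arrow_border} (ruling out the all-borders case except for the single triangle). Without this input your claim that the in/out arrows alternate at $c$ and that the relation graph is a single $2r$-cycle is not justified.

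The fatal one: the entire content of the theorem is the elimination of $r\notin\{1,4\}$, and your proposal explicitly defers it (``a direct analysis \dots should show'', ``I expect this last step to be the main obstacle''). The Loewy-length/socle analysis you gesture at is essentially Lemma \ref{lem:no_m-1_rel}, but that only rules out the $2$-RF case for $r\le 3$ (giving $r\ge 4$ or $(1,1)$); it says nothing about $r\ge 5$, and it does not address the $2$-RI alternative at all. The paper handles $r=2,3$ in the $2$-RI case by noting $\Pi$ is visibly finite-dimensional, and kills all $r\ge 5$ uniformly (both RF and RI) by a cohomological computation: it writes down the projective resolution of the injective $I_m$ at a sink, applies $\Hom_\Lambda(-,\Lambda e_{t_2})$, and shows by a dimension count ($1$, $r-3$, $r-5$) that the complex fails to be exact in the middle, so $\Ext^1_\Lambda(D\Lambda,\Lambda)\neq 0$, contradicting Corollary \ref{cor:vosnex}. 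Some argument of this kind is indispensable; as written your proof establishes only the structural reduction, not the classification.
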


These two algebras were already known to be $2$-representation-finite. 
In fact, the first one appears already in \cite[Theorem 3.12]{IO13}. 
The second one is a cut, a notion defined below, of $\Pi(\mathbb A_3^{\operatorname{bip}}\otimes_k \mathbb A_3^{\operatorname{bip}})$, where $\mathbb A_3^{\operatorname{bip}}$ is the Dynkin quiver of type $A$ with three vertices and bipartite orientation.  

We provide more information on the preprojective algebra $\Pi(\Lambda)$ of a $2$-hereditary algebra. It is a Jacobian algebra which is selfinjective in the case when $\Lambda$ is $2$-representation-finite, and $3$-Calabi--Yau in the case when $\Lambda$ is $2$-representation-infinite. We give a brief overview of these useful facts. They are key in our classification result.

\begin{definition}
Let $Q$ be a quiver and $\mathcal J$ be the ideal generated by arrows. A \emph{potential} $W$ is an element in $\widehat{kQ}/[\widehat{kQ, kQ}]$, where $\widehat{kQ}$ is the completion of the path algebra with respect to the $\mathcal J$-adic topology.  
\end{definition}

\begin{definition}
Let $(Q, W)$ be a quiver with potential. The \emph{Jacobian algebra} of $(Q, W)$ is defined as 
\[
	\Jac(Q, W):=\widehat{kQ}/\langle\delta_a W \,| \, a \in Q_1\rangle.
\]
\end{definition}

Every $3$-preprojective algebra is a Jacobian algebra. 

\begin{theorem}\cite[Theorem 6.10]{Kel11}
\label{thm:kel}
Let $\Lambda$ be a finite-dimensional algebra of global dimension $2$. Then there exists a quiver $Q_{\Lambda}$ and a potential $W_{\Lambda}$ such that $\Pi(\Lambda)\cong \Jac(Q_{\Lambda}, W_{\Lambda})$. 
\end{theorem}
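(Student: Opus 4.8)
The plan is to construct the quiver $Q_\Lambda$ and the potential $W_\Lambda$ explicitly and then match the presentation of $\Jac(Q_\Lambda,W_\Lambda)$ with that of $\Pi(\Lambda)=T_\Lambda\Ext^2_\Lambda(D\Lambda,\Lambda)$. Write $\Lambda=kQ/I$ and fix a minimal set of relations $\{\rho_s\}$; by minimality these are in bijection with a basis of $\bigoplus_{i,j}\Ext^2_\Lambda(S_i,S_j)$, say $\rho_s$ a path-combination from $t(\rho_s)=i$ to $h(\rho_s)=j$. Take $Q_\Lambda$ to have the same vertices as $Q$, with arrows those of $Q$ together with one new arrow $a_{\rho_s}\colon h(\rho_s)\to t(\rho_s)$ for each $s$. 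This choice is forced: the $\Lambda$-bimodule generators of $\Ext^2_{\Lambda^e}(\Lambda,\Lambda^e)\cong\Ext^2_\Lambda(D\Lambda,\Lambda)$ (in the sense of the complex of Section \ref{sec:comp}) correspond exactly to the $\rho_s$, with orientation reversed, so the quiver of the tensor algebra $T_\Lambda\Ext^2_\Lambda(D\Lambda,\Lambda)$ must be $Q_\Lambda$. For Koszul $\Lambda$ this is precisely Proposition \ref{thm:quiver_construction} with $n=2$.

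For the potential I would pass to a minimal model: by Kadeishvili, $E:=\Ext^*_\Lambda(S,S)$ carries an $A_\infty$-algebra structure quasi-isomorphic to $\RHom_\Lambda(S,S)$, and since $\gldim\Lambda=2$ it is concentrated in degrees $0,1,2$ with $E^0=kQ_0$, $E^1$ dual to the arrows of $Q$, and $E^2$ dual to the $\{\rho_s\}$; for degree reasons the only higher products that can survive are $m_2$ and the maps $m_n\colon(E^1)^{\otimes n}\to E^2$, $n\ge 2$. Equivalently, all of this information is carried by the differential $d_2$ of the minimal projective bimodule resolution $0\to P_2\xrightarrow{d_2}P_1\xrightarrow{d_1}P_0\to\Lambda\to0$ (of length exactly $2$, since $\projdim_{\Lambda^e}\Lambda=\gldim\Lambda=2$ over the algebraically closed field $k$). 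I then set
\[
	W_\Lambda \;:=\; \sum_{n\ge 2}\;\sum_{\beta_1,\dots,\beta_n\in Q_1,\; s}\;\big\langle m_n(\beta_1^{*},\dots,\beta_n^{*}),\,\rho_s\big\rangle\; a_{\rho_s}\,\beta_n\cdots\beta_1 \;\in\; \widehat{kQ_\Lambda}/[\widehat{kQ_\Lambda},\widehat{kQ_\Lambda}],
\]
where $\beta^{*}$ denotes the dual basis element of $E^1$, $\langle-,-\rangle$ the pairing of $E^2$ with its dual, and each term is a cyclic path at $t(\rho_s)$. Concretely, the $n=2$ summand records the quadratic leading term of each $\rho_s$, while the higher $m_n$ supply the ``overlap corrections'' that appear precisely when $\Lambda$ is not Koszul; for Koszul $\Lambda$ one simply has $W_\Lambda=\sum_s a_{\rho_s}\rho_s$ with each $\rho_s$ quadratic, and the finiteness of $\Lambda$ guarantees only finitely many terms so that $\widehat{kQ_\Lambda}$ may in fact be replaced by $kQ_\Lambda$.

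It remains to identify $\Jac(Q_\Lambda,W_\Lambda)$ with $\Pi(\Lambda)$. Since $W_\Lambda$ is linear in the new arrows $a_{\rho_s}$, the cyclic derivative $\delta_{a_{\rho_s}}W_\Lambda$ has no new arrows, and one checks it is a lift to $\widehat{kQ}$ of a relation, so that these relations generate $I$ and killing all the $a_{\rho_s}$ in $\Jac(Q_\Lambda,W_\Lambda)$ recovers $\Lambda$; while for an old arrow $\beta\in Q_1$ the derivative $\delta_\beta W_\Lambda$ is linear in the $a_{\rho_s}$, and as $\beta$ ranges over $Q_1$ these elements are exactly the matrix entries of the transposed differential $\tilde d_2$ of Section \ref{sec:comp} (this is the point where the explicit form of $d_2$, equivalently of the $m_n$, is used). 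Grading $\Jac(Q_\Lambda,W_\Lambda)$ by the number of new arrows, the degree-$0$ part is $\Lambda$, the degree-$1$ part is $\bigoplus_s\Lambda e_{t(\rho_s)}\otimes_k e_{h(\rho_s)}\Lambda$ modulo the image of $\tilde d_2$, i.e.\ $\operatorname{coker}(\tilde d_2)=\Ext^2_{\Lambda^e}(\Lambda,\Lambda^e)\cong\Ext^2_\Lambda(D\Lambda,\Lambda)$ (using $P_3=0$), and there are no relations in higher degree. Thus $\Jac(Q_\Lambda,W_\Lambda)$ and $\Pi(\Lambda)=T_\Lambda\Ext^2_\Lambda(D\Lambda,\Lambda)$ have the same presentation by generators and relations, hence are isomorphic.

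The main obstacle is the step above for non-Koszul $\Lambda$: one must show that a single potential faithfully encodes the whole minimal bimodule differential $d_2$ — equivalently the family $(m_n)_{n\ge2}$ — and that the derivatives $\delta_{a_{\rho_s}}W_\Lambda$ genuinely reproduce the ideal $I$ rather than a perturbation of it. This requires the homological perturbation lemma to transport the $A_\infty$-structure between $\RHom_\Lambda(S,S)$ and $E$, together with careful sign and ordering bookkeeping relating the cyclic derivatives $\delta_a$ to the transposed differentials $\tilde d_\ell$ of Section \ref{sec:comp}. A conceptually cleaner but less elementary route, which I would use as a fallback, is Keller's original argument: show that the derived $3$-Calabi--Yau completion $\Pi_3(\Lambda)$ is quasi-isomorphic to the complete Ginzburg dg algebra $\widehat\Gamma(Q_\Lambda,W_\Lambda)$ for the above $W_\Lambda$, note that $\Ho^0\Pi_3(\Lambda)=\Pi(\Lambda)$ because $\gldim\Lambda\le2$, and take $\Ho^0$.
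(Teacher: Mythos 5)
This statement is not proved in the paper at all: it is quoted verbatim from Keller \cite{Kel11}, and the paper only records the resulting description of $Q_\Lambda$ and $W_\Lambda$ (new arrows $c_\rho$ and $W_\Lambda=\sum_\rho \rho c_\rho$) without argument. Your sketch is therefore not ``a different route from the paper'' but a reconstruction of the actual proof in the literature, and it follows Keller's strategy faithfully: minimal $A_\infty$-model of $\Ext^*_\Lambda(S,S)$ (equivalently the minimal bimodule resolution), a potential whose $n$-th order terms are the higher products $m_n$, and the identification of $\Jac(Q_\Lambda,W_\Lambda)$ with $T_\Lambda\Ext^2_\Lambda(D\Lambda,\Lambda)$ degree by degree; the fallback via the $3$-Calabi--Yau completion and the complete Ginzburg dg algebra is literally Keller's argument. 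Two remarks. First, the obstacle you flag in the non-Koszul case is genuine and is exactly the content of the published correction to \cite[Theorem~6.10]{Kel11}: the naive potential $\sum_\rho \rho c_\rho$ is in general only the leading term, and the higher-order corrections coming from the $m_n$ (transported by homological perturbation) are needed; since the theorem as stated here only asserts \emph{existence} of some $(Q_\Lambda,W_\Lambda)$, your corrected potential is the right object and the statement survives. For the Koszul case (which is all that is used elsewhere in this paper), your $W_\Lambda=\sum_s a_{\rho_s}\rho_s$ agrees with the paper's description and with Proposition~\ref{thm:quiver_construction}. Second, in the final step you should make explicit why ``there are no relations in higher degree'' forces $\Jac(Q_\Lambda,W_\Lambda)_d\cong(\Jac(Q_\Lambda,W_\Lambda)_1)^{\otimes_\Lambda d}$: the Jacobian ideal is generated by elements of degree $0$ (the $\delta_{a_{\rho_s}}W_\Lambda$, which recover $I$) and degree $1$ (the $\delta_\beta W_\Lambda$ for $\beta\in Q_1$, i.e.\ the entries of $\tilde d_2$ from Section~\ref{sec:comp}), and a quotient of a tensor algebra by an ideal generated in degrees $\le 1$ of an auxiliary grading is the tensor algebra over the degree-$0$ part on the degree-$1$ bimodule. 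With that observation and the perturbation bookkeeping you already identify as the hard point, the argument is complete.
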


Let $M$ be a minimal set of relations in $\Lambda$. The quiver of $Q_\Lambda$ is given by adding new arrows $c_\rho:i\to j$ for every relation $\rho:j\to i$ in $M$. The potential $W_{\Lambda}$ is given by 
\[
	W_{\Lambda} = \sum_{\rho\in M} \rho c_\rho.
\]
In particular, if $\Lambda$ is quadratic, then $\Pi(\Lambda)$ is quadratic as well. 

One important assumption for the main result of this section is that $\Pi(\Lambda)$ is a \emph{planar} quiver algebra with potential. In fact, we give at the end of this subsection an example of a $2$-hereditary quadratic monomial algebra whose preprojective algebra does not satisfy this property. We provide the definition here. 

\begin{definition}
Let $Q$ be a quiver without loops or $2$-cycles. An \emph{embedding} $\epsilon: Q\to \R^2$ is a map which is injective on the vertices, sends arrows $a:i\to j$ to the open line segment $l_a$ from $\epsilon(i)$ to $\epsilon(j)$, and satisfies 
\begin{itemize}
\item $\epsilon(i)\not\in l_a$ for every $i\in Q_0$ and $a\in Q_1$ and 
\item $l_a\cap l_b = \varnothing$ for all $a\not = b\in Q_1$. 
\end{itemize} 
The pair $(Q, \epsilon)$ is called a \emph{plane quiver}. A \emph{face} of $(Q, \epsilon)$ is a bounded component of $\R^2\setminus \epsilon(Q)$ which is an open polygon. 
\end{definition}

\begin{definition}
Let $(Q, \epsilon)$ be a plane quiver such that every bounded connected component of $\R^2 \setminus \epsilon(Q)$ is a face and the arrows bounding every face are cyclically oriented. The \emph{potential induced} from $(Q, \epsilon)$ is the linear combination $W$ of the bounding cycles of all faces. The quiver with potential $(Q, W)$ is called the \emph{planar QP induced from $(Q, \epsilon)$} and any quiver with potential obtained in this way is called a \emph{planar QP}.
\end{definition}

\begin{remark}
A quiver with potential whose underlying quiver is planar is not necessarily a planar QP. In fact, the planarity has to be compatible with the potential, that is, each face is bounded by an oriented cycle. 
\end{remark}

We can obtain algebras of global dimension at most $2$ from $\Pi$ by using cuts. In fact, let $(Q, W)$ be a quiver with potential and $\CC\subset Q_1$ be a subset. We define a grading $g_{\CC}$ on $Q$ by setting 
\[
	g_{\CC}(a):= \left\{
				\begin{array}{ll}
				1 & a\in \CC\\
				0 & a\not\in\CC
				
				\end{array}
			\right.
\]
for each $a \in Q_1$.
\begin{definition}
A subset $\CC\subset Q_1$ is called a \emph{cut} if $W$ is homogeneous of degree $1$ with respect to $g_{\CC}$.
\end{definition}

When $\CC$ is a cut, there is an induced grading on $\Jac(Q, W)$. We denote by $\Jac(Q, W)_{\CC}$ the degree $0$ part with respect to this grading.   

\begin{definition}
A cut $\CC$ is called \emph{algebraic} if it satisfies the following properties: 
\begin{enumerate}
\item $\Jac(Q, W)_{\CC}$ is a finite-dimensional $k$-algebra with global dimension at most two; 
\item $\{\delta_c W\}_{c\in \CC}$ is a minimal set of generators in the ideal $\langle \delta_c W\, |\, c\in \CC\rangle$.
\end{enumerate}
\end{definition}

All truncated Jacobian algebras $\Jac(Q, W)_{\CC}$ given by algebraic cuts $\CC$ are cluster equivalent \cite[Proposition 7.6]{HI11}. These are related to $2$-APR tilts \cite{IO11}. 

When $\Lambda$ is $2$-hereditary, $\Pi$ enjoys some additional characteristics. 

\begin{proposition}
Let $\Lambda$ be a $k$-algebra such that $\gldim \Lambda\leq 2$.
\begin{itemize}
\item \cite[Theorem 5.6]{HIO14} The following are equivalent.
	\begin{enumerate}
	\item $\Pi(\Lambda) = \Jac(Q, W)$ is a bimodule $3$-Calabi--Yau Jacobian algebra of Gorenstein parameter $1$;
	\item $\Pi(\Lambda)_{\CC}$ is a $2$-representation-infinite algebra for every cut $\CC\subset Q_1$. 
	\end{enumerate}
\item \cite[Proposition 3.9]{HI11} The following are equivalent. 
	\begin{enumerate}
	\item $\Pi(\Lambda) = \Jac(Q, W)$ is a finite-dimensional selfinjective Jacobian algebra;
	\item $\Pi(\Lambda)_{\CC}$ is a $2$-representation-finite algebra for every cut $\CC\subset Q_1$. 
	\end{enumerate}
\end{itemize}
\end{proposition}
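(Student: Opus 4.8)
The plan is to deduce both equivalences from three facts that are already at our disposal: (i) the characterisation, recalled in the preliminaries, of a global-dimension-$2$ algebra $\Gamma$ being $2$-representation-infinite by ``$\Pi(\Gamma)$ is a bimodule $3$-Calabi--Yau algebra of Gorenstein parameter $1$''; (ii) the characterisation of $\Gamma$ being $2$-representation-finite by ``$\Pi(\Gamma)$ is a finite-dimensional selfinjective algebra'', the converse direction here being available exactly because $\gldim\Gamma=2$; and (iii) the fact that, for an algebraic cut $\CC\subset Q_1$ of $(Q,W)=(Q_\Lambda,W_\Lambda)$, the truncated Jacobian algebra $\Gamma:=\Jac(Q,W)_\CC$ has $\gldim\Gamma\leq 2$ and its $3$-preprojective algebra recovers the original: $\Pi(\Gamma)\cong\Jac(Q,W)=\Pi(\Lambda)$. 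The first thing I would do is isolate (iii), which is the genuinely substantive input and which I would invoke from Keller's work \cite{Kel11} rather than reprove.

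Granting (iii), both bullets become formal. For the first: if $\Pi(\Lambda)=\Jac(Q,W)$ is bimodule $3$-Calabi--Yau of Gorenstein parameter $1$, then for each cut $\CC$ the algebra $\Gamma=\Jac(Q,W)_\CC$ has global dimension $2$ and $\Pi(\Gamma)\cong\Jac(Q,W)$, whence $\Gamma$ is $2$-representation-infinite by (i); conversely, if $\Pi(\Lambda)_\CC$ is $2$-representation-infinite for some (and hence, once a cut exists, every) cut, then $\Jac(Q,W)\cong\Pi(\Pi(\Lambda)_\CC)$ is bimodule $3$-Calabi--Yau of Gorenstein parameter $1$, again by (i). The second bullet is the verbatim analogue, replacing ``bimodule $3$-Calabi--Yau of Gorenstein parameter $1$'' by ``finite-dimensional selfinjective'' and ``$2$-representation-infinite'' by ``$2$-representation-finite'', and using (ii) in place of (i).

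I expect the principal obstacle to lie in (iii) and in the surrounding bookkeeping rather than in the assembly. On the one hand one must justify that taking a cut and then forming the preprojective algebra returns the \emph{same} quiver with potential $(Q,W)$, not merely a derived-equivalent algebra; on the other hand the quantifier ``for every cut'' is only meaningful once one knows that an algebraic cut exists, since otherwise the left-hand conditions would be vacuously forced. In the planar situation that the rest of this section is concerned with, existence of a cut is automatic --- the grading attached to a face of the plane quiver produces one --- so this subtlety causes no trouble in our applications; in general it must be carried as a hypothesis. Once (iii) and the existence of a cut are granted, nothing beyond the already-cited equivalences is needed.
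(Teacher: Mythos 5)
First, a remark on the comparison: the paper does not prove this proposition at all --- it is quoted verbatim from \cite[Theorem 5.6]{HIO14} and \cite[Proposition 3.9]{HI11} --- so there is no in-paper argument to measure you against, only the question of whether your reconstruction stands on its own. Your reduction (combine ``$\Gamma$ is $2$-RI iff $\Pi(\Gamma)$ is bimodule $3$-Calabi--Yau of Gorenstein parameter $1$'' and ``$\Gamma$ is $2$-RF iff $\Pi(\Gamma)$ is finite-dimensional selfinjective, the converse using $\gldim\Gamma=2$'' with the fact that $\Pi\bigl(\Jac(Q,W)_{\CC}\bigr)\cong\Jac(Q,W)$ for algebraic cuts) is indeed how these equivalences are organised in the sources. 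The worry you raise about existence of cuts is moot in this setting: since $(Q,W)=(Q_\Lambda,W_\Lambda)$ arises from Keller's construction, the set $\CC_\Lambda$ of added arrows $c_\rho$ is always a cut with $\Pi(\Lambda)_{\CC_\Lambda}=\Lambda$, and applying your facts (i)/(ii) to this canonical cut already yields the implications $(2)\Rightarrow(1)$ cleanly.

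The genuine gap is in $(1)\Rightarrow(2)$. Your fact (iii) is stated, correctly, only for \emph{algebraic} cuts, but in the assembly you apply it to an arbitrary cut $\CC$. For a non-algebraic cut, $\Jac(Q,W)_{\CC}$ may fail to have global dimension at most $2$, or $\{\delta_cW\}_{c\in\CC}$ may fail to be a minimal generating set of its relation ideal; in either case $\Pi\bigl(\Jac(Q,W)_{\CC}\bigr)$ need not recover $(Q,W)$ and you cannot conclude that $\Jac(Q,W)_{\CC}$ is $2$-RI (resp.\ $2$-RF). What is missing is precisely the statement that when $\Jac(Q,W)$ is bimodule $3$-Calabi--Yau of Gorenstein parameter $1$ (resp.\ finite-dimensional selfinjective), \emph{every} cut is automatically algebraic. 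This is a separate, non-formal ingredient --- it is \cite[Proposition 3.10]{HI11}, restated in this paper as Proposition \ref{prop:cut_alg} --- whose proof uses the exact sequences (\ref{eq:exact_cy}) and (\ref{eq:exact_si}) rather than the bookkeeping in your (iii). With that ingredient supplied, your assembly goes through.
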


This characterisation allows us to work with the following exact sequences. 

\begin{theorem}
\label{thm:exact_si}
Let $\Pi = \Jac(Q, W)$ be a Jacobian algebra. 
\begin{itemize}
\item {\cite[Proof of Theorem 3.1]{Boc08}} $\Pi$ is $3$-Calabi--Yau if and only if the following complex of left $\Pi$-modules is exact for every simple module $S_i$: 
\begin{equation}
\label{eq:exact_cy}
	0\to P_i\xrightarrow{[a]} \bigoplus_{\substack{a\in Q_1\\ h(a) = i}} P_{t(a)}\xrightarrow{[\delta_{(a,b)}W]}\bigoplus_{\substack{b\in Q_1\\ t(b) = i}} P_{h(b)}\xrightarrow{[b]} P_i\to S_i \to 0,
\end{equation}
where $P_j:= \Pi e_j$ and $\delta_{(a,b)}W:= \delta^{\Le}_a\circ\delta^{\Ri}_bW$. 
\item {\cite[Theorem 3.7]{HI11}} $\Pi$ is selfinjective if and only if it is finite-dimensional and the following complex of left $\Pi$-modules is exact for every simple module $S_i$: 
\begin{equation}
\label{eq:exact_si}
	P_i\xrightarrow{[a]} \bigoplus_{\substack{a\in Q_1\\ h(a) = i}} P_{t(a)}\xrightarrow{[\delta_{(a,b)}W]}\bigoplus_{\substack{b\in Q_1\\ t(b) = i}} P_{h(b)}\xrightarrow{[b]} P_i\to S_i \to 0.
\end{equation}

\end{itemize}
\end{theorem}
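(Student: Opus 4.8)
Since the statement only records theorems of Bocklandt \cite{Boc08} and of Herschend--Iyama \cite{HI11}, the plan is to recall the mechanism behind them rather than to reprove them from scratch. In both parts one attaches to $(Q,W)$ a canonical complex of projective $\Pi$-bimodules $\mathbb P_\bullet$ -- the truncation in homological degrees $0$ through $3$ of the bimodule underlying the Ginzburg dg-algebra of $(Q,W)$ -- and then identifies the displayed complexes of left $\Pi$-modules with its specialisations $\mathbb P_\bullet\otimes_{\Pi}S_i$.

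\textbf{First bullet.} Let $\mathbb P_\bullet$ be the complex of projective $\Pi$-bimodules with
\[
\mathbb P_3 = \mathbb P_0 = \bigoplus_{v\in Q_0}\Pi e_v\otimesk e_v\Pi,\qquad \mathbb P_1 = \bigoplus_{a\in Q_1}\Pi e_{h(a)}\otimesk e_{t(a)}\Pi,\qquad \mathbb P_2 = \bigoplus_{a\in Q_1}\Pi e_{t(a)}\otimesk e_{h(a)}\Pi,
\]
augmented over $\Pi$ by multiplication $\mu\colon\mathbb P_0\to\Pi$, with differential $\mathbb P_1\to\mathbb P_0$ sending the $a$-generator to $a\otimes e_{t(a)}-e_{h(a)}\otimes a$, differential $\mathbb P_2\to\mathbb P_1$ assembled from the double cyclic derivatives $\delta_{(a,b)}W$ of the statement, and differential $\mathbb P_3\to\mathbb P_2$ again built from the arrows; these can be chosen so that $\mathbb P_\bullet$ is self-dual, i.e.\ $\Hom_{\Pi^e}(-,\Pi^e)$ sends $\mathbb P_\bullet$ to itself, reversed and shifted by $3$. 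By construction of $\Jac(Q,W)$ (using that $W$ is a potential) the portion $\mathbb P_1\to\mathbb P_0\to\Pi\to 0$ is exact and $\operatorname{im}(\mathbb P_2\to\mathbb P_1)=\ker(\mathbb P_1\to\mathbb P_0)$, so only exactness at $\mathbb P_2$ and $\mathbb P_3$ remains in question. Self-duality shows that if $\mathbb P_\bullet$ is exact then it is a projective bimodule resolution of $\Pi$ which is its own $\Hom_{\Pi^e}(-,\Pi^e)[-3]$, so $\RHom_{\Pi^e}(\Pi,\Pi^e)\cong\Pi[-3]$, i.e.\ $\Pi$ is $3$-Calabi--Yau; conversely, the $3$-Calabi--Yau condition forces $\projdim_{\Pi^e}\Pi=3$, and a dimension shift against the duality forces the remaining homology of $\mathbb P_\bullet$ to vanish. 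Finally, a Nakayama-lemma argument -- applicable because all terms are finitely generated over the complete path algebra -- shows that such a bounded complex of finitely generated projective bimodules with $\Pi$ in the bottom degree is exact as soon as $\mathbb P_\bullet\otimes_{\Pi}S_i$ has homology only in degree $0$ for every $i$; and $\mathbb P_\bullet\otimes_{\Pi}S_i$ collapses the right tensor factors and, after identifying projective summands, is precisely the complex (\ref{eq:exact_cy}). This gives the first bullet.

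\textbf{Second bullet.} A non-semisimple selfinjective algebra has infinite global dimension, so here $\Pi$ is not $3$-Calabi--Yau and $\mathbb P_\bullet$ is not a resolution. Instead, when $\Pi$ is finite-dimensional, selfinjectivity is equivalent (this is \cite[Theorem 3.7]{HI11}) to $\Pi$ being twisted $3$-periodic, $\Omega^3_{\Pi^e}\Pi\cong{}_{1}\Pi_{\nu}$ for a Nakayama-type automorphism $\nu$; equivalently, $\mathbb P_\bullet$ is exact everywhere \emph{except} possibly at the leftmost term $\mathbb P_3$, its differential $\mathbb P_3\to\mathbb P_2$ no longer being injective -- the failure being absorbed by the twist $\nu$, under which $\mathbb P_\bullet$ extends to a periodic bimodule resolution. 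This is exactly why the leftmost $P_i$ in (\ref{eq:exact_si}) carries no preceding $0\to$, in contrast with (\ref{eq:exact_cy}). Specialising along the $S_i$ as before converts this into the exactness of (\ref{eq:exact_si}) for all $i$, the finite-dimensionality hypothesis being carried along in both directions.

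\textbf{Main obstacle.} The crux is the Bocklandt half: proving that this Ginzburg-type bimodule complex is genuinely self-dual with the correct cyclic-derivative signs, and that the $3$-Calabi--Yau property is equivalent to its exactness -- the technical core of \cite{Boc08} -- together with the finiteness bookkeeping needed to pass between $\mathbb P_\bullet$ and its specialisations at the simples. Granting the Calabi--Yau case, the selfinjective statement is the analogous twisted-periodic computation carried out in \cite[Theorem 3.7]{HI11}.
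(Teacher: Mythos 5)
The paper offers no proof of this statement: it is imported verbatim from the literature, with the two bullets attributed to \cite[Proof of Theorem 3.1]{Boc08} and \cite[Theorem 3.7]{HI11} respectively, so there is no internal argument to compare yours against. Your sketch of the first bullet is essentially the standard one: the degree-$\leq 3$ truncation of the Ginzburg bimodule complex, its self-duality under $\Hom_{\Pi^e}(-,\Pi^e)$ up to a shift by $3$, the observation that exactness at the two right-hand bimodule terms is automatic from the presentation of $\Jac(Q,W)$ by the cyclic derivatives, and the reduction of exactness of the bimodule complex to exactness of its specialisations at the simples (which does require the completeness/gradedness of $\widehat{kQ}$ for the Nakayama-lemma step, as you note). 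That is a faithful account of how the cited result is proved.

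Your second bullet is the one place where the mechanism you describe is not the one in the cited source, and where you lean on a statement stronger than what is needed. Reducing selfinjectivity of $\Pi$ to twisted $3$-periodicity $\Omega^3_{\Pi^e}\Pi\cong{}_{1}\Pi_{\nu}$ is not the argument of \cite[Theorem 3.7]{HI11}; that equivalence is a substantially deeper fact than the theorem being quoted, and invoking it here risks circularity. The proof in \cite{HI11} is more elementary: the complex (\ref{eq:exact_si}) is always exact at its three right-hand terms, being the start of the minimal projective resolution of $S_i$ cut out by the relations $\delta_aW$; applying $\Hom_\Pi(-,\Pi)$ and using that $W$ is a potential identifies the dual complex with the corresponding complex for $S_i$ over $\Pi^{\op}$; and exactness at the remaining term $\bigoplus_{h(a)=i}P_{t(a)}$ is then shown to be equivalent to each $P_i$ being injective (equivalently, to $D(e_i\Pi)$ being projective), which is selfinjectivity. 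Note also that, unlike in the Calabi--Yau case, the map $[a]$ need not be injective here, which is exactly why (\ref{eq:exact_si}) has no leading $0\to$; your explanation of that asymmetry is correct in spirit even if the periodicity framing overstates what \cite{HI11} proves.
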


We also need a couple of additional definitions to treat the case when $\Lambda$ is in addition a quadratic monomial algebra. Let $\Pi$ be a Jacobian algebra with potential $W$. We say that $\Pi$ admits a \emph{monomial} \emph{cut} $\CC$ if $\Pi_{\CC}$ is a monomial algebra. Also, an arrow $a$ in the quiver of $\Pi$ is called a \emph{border} if $a$ is part of exactly one summand of $W$. It is clear that if $\Pi$ is quadratic, then $W$ is a sum of cyclic paths of length $3$, since it is homogeneous of degree $1$. We call those summands \emph{triangles}. By ideas similar to Lemma \ref{lem:deltaK_n=K_{n-1}}, every arrow is part of at least one triangle.

The following lemma is clear. 

\begin{lemma}
A cut $\CC$ is monomial if and only if the arrows in degree $1$ are borders. 
\end{lemma}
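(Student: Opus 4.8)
The plan is to unwind the definition of a cut and translate it into a condition on the summands of the potential. Recall that $\Pi$ is quadratic, so by the discussion above $W$ is a $k$-linear combination of cyclic paths of length $3$ (the ``triangles''), say $W = \sum_{t} \lambda_t T_t$ with each $T_t$ a $3$-cycle and $\lambda_t \in k^\times$. First I would observe that the grading $g_\CC$ assigns to a path $p = a_m \cdots a_1$ the integer $g_\CC(p) = \#\{i \mid a_i \in \CC\}$, so a triangle $T = a_3 a_2 a_1$ has $g_\CC(T) = \#(\{a_1,a_2,a_3\} \cap \CC)$. By definition $\CC$ is a cut precisely when $W$ is homogeneous of degree $1$ for $g_\CC$, i.e.\ every triangle $T_t$ appearing in $W$ contains \emph{exactly one} arrow of $\CC$.

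Next I would compute the truncated Jacobian algebra $\Jac(Q,W)_\CC$, the degree-$0$ part. The relations of $\Jac(Q,W)$ are $\delta_a W$ for $a \in Q_1$; for $a \notin \CC$ the relation $\delta_a W$ is a combination of (length-$2$) paths $p$ with $g_\CC(ap) = 1$, hence $g_\CC(p) = 1$, so these relations are not homogeneous of degree $0$ and are killed on passing to the degree-$0$ part after we also remove the degree-$1$ arrows. The arrows of $\Jac(Q,W)_\CC$ are exactly the arrows of degree $0$, namely $Q_1 \setminus \CC$, and its relations are the $\delta_c W$ for $c \in \CC$ (each such $\delta_c W$ being a combination of paths of degree $0$, i.e.\ using only arrows outside $\CC$). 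So $\Jac(Q,W)_\CC$ is monomial if and only if each relation $\delta_c W$, $c \in \CC$, is (up to scalar) a single path rather than a genuine linear combination of two or more distinct paths.

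Then I would relate this to borders. The relation $\delta_c W = \delta_c \big(\sum_t \lambda_t T_t\big)$ picks up one term for each triangle $T_t$ that contains the arrow $c$: if $T_t = q_t c$ (cyclically, so $q_t$ is the length-$2$ path completing the cycle), then $\delta_c W = \sum_{t : c \mid T_t} \lambda_t q_t$, and the paths $q_t$ for distinct triangles through $c$ are distinct. Hence $\delta_c W$ is a single monomial exactly when $c$ lies in exactly one triangle of $W$, i.e.\ exactly when $c$ is a border. Therefore $\Jac(Q,W)_\CC$ is monomial $\iff$ every arrow of $\CC$ is a border, which is the claim. The routine points to check are that two distinct triangles through a common arrow $c$ really do give distinct completing paths $q_t$ (immediate, since the triangle is determined by $c$ together with $q_t$), and that no cancellation occurs in $\delta_c W$ (immediate, since the $\lambda_t$ are nonzero and the $q_t$ are distinct basis paths); there is no real obstacle here, the statement being essentially a bookkeeping translation, so I would keep the write-up to a few lines. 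The only mild subtlety worth a sentence is making explicit that passing to the degree-$0$ part indeed deletes precisely the arrows in $\CC$ and the relations $\delta_a W$ with $a \notin \CC$, leaving exactly the data described.
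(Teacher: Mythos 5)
Your argument is correct and is precisely the unwinding the authors intend: the paper offers no proof, introducing the statement with ``the following lemma is clear.'' The only point deserving an extra word in a careful write-up is the ``only if'' direction, since by the paper's definition ``monomial'' means the ideal \emph{can} be generated by paths, not that the given generators are paths --- but the relations $\delta_c W$, $c\in\CC$, have pairwise disjoint supports (two triangles completing to the same length-two path would force parallel arrows), so their span is spanned by paths only if each $\delta_c W$ is itself a scalar multiple of a single path, and your conclusion stands.
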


In particular, the existence of a monomial cut in $\Pi$ implies that there is at least one border in each summand. An important step in our classification proof is to show that there is exactly one border, unless there is only one summand.  

The following lemma is elementary, but we include a proof for the convenience of the reader.

\begin{lemma}
\label{lem:mat_indec}
Let $\Pi = \Jac(Q,W)$ be a Jacobian algebra which is either selfinjective or Calabi--Yau. The matrix $[\delta_{(a,b)}W]$ in the complexes (\ref{eq:exact_cy}) and (\ref{eq:exact_si}) is indecomposable, that is, it is not similar to a block matrix. 
\end{lemma}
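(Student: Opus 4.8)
The plan is to argue by contradiction: suppose the middle matrix $[\delta_{(a,b)}W]$ in the complex (\ref{eq:exact_cy}) or (\ref{eq:exact_si}) were similar to a block matrix, and derive a contradiction with the exactness of that complex, using the fact that $\Pi$ is ring-indecomposable because $Q$ is connected. First I would set up notation: write $P^{(1)} := \bigoplus_{h(a)=i} P_{t(a)}$ and $P^{(2)} := \bigoplus_{t(b)=i} P_{h(b)}$, so that the middle map is $\phi := [\delta_{(a,b)}W] : P^{(1)} \to P^{(2)}$, flanked by the maps $[a]$ and $[b]$. A block decomposition of $\phi$ means there are nontrivial direct-sum decompositions $P^{(1)} = P^{(1)}_X \oplus P^{(1)}_Y$ and $P^{(2)} = P^{(2)}_X \oplus P^{(2)}_Y$, indexed by a partition of the incoming arrows $\{a : h(a)=i\}$ into two nonempty sets $X \sqcup Y$ and similarly for the outgoing arrows, such that $\phi$ restricts to maps $P^{(1)}_X \to P^{(2)}_X$ and $P^{(1)}_Y \to P^{(2)}_Y$.

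The key step is to show that such a block structure forces the coefficient $\delta_{(a,b)}W$ to vanish whenever $a \in X$ and $b \in Y$ (or vice versa), and then to translate this into a statement about the potential $W$. Concretely, $\delta_{(a,b)}W \ne 0$ in $\Pi$ exactly when some summand of $W$ contains a subpath of the form $a \cdots b$ passing through $i$ (i.e. $h(a) = i = t(b)$ and $ab$ or $a(\text{path})b$ appears cyclically in $W$). So a block decomposition at vertex $i$ says: no summand of $W$ pairs an incoming arrow from $X$ with an outgoing arrow from $Y$, and conversely. This partitions the arrows through $i$ — and by chasing along the summands of $W$ (each cyclic term of $W$ links an incoming arrow at $i$ to an outgoing arrow at $i$), one propagates the partition to a partition of all arrows and vertices of $Q$ into two parts with no summand of $W$ straddling them. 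Combined with the fact (noted just before this lemma, "every arrow is part of at least one triangle," and more generally every arrow appears in $W$) that $W$ involves every arrow, this yields a decomposition $W = W_1 + W_2$ with $Q = Q_1 \sqcup Q_2$ a disjoint union of subquivers and $W_j$ supported on $Q_j$; hence $\Pi = \Jac(Q_1, W_1) \times \Jac(Q_2, W_2)$ is a nontrivial product of algebras. This contradicts the standing assumption that $Q$ is connected, so $\Pi$ is ring-indecomposable.

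There is, however, a gap in the naive version of this argument: exactness of the complex only guarantees that $\ker[b] = \operatorname{im}\phi$, $\ker\phi = \operatorname{im}[a]$, etc., but a priori a block form of $\phi$ need not be incompatible with the flanking maps $[a]$ and $[b]$ unless those also respect the decomposition. So the main obstacle — and the place I expect to spend the real work — is showing that a block decomposition of $\phi$ alone (not a priori compatible with $[a]$ and $[b]$) already yields the decomposition of $W$. I would handle this by working directly with the entries: the entry of $\phi$ from the $a$-summand to the $b$-summand is precisely $\delta_{(a,b)}W$ (an element of $e_{h(b)}\Pi e_{t(a)}$), and similarity to a block matrix forces, after conjugating, that the off-block entries $\delta_{(a,b)}W$ vanish for $a\in X$, $b\in Y$; since these entries are honest elements of the quiver algebra controlled by $W$, their vanishing is equivalent to the combinatorial straddling condition on $W$ above, independently of what $[a]$ and $[b]$ do. (One should check the conjugation doesn't create cancellations — but since the source and target of $\phi$ are sums of distinct indecomposable projectives indexed by arrows, the block structure is essentially rigid and the relevant entries are determined up to scalars.) Once the off-block coefficients of $W$ vanish, the propagation-and-connectedness argument closes the proof.
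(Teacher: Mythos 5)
There is a genuine gap, and it sits exactly where you locate the ``real work'': the propagation step does not work, and as a consequence your argument never uses the hypothesis that $\Pi$ is selfinjective or Calabi--Yau, which is indispensable. A block decomposition of $[\delta_{(a,b)}W]$ at the single vertex $i$ only partitions the summands of $W$ that pass through $i$, according to which incoming/outgoing arrows at $i$ they use. It imposes no constraint whatsoever away from $i$: two summands lying in different blocks may share arrows and vertices elsewhere, and in any case they always share the vertex $i$ itself, so you never obtain a disjoint union $Q = Q_1 \sqcup Q_2$ nor a product decomposition of $\Pi$. Concretely, take two oriented $3$-cycles glued at the single vertex $i$, with arrows $a_j\colon u_j\to i$, $b_j\colon i\to v_j$, $c_j\colon v_j\to u_j$ and $W = c_1b_1a_1 + c_2b_2a_2$. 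Here $Q$ is connected, every arrow occurs in $W$, and the matrix at $i$ is $\left[\begin{smallmatrix} c_1 & 0 \\ 0 & c_2\end{smallmatrix}\right]$, which is decomposable; your chain of implications would run to a contradiction, yet nothing is contradictory about this quiver with potential --- it simply fails to be selfinjective or $3$-Calabi--Yau (the projective $P_i$ has two-dimensional socle spanned by $c_1b_1$ and $c_2b_2$). So block-decomposability at a vertex cannot be refuted by connectedness of $Q$ alone; it can only be refuted by the exactness of the complexes (\ref{eq:exact_cy}) and (\ref{eq:exact_si}).

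The paper's proof is much more direct and does use exactness: writing the complex in block form with incoming arrows split as $[a'],[a'']$ and outgoing as $[b'],[b'']$, the element $([0],[a''])\in \bigoplus_{h(a)=i}P_{t(a)}$ is killed by the block-diagonal middle map (since the full image $([a'],[a''])$ of $e_i$ is, and the blocks act independently), but it is not in the image of $\left[\begin{smallmatrix}[a']\\ [a'']\end{smallmatrix}\right]$; this contradicts exactness at the second term. If you want to salvage your approach, you would have to feed the exact sequence back in at this point --- at which stage you have essentially reconstructed the paper's argument.
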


\begin{proof}
Suppose by contradiction that the complexes can be written as 
\[
	P_i\xrightarrow{\begin{bmatrix}[a'] \\
 [a'']\end{bmatrix}} \bigoplus_{\substack{a\in Q_1\\ h(a) = i}} P_{t(a)}\xrightarrow{\begin{bmatrix}
[\delta_{(a',b')}W] & [0] \\
[0] & [\delta_{(a'',b'')}W] 
\end{bmatrix}}\bigoplus_{\substack{b\in Q_1\\ t(b) = i}} P_{h(b)}\xrightarrow{\begin{bmatrix} [b'] & [b'']\end{bmatrix}} P_i\to S_i \to 0,
\]
for some vectors of arrows $[a'], [a''], [b'], [b'']$. Then the element 
$([0], [a''])\in \bigoplus_{\substack{a\in Q_1\\ h(a) = i}} P_{t(a)}$ 
is a cycle which is not a boundary, contradicting the exactness of the complex. 
\end{proof}

Using this, we now show that we can exclude an important class of examples, namely those coming from truncated Nakayama algebras.

\begin{lemma}
\label{lem:every_arrow_border}
Let $\Pi := \Jac(Q, W) $ be a Jacobian algebra which is either selfinjective or Calabi--Yau. Suppose that there exists a summand $W'$ of $W$ in which every arrow is a border. Then $\Pi$ is given by the quiver 

\begin{center}
\begin{tikzpicture}[auto, scale = 0.5]
	\node[circle, draw, thin,fill=black!100, scale=0.4] (0) at (1,2.4) {};
	\node[circle, draw, thin,fill=black!100, scale=0.4] (7) at (-1,2.4) {};
	\node[circle, draw, thin,fill=black!100, scale=0.4] (3) at (1,-2.4) {};
	\node[circle, draw, thin,fill=black!100, scale=0.4] (1) at (2.4,1) {};
	\node[circle, draw, thin,fill=black!100, scale=0.4] (2) at (2.4,-1) {};
	\node[circle, draw, thin,fill=black!100, scale=0.4] (6) at (-2.4,1) {};
	\node[circle, draw, thin,fill=black!100, scale=0.4] (5) at (-2.4,-1) {};
	\node[fill=none, scale=2] (8) at (0,0) {$\circlearrowright$};
	\draw[decoration={markings,mark=at position 1 with {\arrow[scale=2]{>}}},
    postaction={decorate}, shorten >=0.4pt] (0) to (1);
	\draw[decoration={markings,mark=at position 1 with {\arrow[scale=2]{>}}},
    postaction={decorate}, shorten >=0.4pt] (1) to (2);
    	\draw[decoration={markings,mark=at position 1 with {\arrow[scale=2]{>}}},
    postaction={decorate}, shorten >=0.4pt] (2) to (3);
        	\draw[decoration={markings,mark=at position 1 with {\arrow[scale=2]{>}}},
    postaction={decorate}, shorten >=0.4pt] (5) to (6);
        	\draw[decoration={markings,mark=at position 1 with {\arrow[scale=2]{>}}},
    postaction={decorate}, shorten >=0.4pt] (6) to (7);
       \draw[decoration={markings,mark=at position 1 with {\arrow[scale=2]{>}}},
    postaction={decorate}, shorten >=0.4pt] (7) to (0);
    \draw [dashed] (3) to [bend left = 30] (5);

\end{tikzpicture},
\end{center}
with potential the one obtained by summing over every cyclic rotation of the complete cycle.
\end{lemma}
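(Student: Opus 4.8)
The plan is to run the exactness of the complexes (\ref{eq:exact_cy}) and (\ref{eq:exact_si}) at the vertices lying on the cycle $W'$, together with the indecomposability result Lemma \ref{lem:mat_indec}. Write $W'$ as an oriented cycle $\alpha_\ell\cdots\alpha_1$ with $\alpha_t\colon v_{t-1}\to v_t$, the indices of the $v$'s read modulo $\ell$; by hypothesis each $\alpha_t$ is a border, so it occurs in no summand of $W$ other than $W'$. A first, similar, observation is that $W'$ cannot revisit a vertex: if it did, then at such a vertex $v$ several arrows of $W'$ enter and leave, paired up by the visits of $W'$, and since these arrows are borders one checks (by inspecting the rotations of $W'$) that the matrix $[\delta_{(a,b)}W]$ at $v$ decomposes, after permuting rows and columns, into at least two nonzero blocks that do not couple to anything else, contradicting Lemma \ref{lem:mat_indec}. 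So the $v_t$ are pairwise distinct, $\alpha_t$ is the only arrow of $W'$ into $v_t$, and $\alpha_{t+1}$ the only one out of $v_t$.

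Now I would fix $t$ and examine $[\delta_{(a,b)}W]$ in (\ref{eq:exact_cy})/(\ref{eq:exact_si}) at $v_t$: its rows are indexed by the arrows $b$ with $t(b)=v_t$, its columns by the arrows $a$ with $h(a)=v_t$, and the $(b,a)$-entry is $\delta_{(a,b)}W=\delta^{\mathcal L}_a\delta^{\mathcal R}_bW$, which collects the subpaths strictly between $a$ and $b$ over those terms of $W$ that can be written with $b$ first and $a$ last. Since $\alpha_t$ and $\alpha_{t+1}$ are borders, only rotations of $W'$ can contribute to the row $\alpha_{t+1}$ or to the column $\alpha_t$, and as $W'$ is a simple cycle there is a unique such rotation for each; inspecting it shows that the row $\alpha_{t+1}$ is supported on the single column $\alpha_t$ and the column $\alpha_t$ on the single row $\alpha_{t+1}$. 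Hence $(\{\alpha_{t+1}\},\{\alpha_t\})$ is a block of $[\delta_{(a,b)}W]$ not coupled to any other row or column, and the $\alpha_t$-component of the left-hand map $[a]$ is right multiplication by the arrow $\alpha_t\ne 0$. Let $X_t$ (resp.\ $Y_t$) be the set of arrows into (resp.\ out of) $v_t$ distinct from $\alpha_t$ (resp.\ $\alpha_{t+1}$). If $X_t$ and $Y_t$ are both nonempty then $[\delta_{(a,b)}W]$ is, after reordering, a block matrix with a second genuinely nonempty block $Y_t\times X_t$, contradicting Lemma \ref{lem:mat_indec}. If, say, $X_t=\varnothing$ but some $\gamma\colon v_t\to w$ lies in $Y_t$ (the opposite case being dual), then I would invoke that every arrow of $Q$ occurs in at least one summand of $W$ — which holds by the argument sketched just before Lemma \ref{lem:mat_indec}, in the manner of Lemma \ref{lem:deltaK_n=K_{n-1}} — and pick a summand $W''$ containing $\gamma$; then $W''\ne W'$ since $\gamma\notin W'$, but $W''$ is a cyclic word passing through $v_t$ and so contains an arrow into $v_t$, which by $X_t=\varnothing$ must be $\alpha_t$, contradicting that $\alpha_t$ is a border. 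Thus $X_t=Y_t=\varnothing$: the only arrows incident to $v_t$ are $\alpha_t$ and $\alpha_{t+1}$.

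It then follows that $\{v_0,\dots,v_{\ell-1}\}$ together with the arrows $\alpha_1,\dots,\alpha_\ell$ is a full connected component of $Q$ — the oriented $\ell$-cycle — and, $Q$ being connected, this is all of $Q$. Since every arrow of $Q$ is then a border occurring only in $W'$, the potential $W$ has $W'$ as its unique summand, i.e.\ $W$ is a nonzero scalar multiple of the cyclic path $\alpha_\ell\cdots\alpha_1$ running once around $Q$ — equivalently, the sum over all $\ell$ cyclic rotations of the complete cycle — which is the asserted form.

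The step I expect to be the main obstacle is the explicit computation at a vertex: reading off, purely from the "border" hypothesis, exactly which entries of $[\delta_{(a,b)}W]$ vanish, handling the repeated-vertex situation, and making the appeal to Lemma \ref{lem:mat_indec} watertight — in particular ensuring one obtains two genuinely nonempty blocks, which is precisely why the subcase with a single extra arrow at a vertex needs the separate input that every arrow appears in the potential.
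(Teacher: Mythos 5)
Your proof is correct and follows essentially the same route as the paper's: at each vertex of the cycle $W'$, the border hypothesis forces the column of the incoming arrow and the row of the outgoing arrow of $W'$ to pair off into a $1\times 1$ diagonal block of $[\delta_{(a,b)}W]$, so Lemma \ref{lem:mat_indec} forces the whole matrix to be $1\times 1$ at every such vertex, whence the cycle is all of $Q$ and $W=W'$. Your extra care with the degenerate case where exactly one of the complementary index sets is empty (resolved via the fact that every arrow occurs in some summand of $W$) patches a step the paper's proof passes over silently, but the argument is otherwise the same.
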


\begin{proof}
Let $W' = x_n\cdots x_1$ be the summand in which every arrow is a border. By Lemma \ref{lem:mat_indec}, the matrix 
\[
	[\delta_{(a,b)}W]: \bigoplus_{\substack{a\in Q_1\\ h(a) = h(x_n)}} P_{t(a)}\rightarrow\bigoplus_{\substack{b\in Q_1\\ t(b) = h(x_n)}} P_{t(b)},
\]
is indecomposable. However, the column 
\[
	[\delta_{(a,x_1)}W]_{h(a) = h(x_n)}
\]
and the row 
\[
	[\delta_{(x_n,b)}W]_{t(b) = h(x_n)}
\]
each only have one non-zero element, since $x_1$ and $x_n$ are borders. Thus, $[\delta_{(a,b)}W] $ can only be indecomposable if its dimension is $1\times 1$. This means that $x_n$ is the only arrow ending at $h(x_n)$ and $x_1$ is the only arrow starting at $h(x_n)$. Repeating the argument with $x_1$, $x_2$, $\ldots$, $x_{n-1}$, we deduce that there is also only one arrow ending and one starting at $h(x_i)$, for $i =1,\ldots, n-1$, as well. Since the quiver $Q$ is connected, $\Pi$ must be given by the QP described in the statement. 
\end{proof}

Now assume that $\Pi$ is the preprojective algebra of a $2$-hereditary algebra. If $\Pi$ admits a monomial cut, then we show that summands of the potential cannot have two borders either. For this, we need the following proposition. It is shown in \cite{HI11} in the case where $\Jac(Q, W)$ is a selfinjective algebra, but the same proof also works in the case when $\Jac(Q, W)$ is a $3$-Calabi--Yau algebra. 

\begin{proposition}[{\cite[Proposition 3.10]{HI11}}]
\label{prop:cut_alg}
Let $\Jac(Q, W)$ be a preprojective algebra over a $2$-hereditary algebra. Then every cut $\CC\subset Q_1$ is algebraic. 
\end{proposition}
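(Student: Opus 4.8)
The plan is to follow the argument of \cite[Proposition 3.10]{HI11}, observing that its only essential input is the exact complex of Theorem \ref{thm:exact_si}, which is available both when $\Jac(Q,W)$ is selfinjective and when it is bimodule $3$-Calabi--Yau of Gorenstein parameter $1$ --- this is exactly why ``the same proof works''. Write $\Pi:=\Jac(Q,W)$ and let $\Lambda$ be the $2$-hereditary algebra with $\Pi(\Lambda)\cong\Pi$. By the $2$-RF/$2$-RI dichotomy, $\Lambda$ is either $2$-representation-finite, whence $\Pi$ is finite-dimensional selfinjective, or $2$-representation-infinite, whence $\Pi$ is bimodule $3$-Calabi--Yau of Gorenstein parameter $1$. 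In both cases Theorem \ref{thm:exact_si} (Bocklandt's criterion \cite{Boc08}) supplies, for every vertex $i$, an exact complex of left $\Pi$-modules
\[
	P_i\xrightarrow{[a]} \bigoplus_{\substack{a\in Q_1\\ h(a) = i}} P_{t(a)}\xrightarrow{[\delta_{(a,b)}W]}\bigoplus_{\substack{b\in Q_1\\ t(b) = i}} P_{h(b)}\xrightarrow{[b]} P_i\to S_i \to 0 ,
\]
and since $Q$ has no loops or $2$-cycles all differentials are radical maps, so this is a minimal projective resolution of $S_i$ over $\Pi$.

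Fix a cut $\CC\subset Q_1$ and set $\Lambda':=\Pi_{\CC}$. Condition (1) of algebraicity is immediate: by the characterisation of selfinjective and $3$-Calabi--Yau preprojective algebras recalled just before Theorem \ref{thm:exact_si}, $\Lambda'$ is $2$-representation-finite when $\Pi$ is selfinjective and $2$-representation-infinite when $\Pi$ is $3$-Calabi--Yau of Gorenstein parameter $1$; either way $\Lambda'$ is $2$-hereditary, hence a finite-dimensional algebra of global dimension at most two. This holds for every cut at once.

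For condition (2), let $Q'$ be the quiver with vertices $Q_0$ and arrows $Q_1\setminus\CC$; by the definition of $\Pi_{\CC}$, the relation ideal of $\Lambda'$ inside $kQ'$ is generated by $\{\delta_cW\mid c\in\CC\}$, so this is \emph{some} set of relations of $\Lambda'$. Because $\gldim\Lambda'\le2$, such a generating set is automatically minimal once one knows that $|\CC|=\sum_{i,j}\dim_k\Ext^2_{\Lambda'}(S_i,S_j)$, the minimal number of relations of $\Lambda'$. To obtain this equality I would exploit the $\Z$-grading $g_{\CC}$: since $W$ is $g_{\CC}$-homogeneous of degree $1$, the complex above becomes a complex of graded $\Pi$-modules after the projective summands are given the appropriate degree shifts, and $\Lambda'$ is the degree-$0$ subalgebra. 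Combining this with the graded structure one shows, as in \cite{HI11}, that $\Pi$ is the $3$-preprojective algebra $\Pi(\Lambda')$, compatibly with the gradings, with $\CC$ corresponding to the new arrows of Keller's construction (Theorem \ref{thm:kel}); equivalently, the $\CC$-arrows index a minimal generating set of $\Ext^2_{\Lambda'}(D\Lambda',\Lambda')$ as a $\Lambda'$-bimodule. Hence $|\CC|$ equals the minimal number of relations of $\Lambda'$, so $\{\delta_cW\}_{c\in\CC}$ is a minimal set of relations and $\CC$ is algebraic.

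The main obstacle is the graded bookkeeping underlying condition (2): one must fix the degree shifts in the Bocklandt complex precisely and verify that the relevant graded truncation is genuinely a \emph{minimal} projective resolution over $\Lambda'$ of each simple module --- in particular that its differentials remain radical maps, which again uses that $Q$ has no loops (forced by $\gldim\Lambda'\le2$). No fresh idea is needed for the Calabi--Yau case: the sole ingredient is the exactness of the complex of Theorem \ref{thm:exact_si}, which Bocklandt's criterion provides there verbatim.
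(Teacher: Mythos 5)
The paper gives no argument of its own here beyond citing \cite[Proposition 3.10]{HI11} and remarking that the same proof works in the Calabi--Yau case, so your reconstruction must be measured against the argument of \cite{HI11} itself --- and as written it is circular. For condition (1) you invoke the equivalences recalled just before Theorem \ref{thm:exact_si} (``$\Pi$ selfinjective, resp.\ $3$-Calabi--Yau of Gorenstein parameter $1$, iff $\Pi_{\CC}$ is $2$-RF, resp.\ $2$-RI, for \emph{every} cut''). But in \cite{HI11} the implication ``selfinjective $\Rightarrow$ every truncated Jacobian algebra is $2$-RF'' is Theorem 3.11, which is deduced from Proposition 3.9 --- whose hypothesis is that the cut is \emph{algebraic} --- together with Proposition 3.10, i.e.\ together with the very statement you are proving; the situation for \cite{HIO14} is analogous. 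The same problem recurs, more seriously, in your treatment of condition (2): the identification $\Pi\cong\Pi(\Lambda')$ with $\CC$ corresponding to the Keller arrows is again \cite[Proposition 3.9]{HI11}, and it is precisely the minimality of $\{\delta_cW\}_{c\in\CC}$ --- the thing to be shown --- that makes the Keller quiver of $\Lambda'$ coincide with $Q$. So both halves of the proposal assume the conclusion in disguise.

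The repair is the direct argument you gesture at but then bypass. Give the exact complex of Theorem \ref{thm:exact_si} the shifts $P_i(-1)\to\bigoplus_aP_{t(a)}(g_{\CC}(a)-1)\to\bigoplus_bP_{h(b)}(-g_{\CC}(b))\to P_i$, so that every differential is homogeneous of degree $0$ for the cut grading, and take degree-$0$ components. For each vertex $i$ this yields an exact sequence
\begin{equation*}
0\to\bigoplus_{\substack{c\in\CC\\ h(c)=i}}\Lambda'e_{t(c)}\to\bigoplus_{\substack{b\in Q_1\setminus\CC\\ t(b)=i}}\Lambda'e_{h(b)}\to\Lambda'e_i\to S_i\to0,
\end{equation*}
a minimal projective resolution of $S_i$ over $\Lambda'=\Pi_{\CC}$ (the entries of the differentials are arrows and second derivatives of $W$, hence radical). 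This gives $\gldim\Lambda'\le2$ and $\dim_k\Ext^2_{\Lambda'}(S_i,S_j)=\#\{c\in\CC\mid h(c)=i,\ t(c)=j\}$ in one stroke, so $|\CC|$ equals the minimal number of relations of $\Lambda'$ and $\{\delta_cW\}_{c\in\CC}$ is a minimal generating set --- no appeal to the equivalences is needed. What remains is finite-dimensionality of $\Lambda'$: immediate when $\Pi$ is selfinjective (hence finite-dimensional), but a genuine additional point in the Calabi--Yau case, where $\Pi$ is infinite-dimensional. That is exactly the place where the two cases are not verbatim identical, and your proposal is silent on it.
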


\begin{lemma}
\label{lem:arrow_two_relations}
Let $\Pi = \Jac(Q, W)$ be a quadratic Jacobian algebra which is either selfinjective or $3$-Calabi--Yau. Suppose that $\Pi$ admits a monomial cut. Then there does not exist a triangle $W'$ of $W$ which has exactly two borders. 
\end{lemma}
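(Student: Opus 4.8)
The plan is to argue by contradiction: suppose there is a triangle $W' = x_3 x_2 x_1$ of $W$ with exactly two borders, say $x_2$ and $x_3$ are borders while $x_1$ is shared with (at least) one other triangle. Since $\Pi$ admits a monomial cut $\CC$, by the \textbf{Lemma} preceding \textbf{Lemma \ref{lem:mat_indec}} each triangle contains at least one border in degree $1$; I would first record which arrows of $W'$ can lie in $\CC$ (a border) and set up the local picture of the quiver around the vertex $v := h(x_3) = t(x_1)$, where the matrix $[\delta_{(a,b)}W]$ of the complex \eqref{eq:exact_cy}/\eqref{eq:exact_si} lives.

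Next I would run the indecomposability argument from Lemma \ref{lem:every_arrow_border} one level down. Because $x_3$ is a border, the row of $[\delta_{(a,b)}W]$ indexed by $x_3$ has a single nonzero entry, and because $x_2$ is a border, the column indexed by $x_1$ (the partner of $x_2$ in $W'$) has a single nonzero entry $[\delta_{(x_3,x_1)}W] = [x_2]$. Lemma \ref{lem:mat_indec} forces $[\delta_{(a,b)}W]$ to be indecomposable, so these two special rows/columns must already exhaust the matrix unless the remaining part of the matrix is ``glued'' to them through other entries. Concretely I expect to deduce that $x_3$ is the unique arrow into $v$ and $x_1$ is the unique arrow out of $v$, just as in Lemma \ref{lem:every_arrow_border}; the subtlety is that now $x_1$ is in another triangle, so this last conclusion is exactly what produces the contradiction, since $x_1$ being in a second triangle requires another arrow starting at $v$ or ending at $v$.

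The key technical step — and the main obstacle — is handling the case where the indecomposability of $[\delta_{(a,b)}W]$ does not immediately collapse the matrix to size $1\times 1$, i.e. when the non-border arrow $x_1$ shares the vertex $v$ with other arrows coming from the other triangle containing $x_1$. Here I would bring in Proposition \ref{prop:cut_alg}: every cut of $Q$ is algebraic, so in particular for the monomial cut $\CC$ the set $\{\delta_c W\}_{c\in\CC}$ is a minimal generating set of the relation ideal and $\Pi_{\CC}$ has global dimension $\le 2$. Taking $c = x_2$ (a border, hence potentially in $\CC$), the relation $\delta_{x_2}W = x_3 x_1$ is a single monomial; combined with the monomiality of $\Pi_{\CC}$ and Corollary \ref{cor:arrow_in_rel} (every arrow of $\Pi_{\CC}$ is part of a relation), I would derive constraints on how $x_1$ can sit in its second triangle that are incompatible with the degree-$1$ homogeneity of $W$ under $g_\CC$ — essentially, the second triangle would be forced to have no border in $\CC$, contradicting the \textbf{Lemma} that every triangle has at least one border when a monomial cut exists.

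Finally, I would clean up by noting the argument is symmetric in the roles of $x_2$ and $x_3$ and independent of which of the border arrows lies in the chosen monomial cut, so no generality is lost; and I would remark that the case of a triangle with all three arrows borders is precisely the content of Lemma \ref{lem:every_arrow_border}, so the only remaining possibility — once we also rule out two borders — is that a triangle with a border has exactly one border (unless $W$ has a single summand), which is what the surrounding discussion claims. The expected length is short: the whole proof should be a page or less, with the matrix-indecomposability bookkeeping around the vertex $v$ being the only place that needs care.
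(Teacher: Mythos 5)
Your first step does not go through, and the fallback step is where the entire content of the proof lies but is left as a gesture. The specific error: from ``$x_2$ is a border'' you conclude that the column of $[\delta_{(a,b)}W]$ indexed by $x_1$ has a single nonzero entry. What being a border actually gives is that the \emph{entry} $x_2$ occurs only once in the whole matrix, namely in position $(x_3,x_1)$; but the column indexed by $x_1$ acquires one nonzero entry for every triangle passing through $x_1$, and by hypothesis $x_1$ is \emph{not} a border, so the second triangle $W''$ through $x_1$ contributes its middle arrow as a second nonzero entry in that column. This extra entry is precisely what keeps $[\delta_{(a,b)}W]$ indecomposable, so Lemma \ref{lem:mat_indec} yields nothing at the vertex $v=h(x_3)=t(x_1)$. (At the vertex $h(x_2)=t(x_3)$ shared by the two borders the matrix genuinely collapses to size $1\times 1$, but that only says this vertex has in- and out-degree one, which contradicts nothing.) So the analogue of Lemma \ref{lem:every_arrow_border} ``one level down'' is simply not available.

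The paper's proof is instead entirely the cut argument you only sketch, and the missing mechanism is concrete. Write $W'=xyz$ with $x,y$ borders and $z$ lying in a second triangle $W''=uvz$. Exactly one arrow of $W''$ is in the monomial cut $\CC$, and it cannot be $z$ (not a border), so it is $u$ or $v$. Because $x$ and $y$ occur in no triangle other than $W'$, both $\CC$ (say with $x$ in degree $1$) and the grading $\CC'$ obtained by putting $y$ in degree $1$ instead of $x$ are cuts. If $v\in\CC$, then $\Pi_{\CC}$ contains the quadratic monomial relations $yz=0$ and $zu=0$, which overlap at $z$, so $\gldim\Pi_{\CC}\geq 3$; if $u\in\CC$, then $\Pi_{\CC'}$ contains $zx=0$ and $vz=0$, again overlapping at $z$. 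Either way some cut fails to be algebraic, contradicting Proposition \ref{prop:cut_alg}. Your sketch names Proposition \ref{prop:cut_alg} and ``degree-$1$ homogeneity'' but never identifies the overlap of two relations at the shared non-border arrow $z$ as the source of $\gldim\geq 3$, nor the trick of swapping the cut between the two borders of $W'$ to cover both subcases; without these two ingredients the proof is not there.
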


\begin{proof}
Suppose by contradiction that $W' = xyz$ is a triangle of $W$ such that $x$ and $y$ are both borders, and $z$ is not. Then there is another summand $W'' = uvz$ containing $z$. Let $\CC$ be the monomial cut on $\Pi$, in which we may assume without loss of generality that $x$ is in degree $1$. Then, the grading $\CC'$ obtained from $\CC$ by putting $x$ in degree $0$ and $y$ in degree $1$ is also a cut, since $x$ and $y$ are borders which do not appear in other triangles. Now, suppose that $v$ is in degree $1$. Then, in $\Pi_\CC$, we have that $zu = 0$ and $yz=0$. Thus $\gldim \Pi_\CC\geq 3$, and $\CC$ is not algebraic, contradicting Proposition \ref{prop:cut_alg}. Similarly, if $u$ is in degree $1$, then $\CC'$ is a non algebraic cut. As $z$ cannot be in degree $1$ in a monomial cut, $W''$ cannot be put in degree $1$ in $\CC$.
\end{proof}

Combining the previous two lemmas, we obtain the following corollary. 

\begin{corollary}
Let $\Pi = \Jac(Q, W) $ be a quadratic Jacobian algebra which is selfinjective or $3$-Calabi--Yau and admits a monomial cut. Then either every summand of $W$ has exactly one border, or $\Pi$ is the quiver algebra with potential with a unique triangle: 
\begin{center}
\begin{tikzpicture}[auto]
	\node[circle, draw, thin,fill=black!100, scale=0.4] (0) at (0,0) {};
	\node[circle, draw, thin,fill=black!100, scale=0.4] (1) at (0,1) {};
	\node[circle, draw, thin,fill=black!100, scale=0.4] (2) at (1,0) {};
	\node[fill=none, scale=1] (3) at (0.33,0.33) {$\circlearrowright$};
	\draw[decoration={markings,mark=at position 1 with {\arrow[scale=2]{>}}},
    postaction={decorate}, shorten >=0.4pt] (0) to (1);
	\draw[decoration={markings,mark=at position 1 with {\arrow[scale=2]{>}}},
    postaction={decorate}, shorten >=0.4pt] (1) to (2);
    	\draw[decoration={markings,mark=at position 1 with {\arrow[scale=2]{>}}},
    postaction={decorate}, shorten >=0.4pt] (2) to (0);
\end{tikzpicture}
\end{center}
\end{corollary}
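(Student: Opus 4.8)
The plan is to combine the two preceding lemmas directly. Suppose $\Pi = \Jac(Q,W)$ is a quadratic Jacobian algebra which is selfinjective or $3$-Calabi--Yau and admits a monomial cut. Since $\Pi$ is quadratic, $W$ is a sum of triangles, and by the remark preceding the corollary (ideas similar to Lemma~\ref{lem:deltaK_n=K_{n-1}}), every arrow lies in at least one triangle. Because $\Pi$ admits a monomial cut, the lemma characterising monomial cuts shows that every triangle contains at least one border. The strategy is to show that if some triangle contains \emph{more} than one border, then $\Pi$ must be the exceptional one-triangle algebra depicted in the statement.

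First I would consider a triangle $W'$ of $W$ that has at least two borders, say $W' = xyz$ with $x$ and $y$ both borders. There are two cases according to whether the third arrow $z$ is a border or not. If $z$ is also a border, then every arrow of $W'$ is a border, so Lemma~\ref{lem:every_arrow_border} applies and forces $\Pi$ to be the complete-cycle quiver of that lemma; but that algebra admits a monomial cut only when the cycle has length $3$ (otherwise no single arrow can be placed in degree $1$ while keeping $W$ homogeneous of degree~$1$, since each rotation of the full cycle is a summand and each arrow appears in several of them with the wrong degree) --- in fact a monomial cut requires each arrow in degree~$1$ to be a border, and in the length-$\geq 4$ cycle no arrow is a border. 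Hence in this subcase $\Pi$ is exactly the $3$-cycle with the single triangle $W = xyz + $ (its rotations), which is the quiver with potential in the statement. If, on the other hand, $z$ is not a border, then Lemma~\ref{lem:arrow_two_relations} says no such triangle can exist under the monomial-cut hypothesis --- a contradiction.

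Therefore the only way a triangle can have two or more borders is the first subcase, giving the exceptional algebra; in every other situation each triangle has exactly one border. I would phrase this as: either every summand of $W$ has exactly one border, or some summand has $\geq 2$ borders, and then by the dichotomy above the only possibility is the displayed one-triangle quiver with potential. I expect the proof to be short, essentially a two-line case split; the only mild subtlety --- and the place to be slightly careful --- is verifying that the complete-cycle algebra of Lemma~\ref{lem:every_arrow_border} is incompatible with admitting a monomial cut unless it is the length-$3$ cycle, so that the ``every arrow is a border'' subcase collapses to exactly the quiver drawn in the corollary rather than to a larger family.

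\begin{proof}
Since $\Pi$ is quadratic, $W$ is a sum of triangles, and every arrow of $Q$ lies in at least one triangle. As $\Pi$ admits a monomial cut $\CC$, the arrows in degree $1$ are borders, and since $W$ is homogeneous of degree $1$, each triangle contains at least one border. Suppose some triangle $W' = xyz$ of $W$ contains at least two borders, say $x$ and $y$. If $z$ is also a border, then every arrow of $W'$ is a border, so by Lemma \ref{lem:every_arrow_border}, $\Pi$ is the complete-cycle quiver with potential summing over all cyclic rotations of the cycle. But for such an algebra to admit a monomial cut, there must be an arrow in degree $1$ which is a border; since each arrow of a cycle of length $\geq 4$ appears in several rotations of the full cycle (hence is not a border), this forces the cycle to have length $3$. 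Thus $\Pi$ is the quiver with a unique triangle depicted in the statement. If instead $z$ is not a border, then Lemma \ref{lem:arrow_two_relations} yields a contradiction. Hence, either every summand of $W$ has exactly one border, or $\Pi$ is the displayed quiver algebra with potential with a unique triangle.
\end{proof}
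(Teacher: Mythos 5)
Your overall route is the same as the paper's, which obtains the corollary by simply combining Lemma \ref{lem:arrow_two_relations} and Lemma \ref{lem:every_arrow_border}: a monomial cut forces at least one border in every triangle, exactly two borders is excluded, and three borders triggers the complete-cycle lemma. The case split and the final conclusion are correct.

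However, the step where you collapse the three-border case to the $3$-cycle is argued incorrectly. You claim that a complete cycle of length $\geq 4$ cannot admit a monomial cut because ``each arrow appears in several rotations of the full cycle, hence is not a border.'' This is internally inconsistent: by the same reasoning each arrow of the $3$-cycle lies in all three of its rotations and would likewise fail to be a border, which would rule out the very algebra the corollary keeps. (It is also false on its own terms: the rotations of a single cycle are identified in the commutator quotient, and the length-$n$ cycle with the full-cycle potential does admit a monomial cut --- placing one arrow in degree $1$ yields $k\mathbb A_n$ modulo the single monomial relation of length $n-1$. What excludes it here is not the cut condition but quadraticity, since its Jacobian relations have length $n-1$.) The correct, and much shorter, justification is already in your hands: because $\Pi$ is quadratic, every summand of $W$ is a triangle, so the summand $W'=xyz$ all of whose arrows are borders has length $3$; the cycle produced by Lemma \ref{lem:every_arrow_border} is supported precisely on the arrows of $W'$, hence is the $3$-cycle with its unique triangle. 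No separate analysis of longer cycles is needed, and I would delete that detour entirely.
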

Note that the latter case is the preprojective algebra of $k\mathbb A_3/\mathcal J^2$, the first example in our main theorem. 

The vanishing-of-Ext condition also gives information about the quiver of $2$-hereditary quadratic monomial algebras. We have the following corollary to Proposition \ref{prop:arrow_start_end}.

\begin{corollary}
\label{cor:rel_sink_source}
 Let $\Lambda$ be a quadratic monomial algebra with $\Ext^1_{\Lambda^e}(\Lambda, \Lambda^e) = 0$. Then for every relation $\rho = ba$, the vertex $h(b)$ is a sink and the vertex $t(a)$ is a source. 
 \end{corollary}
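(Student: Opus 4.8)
The plan is to reduce everything to Proposition~\ref{prop:arrow_start_end}. Write $\rho = ba$ with $a,b \in Q_1$, so that $h(a) = t(b)$ and the relation starts at $t(a)$ and ends at $h(b)$. The key input is a combinatorial consequence of $\gldim\Lambda = 2$ (the standing assumption of this subsection): since $\Lambda$ is monomial we have $E^{\ell}(i,j) \cong e_j k\APb(\ell) e_i$, so $\APb(\ell)\neq\varnothing$ precisely when $\gldim\Lambda \geq \ell$; in particular $\APb(3) = \varnothing$. Unwinding the left construction for length-$2$ relations, the elements of $\APb(3)$ are exactly the length-$3$ paths both of whose length-$2$ subpaths lie in $M$, so $\APb(3) = \varnothing$ says that no two relations of $M$ can be composed into a path of length~$3$.

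With this in hand I would first check that $b$ is the end (the last arrow) of every relation it divides. Indeed, any relation divided by $b$ has length $2$, hence is either $bw$ with $h(w) = t(b)$ (so that $b$ is the last arrow) or $cb$ with $t(c) = h(b)$ (so that $b$ is the first arrow); in the second case $cba$ would be a length-$3$ path with $ba \in M$ and $cb \in M$, i.e.\ an element of $\APb(3)$, which is excluded. Dually, if $a$ were the last arrow of some relation $aw$ (so $h(w) = t(a)$), then $baw$ would lie in $\APb(3)$, again a contradiction; hence $a$ is the start of every relation it divides.

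Finally I would invoke Proposition~\ref{prop:arrow_start_end} twice. Since $\Ext^1_{\Lambda^e}(\Lambda,\Lambda^e) = 0$ and $b$ is the end of every relation it divides, the proposition (in the ``end'' formulation) forces $h(b)$ to be a sink; likewise, since $a$ is the start of every relation it divides, it forces $t(a)$ to be a source, which is exactly the assertion. The only delicate point is the first paragraph---identifying $\APb(3)$ with the set of length-$3$ overlaps of two relations and using $\gldim\Lambda = 2$ to conclude that it is empty; after that the argument is a direct citation of Proposition~\ref{prop:arrow_start_end} together with routine bookkeeping of heads, tails, and the convention that $ab$ denotes $b$ followed by $a$.
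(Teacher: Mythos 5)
Your proof is correct and takes essentially the same route as the paper: the paper's proof likewise notes that $\gldim\Lambda = 2$ forces each arrow to be the start (resp.\ the end) of every relation it divides and then cites Proposition~\ref{prop:arrow_start_end}. You simply make explicit the $\APb(3)=\varnothing$ bookkeeping that the paper leaves implicit.
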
 

\begin{proof}
Since $\gldim\Lambda = 2$, every arrow is either the start or the end of every relation they divide. The result thus follows directly from Proposition \ref{prop:arrow_start_end}. 
\end{proof}

This leads to the following definition.

\begin{definition}
\label{def:star}
Let $r,s\in\Z_{\geq 1}$. The \emph{$(r,s)$-star quiver}, denoted by $S_{(r,s)}$, is the quiver
\begin{center}
\begin{tikzpicture}[auto]
	\node[circle, draw, thin,fill=black!100, scale=0.4] (1) at (2,0) {};
	\node[circle, draw, thin,fill=black!100, scale=0.4] (2) at (1,1) {};
	\node[circle, draw, thin,fill=black!100, scale=0.4] (4) at (0,2) {};
	\node[circle, draw, thin,fill=black!100, scale=0.4] (5) at (2,2) {};
	\node[circle, draw, thin,fill=black!100, scale=0.4] (6) at (0,0) {};
	\node[circle, draw, thin,fill=black!100, scale=0.4] (7) at (-0.22,1.7) {};
	\node[circle, draw, thin,fill=black!100, scale=0.4] (8) at (2.22,1.7) {};
	\draw[decoration={markings,mark=at position 1 with {\arrow[scale=2]{>}}},
    postaction={decorate}, shorten >=0.4pt] (2) to (1);
 	\draw[decoration={markings,mark=at position 1 with {\arrow[scale=2]{>}}},
    postaction={decorate}, shorten >=0.4pt] (4) to (2);
	\draw[decoration={markings,mark=at position 1 with {\arrow[scale=2]{>}}},
    postaction={decorate}, shorten >=0.4pt] (2) to (5);
    	\draw[decoration={markings,mark=at position 1 with {\arrow[scale=2]{>}}},
    postaction={decorate}, shorten >=0.4pt] (6) to (2);
        	\draw[decoration={markings,mark=at position 1 with {\arrow[scale=2]{>}}},
    postaction={decorate}, shorten >=0.4pt] (7) to (2);
            	\draw[decoration={markings,mark=at position 1 with {\arrow[scale=2]{>}}},
    postaction={decorate}, shorten >=0.4pt] (2) to (8);
    \draw[dashed] (0.3,0.5) to [bend left=45] (0.3, 1.3) {};
    \draw[dashed] (1.7,0.5) to [bend right=45] (1.7, 1.3) {};
\end{tikzpicture}
\end{center}
with $r+s+1$ vertices and a central vertex $z$ which is the head of $r$ arrows and the tail of $s$ arrows. We always denote the arrows $i\to z$ by $a_i$ and the arrows $z\to j$ by $b_j$. 
\end{definition}

We conclude that every quadratic monomial $2$-hereditary algebra is a bound quiver algebra over a star quiver. 

\begin{corollary}
\label{cor:star}
Let $\Lambda$ be a quadratic monomial algebra with $\Ext^1_{\Lambda^e}(\Lambda, \Lambda^e) = 0$. Then the quiver of $\Lambda$ is an $(r,s)$-star quiver. In particular, $2$-hereditary quadratic monomial algebras are given by quotients of $(r,s)$-star quiver algebras.
\end{corollary}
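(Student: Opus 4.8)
The plan is to read the shape of $Q$ off from three consequences of the hypothesis $\Ext^1_{\Lambda^e}(\Lambda,\Lambda^e)=0$ that are already available: first, every arrow of $Q$ divides a relation (otherwise the argument in the proof of Lemma~\ref{lem:deltaK_n=K_{n-1}} applied with $\ell=2$ produces a nonzero class in $\Ext^1_{\Lambda^e}(\Lambda,\Lambda^e)$); second, by Corollary~\ref{cor:rel_sink_source}, for every relation $\rho=ba$ the vertex $t(a)$ is a source and $h(b)$ is a sink; third, by Proposition~\ref{prop:nosinksource}, every source of $Q$ has exactly one outgoing arrow and every sink exactly one incoming arrow. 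Everything else is combinatorics, using the standing assumption that $Q$ is connected.

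The first step is to classify the arrows. Since $\Lambda$ is quadratic, every relation is a path $ba$ of length $2$, so by the first fact every arrow $c$ occurs as the $a$ or the $b$ of such a relation. If $c$ occurs as $a$ then $t(c)$ is a source, and if $c$ occurs as $b$ then $h(c)$ is a sink; moreover an arrow cannot have both its tail a source and its head a sink, for such an arrow can be neither the first nor the second arrow of a length-$2$ path-relation, contradicting the first fact. Call a vertex \emph{middle} if it is neither a source nor a sink. Using Corollary~\ref{cor:rel_sink_source} once more: if $z$ is a middle vertex, an arrow into $z$ cannot have its head a sink, hence its tail is a source; dually, an arrow out of $z$ has its head a sink. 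In particular no arrow joins two middle vertices. Likewise the unique arrow leaving a source $x$ must be the first arrow of a relation (were it the second arrow, $x$ would have an incoming arrow), so it points at a middle vertex; dually the unique arrow entering a sink leaves a middle vertex.

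The conclusion is then immediate. The connected component of a middle vertex $z$ consists of $z$ together with the sources whose unique arrow points at $z$ and the sinks whose unique arrow leaves $z$; since $Q$ is connected there is exactly one middle vertex $z$, and $Q$ is precisely the star $S_{(r,s)}$ of Definition~\ref{def:star}, where $r$ is the number of sources (equivalently, of arrows into $z$) and $s$ is the number of sinks (equivalently, of arrows out of $z$), with $r,s\geq 1$ since $z$ is neither a source nor a sink. The degenerate possibility that $Q$ has no middle vertex is excluded: every arrow would then join a source to a sink, hence divide no length-$2$ relation, forcing $Q$ to have no arrows and $\Lambda\cong k$. Finally, for the last sentence, a $2$-hereditary algebra satisfies $\Ext^1_{\Lambda^e}(\Lambda,\Lambda^e)\cong\Ext^1_{\Lambda}(D\Lambda,\Lambda)=0$ by Corollary~\ref{cor:vosnex} (with $n=2$), so the first part applies and $\Lambda$ is a quotient of $kS_{(r,s)}$.

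The only real subtlety is uniqueness of the middle vertex, and this is exactly where Proposition~\ref{prop:nosinksource} is indispensable: without the degree-one condition at sources and sinks a single source could feed several middle vertices and $Q$ need not be a star (indeed one can write down quadratic monomial algebras of global dimension $3$ of that form). Beyond that the argument is bookkeeping; I would only be careful with the edge case of a quiver with no arrows and with the trivial observation that a middle vertex genuinely has both an incoming and an outgoing arrow.
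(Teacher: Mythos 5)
Your proof is correct and follows essentially the same route as the paper's: combine Corollary~\ref{cor:rel_sink_source}, Proposition~\ref{prop:nosinksource}, the fact that every arrow divides a relation, and connectedness of $Q$ to force a unique middle vertex. You simply spell out the combinatorial bookkeeping (and the degenerate cases) that the paper's three-sentence proof leaves implicit.
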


\begin{proof}
By Corollary \ref{cor:rel_sink_source}, every relation is a path of length $2$ which starts at a source and ends at a sink. Furthermore, Proposition \ref{prop:nosinksource} implies that these vertices can only be source and sink to one arrow. This means that the quiver of $\Lambda$ is made of paths of length $2$ which all intersect at a common middle vertex.
\end{proof}

Before completing the proof of the main theorem of this section, we explore further some quick restrictions on the relations which are imposed by the vanishing-of-Ext condition. From now on in this section, we let $\Lambda$ be a bound $(r,s)$-star quiver algebra. For each arrow $a$ such that $h(a) = z$, we define 
\[
	\mathcal Z_a := \{b:z\to j\, |\, ba = 0\}
\]
and we define a set $\mathcal Z_b$ similarly for arrows $b$ such that $t(b) = z$. 
By Lemma \ref{lem:arrow_two_relations}, we have that $|\mathcal Z_a|$ and $|\mathcal Z_b|$ are greater than or equal to $2$, unless $(r,s) = (1,1)$. 

\begin{lemma}
\label{lem:no_sub_rel}
Let $\Lambda$ be as above. If there are two distinct arrows $a$ and $a'$ such that $\mathcal Z_a \subset \mathcal Z_{a'}$, then $\Ext^1_{\Lambda^e}(\Lambda, \Lambda^e) \not= 0$. 
\end{lemma}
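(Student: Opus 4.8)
We work in the complex (\ref{eq:complex}), that is, $\Hom_{\Lambda^e}(P_{\bullet}, \Lambda^e)$ where $P_{\bullet}$ is the Bardzell resolution of the $(r,s)$-star quiver algebra $\Lambda$. Since $\Lambda$ is quadratic monomial, we have $\APb(2) = M$, the set of length-two relations, and we may read off $\APb(3)$ and the differentials $\tilde d_2, \tilde d_3$ from the explicit formulas in Section \ref{sec:comp}. The plan is to produce, from the hypothesis $\mathcal Z_a \subseteq \mathcal Z_{a'}$ with $a\neq a'$, an explicit $2$-cocycle in (\ref{eq:complex}) which is not a $2$-coboundary, thereby showing $\Ext^2_{\Lambda^e}(\Lambda,\Lambda^e)\neq 0$. (We should double-check the indexing convention: the relevant non-vanishing Ext will be in degree $2$, coming from a class in the $\APb(2)$-term killed by $\tilde d_3$ but not hit by $\tilde d_2$; since $\gldim\Lambda = 2$, any such class witnesses a failure of the vanishing-of-$\Ext$ condition in degree between $0$ and $n=\gldim\Lambda$ only if we are careful, so in fact the argument will rather produce a nonzero class in $\Ext^1$ by exhibiting an element in the $\APb(1)=Q_1$-term — mirroring Proposition \ref{prop:arrow_start_end}.) Concretely, I expect the witnessing element to live in $\bigoplus_{v\in Q_1}\Lambda e_{t(v)}\otimesk e_{h(v)}\Lambda$ and to be built from the arrows $a, a'$ together with the arrows in $\mathcal Z_{a'}\setminus \mathcal Z_a$.

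**Construction of the cocycle.** Write $\mathcal Z_{a'}\setminus \mathcal Z_a = \{b : z\to j \mid b a' = 0,\ ba\neq 0\}$; this set is nonempty since $a\neq a'$ forces the inclusion $\mathcal Z_a\subseteq\mathcal Z_{a'}$ to be proper (otherwise the two arrows would impose identical relations, which one rules out by the structure of the star quiver and the monomial/minimal hypothesis on $M$). Pick such a $b$. The relations divisible by $b$ are exactly $\{b c : c\in\mathcal Z_b^{-1}\}$ where I mean the arrows $c$ with $h(c)=z$ and $bc=0$; among these is $ba'$ but not $ba$. I would consider the element
\[
  \xi := (e_{t(a)}\otimesk e_z\, b)_b \ -\ (\text{correction terms})\ \in\ \bigoplus_{v\in Q_1}\Lambda e_{t(v)}\otimesk e_{h(v)}\Lambda,
\]
and compute $\tilde d_2(\xi)$ using the even/odd formula: $\tilde d_2$ sends $(e_{t(v)}\otimesk e_{h(v)})_v$ to a sum over relations $q$ with $v\in\Sub(q)$ of terms $(\Ri_v(q)e_{t(q)}\otimes e_{h(q)}\Le_v(q))_q$. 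The point of the inclusion $\mathcal Z_a\subseteq\mathcal Z_{a'}$ is that every relation killed by $a$ through $b$ is also killed by $a'$, so a suitably weighted combination of the generators attached to $a$, $a'$ and $b$ has all its $\tilde d_2$-contributions cancel in pairs across the $\APb(2)$-components. I would make this combination precise by matching, for each $c\in\mathcal Z_a$, the term coming from the relation $bc$ (present both for $a$'s contribution via... ) — concretely the bookkeeping is: the cocycle condition reduces to the set-theoretic identity $\{$relations hit via $a\} \subseteq \{$relations hit via $a'\}$, which is exactly $\mathcal Z_a\subseteq\mathcal Z_{a'}$.

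**Not a coboundary.** As in the proofs of Propositions \ref{prop:arrow_start_end} and \ref{prop:nosinksource}, I would check that $\xi$ is not in the image of $\tilde d_1$. The only generators of $\bigoplus_{i\in Q_0}\Lambda e_i\otimesk e_i\Lambda$ that can map onto a component supported on the arrow $b$ are $(e_{t(b)}\otimesk e_{t(b)})_{e_{t(b)}}$ and $(e_{h(b)}\otimesk e_{h(b)})_{e_{h(b)}}$, via $\tilde d_1((e_{t(b)}\otimes e_{t(b)})_{e_{t(b)}}) = (e_{t(b)}\otimesk e_{h(b)} b)_b + \cdots$ and $\tilde d_1((e_{h(b)}\otimes e_{h(b)})_{e_{h(b)}}) = (b e_{t(b)}\otimesk e_{h(b)})_b + \cdots$. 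Since the $b$-component of $\xi$ is of the form $e_{t(b)}\otimesk e_{h(b)}b$ times an idempotent path that is \emph{not} $e_{t(b)}$ (because $t(a)\neq t(b)=z$), no $k$-linear combination of these two images produces the $b$-component of $\xi$; moreover the other generators appearing in those images (on arrows out of $h(b)$, resp. into $t(b)$) cannot be cancelled by further terms, so $\xi$ is genuinely not a coboundary. Hence $\Ext^1_{\Lambda^e}(\Lambda,\Lambda^e)\neq 0$.

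**Main obstacle.** The delicate part is the precise choice of the correction terms in $\xi$ and verifying that the $\tilde d_2$-image vanishes exactly when $\mathcal Z_a\subseteq\mathcal Z_{a'}$ and not merely "generically" — one must make sure that the contributions indexed by relations in $\mathcal Z_{a'}\setminus\mathcal Z_a$ (the ones that make the inclusion proper) are also killed, which is what forces the coefficient on the $a'$-generator and is where the hypothesis is genuinely used; I would organize this by writing $\tilde d_2$ of each generator as a formal sum over the relevant relation set and reading off linear equations on the coefficients, whose solvability is equivalent to the stated inclusion. The non-coboundary check is then routine, parallel to the earlier propositions.
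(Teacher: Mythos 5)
There is a genuine gap: you never actually produce the cocycle. The witnessing element should live in the component of $\bigoplus_{v\in Q_1}\Lambda e_{t(v)}\otimes_k e_{h(v)}\Lambda$ indexed by one of the two arrows $a,a'$ into $z$, not in a component indexed by some $b\in\mathcal Z_{a'}\setminus\mathcal Z_a$. Concretely, take
\[
\xi:=(e_{t(a)}\otimes_k e_z a')_{a}\in\Lambda e_{t(a)}\otimes_k e_z\Lambda .
\]
Since the relations containing $a$ are exactly the paths $ba$ with $b\in\mathcal Z_a$, the even-degree formula gives $\tilde d_2(\xi)=\sum_{b\in\mathcal Z_a}(e_{t(a)}\otimes_k b a')_{ba}$, and every term vanishes because $\mathcal Z_a\subseteq\mathcal Z_{a'}$ means $ba'=0$ for each such $b$. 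No ``correction terms'' are needed, and the hypothesis is used exactly once, in this vanishing. (This is the paper's proof, up to what appears to be a transposition of $a$ and $a'$ in its displayed element.) By contrast, your candidate $(e_{t(a)}\otimes_k e_z b)_b$ does not even typecheck: the $b$-component is $\Lambda e_z\otimes_k e_{h(b)}\Lambda$ and $e_{t(a)}\notin\Lambda e_z$ (also $e_zb=0$ since $b$ starts at $z$); and the cocycle condition for your element is precisely the step you defer to unspecified correction terms, i.e.\ the core of the argument is missing. The non-coboundary check you sketch is essentially right once the element is fixed: the $a$-component of any coboundary lies in $\Lambda a\otimes_k e_z\Lambda+\Lambda e_{t(a)}\otimes_k a\Lambda$, which does not contain $e_{t(a)}\otimes_k a'$.

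A secondary error: you claim $a\neq a'$ forces $\mathcal Z_a\subsetneq\mathcal Z_{a'}$. This is false (two distinct arrows into $z$ may be annihilated by exactly the same set of arrows out of $z$; minimality of $M$ does not forbid this) and, more to the point, properness of the inclusion is never needed -- the argument above works verbatim when $\mathcal Z_a=\mathcal Z_{a'}$.
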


\begin{proof}
Suppose that the two arrows in the statement are such that $h(a) = h(a') = z$, the case where $t(a) = t(a') = z$ being similar. 
Consider then the element 
\[
	(e_{t(a')}\otimesk e_z a)_{a'}\in \Lambda e_{t(a')}\otimesk e_z \Lambda.
\]
This is a cocycle since $\mathcal Z_a\subset \mathcal Z_{a'}$. It is however not a coboundary, by the same principles as in section \ref{sec:trunc}. 
\end{proof}

We obtain the following corollary as a particular case.

\begin{corollary}
\label{cor:not_all_rel}
Let $\Lambda$ be as above and assume that $\Ext^1_{\Lambda^e}(\Lambda, \Lambda^e) = 0$. Suppose that $s\geq 2$ and let $a$ be an arrow such that $h(a) =z$. Then $|\mathcal Z_a|\leq s-1$. Similarly, if $r\geq 2$ and $b$ is an arrow such that $t(b) = z$, then $|\mathcal Z_b| \leq r-1$.
\end{corollary}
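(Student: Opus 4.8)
The plan is a proof by contradiction resting on the $\Ext$-computation machinery of Section~\ref{sec:comp} together with Lemma~\ref{lem:no_sub_rel}. Suppose $s\geq 2$ and that $|\mathcal Z_a|=s$ for some arrow $a$ with $h(a)=z$. Then $\mathcal Z_a$ must be the \emph{whole} set $\{b_1,\dots,b_s\}$ of arrows out of $z$, i.e.\ $b_\ell a=0$ for every $\ell$. I will exploit this maximality to produce a non-trivial class in $\Ext^1_{\Lambda^e}(\Lambda,\Lambda^e)$, contradicting the hypothesis; the symmetric statement for arrows out of $z$ will then follow by passing to $\Lambda^{\op}$.

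\emph{Case $r\geq 2$.} Since $z$ is the head of $r\geq 2$ arrows, choose an arrow $a'\neq a$ with $h(a')=z$. By definition $\mathcal Z_{a'}\subseteq\{b_1,\dots,b_s\}=\mathcal Z_a$, so the hypothesis of Lemma~\ref{lem:no_sub_rel} is met and $\Ext^1_{\Lambda^e}(\Lambda,\Lambda^e)\neq 0$. Concretely, the relevant cocycle in complex (\ref{eq:complex}) is $(e_{t(a')}\otimesk e_z a)_{a'}$: applying $\tilde d_2$ produces $\sum_{b_\ell\in\mathcal Z_{a'}}(e_{t(a')}\otimesk e_{h(b_\ell)}\,b_\ell a)_{b_\ell a'}$, and every summand vanishes precisely because $\mathcal Z_a$ contains every arrow out of $z$; it is not a coboundary for the same reason as in the proofs of Section~\ref{sec:trunc} (the only generators of $\bigoplus_{v\in Q_0}\Lambda e_v\otimesk e_v\Lambda$ hitting the $a'$-component via $\tilde d_1$ contribute $(e_{t(a')}\otimesk a')_{a'}$ and $(a'\otimesk e_z)_{a'}$, neither of which equals our cocycle). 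Hence $|\mathcal Z_a|\leq s-1$ in this case.

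\emph{Case $r=1$.} Now $a$ is the unique arrow with head $z$, the only length-$2$ paths of $S_{(1,s)}$ are $b_1a,\dots,b_sa$, and $|\mathcal Z_a|=s$ forces each of these to be a relation, so $\Lambda\cong kS_{(1,s)}/\mathcal J^2$ is a truncated path algebra with acyclic quiver $S_{(1,s)}\neq\mathbb A_m$ (as $s\geq 2$) and $\gldim\Lambda=2$. Proposition~\ref{prop:rad_no_ext} then gives $0<j<2$ with $\Ext^j_{\Lambda^e}(\Lambda,\Lambda^e)\neq 0$, i.e.\ $\Ext^1_{\Lambda^e}(\Lambda,\Lambda^e)\neq 0$, again a contradiction. (If one prefers an explicit obstruction: for distinct $j,k$ the element $(b_k e_z\otimesk e_j)_{b_j}\in\Lambda e_{t(b_j)}\otimesk e_{h(b_j)}\Lambda$ is a $1$-cocycle — the only relation dividing $b_j$ is $b_j a$ and $b_k a=0$ — which is not in the image of $\tilde d_1$, since the $b_j$-component of that image is spanned by $(e_z\otimesk b_j)_{b_j}$ and $(b_j\otimesk e_j)_{b_j}$.) This proves part (a).

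Part (b) is dual: if $r\geq 2$ and $b$ is an arrow with $t(b)=z$, apply (a) to $\Lambda^{\op}$, which is a bound $(s,r)$-star quiver algebra with $\Ext^1_{(\Lambda^{\op})^e}(\Lambda^{\op},(\Lambda^{\op})^e)\cong\Ext^1_{\Lambda^e}(\Lambda,\Lambda^e)=0$, and in which the arrows out of the centre of $\Lambda$ become arrows into the centre of $\Lambda^{\op}$. The only point requiring care is the degenerate case $r=1$, where no competitor arrow $a'$ exists and one must instead recognise $\Lambda$ as an already-excluded truncated path algebra; the remaining step is a direct application of Lemma~\ref{lem:no_sub_rel} once one observes that the extremal hypothesis $|\mathcal Z_a|=s$ makes $\mathcal Z_{a'}\subseteq\mathcal Z_a$ hold automatically for every arrow $a'\neq a$ into $z$.
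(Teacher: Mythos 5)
Your proof is correct and follows the paper's intended route: the corollary is stated there as an immediate consequence of Lemma \ref{lem:no_sub_rel}, exactly as in your main case, since $|\mathcal Z_a|=s$ forces $\mathcal Z_{a'}\subseteq \mathcal Z_a$ for any second arrow $a'$ into $z$ (and your explicit cocycle computation matches the one in that lemma). Your separate treatment of the degenerate case where no such second arrow exists ($r=1$, resp.\ $s=1$), handled by recognising $\Lambda$ as a truncated star-quiver algebra and invoking Proposition \ref{prop:rad_no_ext}, covers a case the paper passes over in silence and is a worthwhile supplement.
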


We now show that the upper bound on $|\mathcal Z_a|$ is even smaller if $\Lambda$ is $2$-RF.

\begin{lemma}
\label{lem:no_m-1_rel}
Let $\Lambda$ be as above and suppose that $\Lambda$ is $2$-RF. Suppose that $s\geq 2$ and let $a$ be an arrow such that $h(a) =z$. Then $|\mathcal Z_a|\leq s-2$. Similarly, if $r\geq 2$ and $b$ is an arrow such that $t(b) = z$, then $|\mathcal Z_b| \leq r-2$.
\end{lemma}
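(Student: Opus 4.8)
The plan is to argue by contradiction: suppose $s \geq 2$ and there is an arrow $a$ with $h(a) = z$ and $|\mathcal Z_a| = s-1$ (the case $|\mathcal Z_a| = s$ being already excluded by Corollary \ref{cor:not_all_rel}, since $\Ext^1_{\Lambda^e}(\Lambda, \Lambda^e) = 0$ for a $2$-RF algebra). Thus there is exactly one arrow $b_0 \colon z \to j_0$ with $b_0 a \neq 0$ in $\Lambda$, and $b a = 0$ for every other arrow $b$ out of $z$. The strategy is to compute $\Se_n^{-1}$ (equivalently, the relevant piece of the preprojective algebra via Proposition \ref{thm:quiver_construction}, or directly the Nakayama-type combinatorics) applied to the indecomposable projective $P_{t(a)}$, and to show that the $2$-RF property forces this arrow $a$ to behave like an arrow in a Nakayama algebra, which will clash with the assumption $s \geq 2$.

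The key steps, in order, would be: first, use Theorem \ref{thm:kel} and the explicit description of $Q_\Lambda$ and $W_\Lambda$ to write down $\Pi(\Lambda)$ as a Jacobian algebra over the star quiver; the relation $ba = 0$ contributes an arrow $c_{ba} \colon h(b) \to t(a)$ and a triangle $b a c_{ba}$ in $W_\Lambda$. Second, since $\Lambda$ is $2$-RF, $\Pi(\Lambda)$ is selfinjective (part (1) of the theorem relating $n$-hereditary algebras and preprojective algebras), so the exact complex \eqref{eq:exact_si} holds for every simple. Applying it at the vertex $z = h(a)$: the incoming arrows at $z$ are the $a_i$ and the arrows $c_{ba}$ coming from relations $b a_i = 0$, while the outgoing arrows at $z$ are the $b_j$ and the arrows coming from relations out of $z$. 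Third, I would examine the matrix $[\delta_{(a,b)}W]$ at vertex $z$ and use its indecomposability (Lemma \ref{lem:mat_indec}), together with the hypothesis that $a$ has only the single non-relation $b_0 a$, to count: the column of this matrix indexed by the arrow $a$ has very few nonzero entries (coming only from the triangles $b a c_{ba}$ with $b \neq b_0$, i.e. from the $s - 1$ relations in $\mathcal Z_a$, together with the unique triangle containing $b_0 a$). Pushing the indecomposability argument — in the same spirit as the proofs of Lemma \ref{lem:every_arrow_border} and Lemma \ref{lem:arrow_two_relations} — will force a strong rigidity on the arrows meeting $z$, and I expect it to force $r = 1$ or $s = 1$, contradicting $s \geq 2$ (after also invoking Lemma \ref{lem:no_sub_rel} to rule out containments among the $\mathcal Z_a$'s, so that the various $\mathcal Z_{a_i}$ are genuinely spread out). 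The dual statement for $b$ with $t(b) = z$ follows by the evident symmetry (opposite algebra).

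The main obstacle I anticipate is the bookkeeping in step three: the matrix $[\delta_{(a,b)}W]$ at $z$ involves both the original arrows of the star quiver and the new arrows $c_\rho$ added for each relation $\rho$, so one must carefully identify which triangles of $W_\Lambda$ contribute to which entry, and then argue that if $|\mathcal Z_a| = s-1$ the complex \eqref{eq:exact_si} at some vertex fails to be exact — most plausibly by exhibiting an explicit cycle that is not a boundary, exactly as in Lemma \ref{lem:mat_indec} and the section \ref{sec:trunc} arguments. A cleaner alternative, which I would try first, is to avoid the preprojective algebra entirely and instead compute $\Omega^{-1}$ of the injective $I_{t(a)}$ (or $\Omega$ of a suitable module) directly using the explicit projective resolution over the monomial algebra $\Lambda$ from Theorem \ref{thm:Bardzell}, combined with Proposition \ref{lem:indec_syz}: if $|\mathcal Z_a|$ is too large, the relevant syzygy decomposes, producing a nonzero $\Ext^1_\Lambda(N, \Lambda)$ for an indecomposable $N$ that ought to lie on the $\tau_2$-orbit from a projective to an injective, contradicting $2$-representation-finiteness. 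Whichever route, the crux is extracting a combinatorial contradiction from "$a$ lies in exactly $s-1$ relations out of a possible $s$."
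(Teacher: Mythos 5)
There is a genuine gap here: both of your proposed routes stall at the decisive step, and the first rests on a miscounting. In route one you assert that the column of $[\delta_{(a,b)}W]$ at the vertex $z$ indexed by $a$ has ``very few nonzero entries''. It is the opposite: the $(b_j,a)$-entry of that matrix is $c_{b_ja}$ precisely when $b_ja=0$, so under the hypothesis $|\mathcal Z_a|=s-1$ that column has $s-1$ nonzero entries, i.e.\ all but one. Such a column is no obstruction to indecomposability, so the mechanism of Lemma \ref{lem:mat_indec} and Lemma \ref{lem:every_arrow_border} (which bites only when a row or column has a \emph{single} nonzero entry) yields nothing, and no explicit cycle-that-is-not-a-boundary is produced; ``pushing the indecomposability argument'' does not force $r=1$ or $s=1$. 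Route two, in the form of contradicting $\Ext^1_\Lambda(D\Lambda,\Lambda)=0$, cannot work either: that vanishing holds for every $2$-hereditary algebra and, by the paper's own Corollary \ref{cor:not_all_rel}, only gives $|\mathcal Z_a|\le s-1$; sharpening to $s-2$ genuinely needs the $2$-RF hypothesis. Your refinement via modules on the $\tau_2$-orbit is not doomed in principle, but you name no candidate module and do no computation.

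The invariant that actually does the job, and which you never isolate, is Loewy length. Set $i=t(a)$ and let $b_j$ be the unique arrow out of $z$ with $b_ja\neq 0$. Every path of length $3$ in $\Pi(\Lambda)$ starting at $i$ has the form $c_\rho b_j a$ with $\rho=b_ja_x$ a relation of $\Lambda$; the Jacobian relation $\partial_{a_x}W=\sum_{\rho'=b_{j'}a_x}c_{\rho'}b_{j'}=0$ gives $c_\rho b_ja=-\sum_{\rho'\neq\rho}c_{\rho'}b_{j'}a=0$, because every $b_{j'}$ with $j'\neq j$ lies in $\mathcal Z_a$. Hence $\ell\ell(\Pi e_i)=3$. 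On the other hand no path $a_\nu c_\rho b_\mu$ vanishes (this uses $|\mathcal Z_{a_\nu}|,|\mathcal Z_{b_\mu}|\ge 2$ from Lemma \ref{lem:arrow_two_relations}), so $\ell\ell(\Pi e_z)\ge 4$. Since $\Pi(\Lambda)$ is a graded selfinjective algebra with homogeneous relations, all its indecomposable projectives have the same Loewy length, and this is the contradiction. If you wish to stay closer to your route one, the right place to test exactness of \eqref{eq:exact_si} is the source vertex $i$, not $z$, but you would still need the computation above showing that everything in radical degree $3$ of $\Pi e_i$ dies.
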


\begin{proof}
We can use a Loewy length ($\ell\ell$) argument as follows. Suppose that there is an arrow $a_i:i\to z$ such that $|\mathcal Z_{a_i}| = s-1$. Consider the preprojective algebra $\Pi$ over $\Lambda$. We refer to its description below Theorem \ref{thm:kel}. 
Let $P_i := \Pi e_i$. We show that $\ell\ell(P_i) = 3$, whereas $\ell\ell (P_z)\geq 4$. 
Since $\Pi$ is selfinjective, this contradicts \cite[Theorem 3.3]{MV99}. 

Let $b_j:z\to j$ be the only arrow such that $b_ja_i\not =0$. Let $\rho = b_ja_{x}$ be a relation in $\Lambda$ and $c_\rho$ be the corresponding arrow in $\Pi$. Then $c_\rho b_ja_i = -\sum c_{\rho'} b_{j'}a_i = 0$, where the sum is taken over relations of the form $\rho':=b_{j'}a_x$ in $\Lambda$ which are not equal to $\rho$. The sum is not empty since $s\geq 2$. 
This shows that $\ell\ell(P_i) = 3$. Now, since a path of the form $a_\nu c_\rho b_\mu$ is never $0$ in $\Pi$ for any vertices $\nu, \mu$ and relations $\rho$, we have $\ell\ell(P_z)\geq 4$. 
Here, we have used the fact that $|\mathcal Z_{b_\mu}|,|\mathcal Z_{a_\nu}|\geq 2$. The argument is dual for an arrow $b:z\to i$ such that $|\mathcal Z_b| = r-1$.
\end{proof}

In particular, this implies that, if $\Lambda$ is $2$-RF, then either $(r,s) = (1,1)$, or $r, s \geq 4$.

We now have plenty of tools to give a full classification of the monomial $2$-hereditary algebras whose preprojective algebras is a planar quiver with potential. We prove the main theorem of this section. Note that, for the previous results of this section, we have not assumed that the preprojective algebra is a planar QP. We need the hypothesis now.

\begin{proof}[Proof of Theorem \ref{thm:main_quad_mono}]
By reasons given above, one can easily check that the two bound quiver algebras described in (\ref{two_quivers}) are $2$-RF. 
Assume that $\Lambda$ is a $2$-hereditary quadratic monomial algebra whose preprojective algebra is a planar quiver with potential. 
We prove that they are the only ones coming from a planar quiver with potential. 

By Corollary \ref{cor:star}, $\Lambda$ is an $(r,s)$-star quiver. By \cite[Proposition 3.15]{Pet19}, the planarity assumption allows us to conclude that every arrow in $\Pi(\Lambda)$ is contained in at most two summands of the potential $W$. Combining this with Lemma \ref{lem:arrow_two_relations}, we see that every arrow in $\Lambda$ is part of exactly $2$ relations. Therefore, the quiver of $\Pi(\Lambda)$ is given by the intersection of oriented triangles which all share a common vertex $z$, thus forming a regular polygon shape. In particular, $r=s$. In addition, if $\Lambda$ is $2$-RF, then we have that $r,s\geq 4$ by Lemma \ref{lem:no_m-1_rel}, unless $(r,s) = (1,1)$. If $\Lambda$ is $2$-RI, then we also obtain the same conclusion, since in the case $r=2$ or $r=3$, the preprojective algebra is clearly finite-dimensional. If $r=1$ or $r=4$, then we recover the bound quiver algebras described in (\ref{two_quivers}).  

Assume that $r\geq 5$. We show that $\Ext^1_{\Lambda^e}(\Lambda, \Lambda^e)\cong\Ext^1_{\Lambda}(D\Lambda, \Lambda) \not = 0$. 
Let $I_m$ be the injective module associated to a sink $m$ and $b: z\to m$. Also recall that $\mathcal Z_b^\complement := Q_1\setminus{\mathcal Z_b}$. 
Then $|\mathcal Z_b^\complement| = r-2$. 
Let $a_1,\ldots, a_{r-2}$ be the arrows in $\mathcal Z_b^\complement$ and define $t_i:= t(a_i)$ for $i = 1, \ldots, r-2$. 
Without loss of generality, we can assume that we ordered the arrows so that $|\mathcal Z_{a_i}\cap \mathcal Z_{a_{i+1}}| = 1$ for $i = 1, \ldots, r-3$. 
This is due to the planarity assumption on $\Pi(\Lambda)$. 
We call $b_i$ the arrow in this intersection for $i = 1, \ldots, r-3$ and define $h_i:= h(b_i)$. 
Then the projective resolution of $I_m$ is given by 
\[
	0\to \bigoplus_{i = 1, \ldots, r-3}P_{h_i}\to P_z^{r-3}\to \bigoplus_{i = 1, \ldots, r-2}P_{t_i}\to 0. 
\]
Applying $\Hom_{\Lambda}(-, \Lambda e_{t_2})$, we obtain a complex 
\[
	0\to \bigoplus_{i = 1, \ldots, r-2}e_{t_i}\Lambda e_{t_2}\to (e_z\Lambda e_{t_2})^{r-3}\to \bigoplus_{i = 1, \ldots, r-3}e_{h_i}\Lambda e_{t_2} \to \Ext^2_{\Lambda}(I_m, \Lambda e_{t_2})\to 0. 
\]
This complex is not exact at $(e_z\Lambda e_{t_2})^{r-3}$  since $\dim_k  (\bigoplus_{i = 1, \ldots, r-2}e_{t_i}\Lambda e_{t_2}) = 1$, $\dim_k ((e_z\Lambda e_{t_2})^{r-3}) = r-3$ and $\dim_k (\bigoplus_{i = 1, \ldots, r-3}e_{h_i}\Lambda e_{t_2}) = r-5$. The last equality can be explained by the fact that $b a_{2} = 0$ for $b \in Q_1$ if and only if $b = b_1$ or $b_2$. 

Thus $\Ext^1_{\Lambda}(D\Lambda, \Lambda) \not = 0$. Note that we could have chosen to take $\Hom_{\Lambda}(-, \Lambda e_{t_\mu})$ for any $\mu = 2,\ldots, r-3$ and still obtain the same conclusion. 
\end{proof}


\begin{example}
\label{ex:2rf}
We give an example of a quadratic monomial 2-RF algebra whose $3$-preprojective algebra is a non-planar selfinjective quiver with potential. If $(r,s) = (9,6)$ with arrows $a_i \colon i \rightarrow z$ for $i = 1,\ldots,9$ and $b_j \colon z \rightarrow j$, for $j = 1,\ldots,6$ one gets an example with relations 
\[
b_1 a_1, b_4 a_1, 
b_1 a_2, b_5 a_2,
b_1 a_3, b_6 a_3,
b_2 a_4, b_4 a_4,
b_2 a_5, b_5 a_5,
b_2 a_6, b_6 a_6,
b_3 a_7, b_4 a_7,
b_3 a_8, b_5 a_8,
b_3 a_9, b_6 a_9.
\]

This can be seen to be a cut of $\Pi(\mathbb{D}_4 \otimes \mathbb{D}_4)$, where both copies of $\mathbb{D}_4$ are oriented with arrows going out of the central vertex. 
Since $\mathbb{D}_4$ with this orientation is $\ell$-homogeneous, \cite[Proposition 1.4]{HI11b} implies that the tensor product is $2$-RF, and hence, this example is also $2$-RF. As the quiver of $\Pi(\mathbb{D}_4 \otimes \mathbb{D}_4)$ is a non-planar graph, the example is non-planar, too.

We note that by observing that to get a quadratic monomial cut the QP must have ``enough'' borders, it is not too hard to see that the only possible tensor products of Dynkin diagrams that have $3$-preprojective algebras with such cuts involve $\mathbb{A}_3$ and $\mathbb{D}_4$ with bipartite orientation. Moreover, $\mathbb{A}_3 \otimes \mathbb{D}_4$ can be checked to not be $2$-RF, and $\mathbb{A}_3 \otimes \mathbb{A}_3$ yields the planar example.
\end{example} 

\begin{remark} 
One should note that many natural constructions on algebras that preserve the property of being $n$-hereditary do not necessarily preserve being monomial. For instance, this includes tensor products and certain skew-group ring constructions.

Moreover, while there exist other non-planar examples, the ones we know of are all fairly large and are somewhat more complicated than the one mentioned above. 
\end{remark}


\subsection{The case $n\geq 3$} 

We classify all $n$-representation-finite quadratic monomial algebras of global dimension higher than $2$. Note that we do not assume that the preprojective algebra is a planar QP.  

\begin{theorem}
\label{thm:mono_higher}
With the exception of $k\mathbb A_{n+1}/\mathcal J^2$, there are no quadratic monomial $n$-RF algebras for $n\geq 3$. 
\end{theorem}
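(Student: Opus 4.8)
The plan is to leverage the structural results already accumulated for $n$-hereditary monomial algebras together with Bardzell's resolution. First, by Corollary \ref{cor:arrow_in_rel}, every arrow in $Q$ must divide a relation, and since $\Lambda$ is quadratic every relation is a path of length $2$; so each arrow $a$ is either the tail-arrow or the head-arrow of some length-$2$ relation. I would begin by pinning down the terms $k\APb(\ell)$ appearing in the Bardzell complex (\ref{eq:Bardzell}) for a quadratic monomial algebra: here $\APb(2)=M$ consists of paths of length $2$, and more generally the left construction forces $\APb(\ell)$ to consist of paths $p$ such that every length-$2$ subpath "built into" $p$ lies in $M$; the key point is that, because relations have length $2$, consecutive relations in a chain overlap in exactly one arrow, so the paths in $\APb(\ell)$ have length $\ell$. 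Since $\gldim\Lambda = n$, we need $\APb(n)\ne\varnothing$ but $\APb(n+1)=\varnothing$; equivalently there is a maximal chain of length $n$ of composable arrows, every consecutive pair a relation, and no longer one.

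Next I would exploit the vanishing-of-$\Ext$ condition exactly as in Section \ref{sec:trunc}: for each $1\le \ell < n$, producing a nonzero cocycle in (\ref{eq:complex}) that is not a coboundary contradicts Corollary \ref{cor:vosnex}. Concretely, I would show that if at any vertex $v$ there were two distinct arrows into $v$ both of which can be extended to maximal relation-chains (or dually two out of $v$), one can write down a cocycle of the form $(e_{t(p)}\otimes e_{h(p)}\,w)_p$ for a suitable chain-path $p\in\APb(\ell)$ and suitable "tail" path $w$, whose only potential preimages under $\tilde d_1$ (the $(e_i\otimes e_i)_{e_i}$) cannot combine to hit it — this is the same mechanism as in Propositions \ref{prop:arrow_start_end}, \ref{prop:nosinksource}, and \ref{prop:rad_no_ext}. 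The upshot should be that $Q$ admits no branching whatsoever: every vertex has at most one incoming and at most one outgoing arrow that participates in the relation-chain structure, and by connectedness $Q$ must be a linear quiver, i.e. $Q=\mathbb A_m$. Combined with the fact that all relations are the length-$2$ paths and there is a chain of length exactly $n$, this gives $\Lambda \cong k\mathbb A_{m}/\mathcal J^2$ with the global dimension computation forcing $m=n+1$. Finally, invoking Theorem \ref{thm:class_trunc} (with $\ell=2$, so $n = 2\frac{m-1}{2}=m-1$, giving $m=n+1$), this algebra is indeed $n$-RF, and it is the unique one.

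The main obstacle I anticipate is the branching analysis: unlike the truncated case, here the relation set $M$ need not contain \emph{all} length-$2$ paths through a given vertex, so at a vertex $v$ of in-degree or out-degree $\ge 2$ one must argue carefully about which arrows can be "absorbed" into a relation-chain of the right length and rule out every configuration — including the subtle case where two chains reconverge or where a short chain dead-ends before length $n$. The cleanest route is probably to show directly that any vertex through which a relation-chain of length $\ge 2$ passes, but which also carries an extra arrow not lying on a maximal chain, produces a non-coboundary cocycle as in case \textbf{C1}/\textbf{b.ii)} of Proposition \ref{prop:rad_no_ext}; and separately, at the endpoints of the maximal chain, any extra arrow forces a sink/source with two incident arrows, handled by Proposition \ref{prop:nosinksource}. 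Once $Q=\mathbb A_m$ is forced and all length-$2$ subpaths must be relations (again by Corollary \ref{cor:arrow_in_rel} applied to each arrow, or a short cocycle argument showing a missing relation creates nonvanishing $\Ext$), the identification $\Lambda = k\mathbb A_{n+1}/\mathcal J^2$ and the appeal to Theorem \ref{thm:class_trunc} are immediate.
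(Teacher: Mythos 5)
Your opening and closing steps match the paper: every arrow lies on a maximal ``relation-chain'' of length exactly $n$ running from a source to a sink (this follows from Proposition \ref{prop:arrow_start_end} together with Lemma \ref{lem:deltaK_n=K_{n-1}}), the sets $\APb(\ell)$ are exactly the length-$\ell$ relation-chains, and once the quiver is forced to be linearly oriented $\mathbb A_{n+1}$ with all squares zero, Vaso's classification finishes the job. The problem is the middle step, which you yourself flag as ``the main obstacle'' and then do not carry out: you propose to exclude all branching purely by exhibiting non-coboundary cocycles coming from the vanishing-of-$\Ext$ condition. This is a genuine gap, and there is concrete reason to doubt the proposed mechanism suffices. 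The vanishing-of-$\Ext$ condition is a consequence of being $n$-hereditary, and in the $n=2$ case of this very paper it demonstrably does \emph{not} exclude branching: it only forces the quiver to be an $(r,s)$-star, and genuinely $2$-RF quadratic monomial algebras with heavy branching exist (the $(4,4)$-star and the $(9,6)$ example). So a proof that works for $n\geq 3$ must identify what is special about longer chains, and the paper's actual argument does this using tools you never invoke: the \emph{selfinjectivity} of $\Pi(\Lambda)$ (which is where the $n$-RF hypothesis, as opposed to mere $n$-hereditariness, enters). Concretely, a second arrow $a_i'$ out of a chain-vertex $i<n-1$ would force $\Pi(\Lambda)e_i$ to have non-simple socle unless there is a commutation relation in $\Pi(\Lambda)$ involving both $a_i$ and $a_i'$; by the Koszul presentation of $\Pi(\Lambda)$ (Proposition \ref{thm:quiver_construction}) such a relation requires a maximal chain ending at the non-sink $i+1$, which is impossible. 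The residual case of an extra arrow at $n-1$ is then killed by a Loewy-length argument, again using selfinjectivity via \cite[Theorem 3.3]{MV99}.

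If you want to salvage a purely homological route, you would need to actually produce the cocycles: for instance, at a branch vertex one can sometimes show that $\Omega^j I$ is decomposable for a suitable injective $I$ and $0<j<n$ and invoke Proposition \ref{lem:indec_syz}, or compute $\Ext^1_\Lambda(I_v,\Lambda)\neq 0$ directly for the injective at the branch vertex. These computations do succeed in small test configurations, but you have not shown they succeed for an arbitrary relation pattern at a branch vertex (an arbitrary bipartite ``zero-graph'' between incoming and outgoing arrows), and that case analysis is precisely what the selfinjectivity argument circumvents. As written, the proposal is a plan whose critical step is both unexecuted and resting on a tool that is provably too weak in the closest analogous setting.
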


\begin{proof}
To begin with, we observe that, by Proposition \ref{prop:arrow_start_end}, every arrow in $\Lambda$ lies on some maximal path 
\[
0 \rightarrow 1 \rightarrow 2 \rightarrow \cdots  \rightarrow i \rightarrow i + 1 \rightarrow \cdots \rightarrow n - 2 \rightarrow n - 1 \rightarrow n
\]
in which every two consecutive arrows are a relation. Also note that $0$ must be a source and $n$ a sink. 

We begin by showing that there cannot exist an arrow in $\Lambda$ different from the one in the diagram above leaving a vertex $i$ with $i < n-1$. Indeed, say there was some such arrow $a_i' \colon i \rightarrow j$. Since $\Pi(\Lambda)$ is selfinjective, the projective at $i$ over $\Pi(\Lambda)$ cannot have a non-simple socle. Hence, there must be a commutation relation in $\Pi(\Lambda)$ starting at $i$ of the form $\alpha_ira_i + \Sigma_{k} \alpha_kr_k a_{i,k}$ with arrows $r$ and $r_k$ in $\Pi(\Lambda)$ (but not in $\Lambda$) and with $\alpha_j$ scalars, and $\alpha_i, \alpha_j \neq 0$ for some $j$. 

Indeed, to see that the latter claim must hold, note that if $pa_i,qa_i' \in \socmod \Pi(\Lambda)e_i$, then $pa_i - qa_i' \in I = \langle \rho_j \rangle$ with $\{\rho_j\}$ a set of minimal relations. In other words, $pa_i - qa_i' = \Sigma_j u_j \rho_j v_j$ where $u_j,v_j$ can be assumed to be paths up to scalars. 
Rewriting this as
\begin{align*}
pa_i - qa_i' 
& = \Sigma_j u_j \rho_j v_j \\ 
& = \Sigma_k u_k \rho_k v_k + \Sigma_l u_l \rho_l a_i + \Sigma_m u_m \rho_m a_i'
\end{align*}
with $v_k \neq a_i, a_i'$, we see thus that if $pa_i,qa_i'$ are non-zero in $\Pi(\Lambda)$ and $a_i\neq a_i'$, then
\[
 \Sigma_k u_k \rho_k v_k = pa_i - qa_i' - \Sigma_l u_l \rho_l a_i - \Sigma_m u_m \rho_m a_i' \neq 0.
\]
This also implies that we can set $v_k = 1$. 
Moreover, we observe that some $\rho_{j}$ is of the form $\alpha_{i} r a_{i} + \Sigma_{k} \alpha_{k} r_{k} a_{i,k}$ as stated above since $pa_{i}$ must occur as some term in some $u_{k} \rho_k$. 
If $pa_{i}$ was the only non-zero term in that $u_{k} \rho_{k}$, we would have $pa_{i} = 0$ in $\Pi(\Lambda)$, contrary to our assumptions. This establishes the claim.

Note that $\Lambda$ is Koszul, so by Proposition \ref{thm:quiver_construction}, we know that such a commutation relation and new arrows beginning in vertices $i+1$ and $j$ in the preprojective correspond exactly to elements in $K^n$ ending with arrows $a_i \colon i \rightarrow i +1$ and $a_i' \colon i \rightarrow j$, and differing only in the final arrow. 
However, there can be no such element ending in $i+1$ as $i+1 < n$ is not a sink. 

Since $\Lambda$ is $n$-RF if and only if $\Lambda^{\op}$ is $n$-RF, we have also shown that there are no arrows ending in $i$ with $1 < i$. Hence, without loss of generality, we can assume that if $\Lambda$ has quiver different from linearly oriented $\mathbb{A}_n$, then there must be an arrow $a_{n-1}'$ different from $a_{n-1}$ starting in $n-1$. 

Yet, if this was the case, the $\Pi(\Lambda)$-projective at $n-1$ would be of Loewy length $\geq 2$ whereas the $\Pi(\Lambda)$-projective at $i < n - 2$ would be of Loewy length $\leq 1$, as there cannot be any new arrows in $\Pi(\Lambda)$ not in $\Lambda$ out of $n-1$ as it is not a sink. This yields a contradiction by the fact that $\Pi(\Lambda)$ has homogeneous relations and \cite[Theorem 3.3]{MV99}. By using Vaso's classification (\cite{Vas19}) of $n$-RF algebras that are quotients of Nakayama algebras, we are done.
\end{proof}


\bibliographystyle{alpha}

\bibliography{bib_quiver_properties_hereditary.bib}

\def\cprime{$'$}
\begin{thebibliography}{HIMO14}

\bibitem[AIR15]{AIR15}
Claire Amiot, Osamu Iyama, and Idun Reiten.
\newblock Stable categories of {C}ohen-{M}acaulay modules and cluster
  categories.
\newblock {\em Amer. J. Math.}, 137(3):813--857, 2015.

\bibitem[ARS97]{ARS97}
Maurice Auslander, Idun Reiten, and Sverre~O. Smal{\o}.
\newblock {\em Representation theory of {A}rtin algebras}, volume~36 of {\em
  Cambridge Studies in Advanced Mathematics}.
\newblock Cambridge University Press, Cambridge, 1997.
\newblock Corrected reprint of the 1995 original.

\bibitem[Aus71]{Aus71}
Maurice Auslander.
\newblock Representation dimension of {Artin} algebras. {L}ecture notes,
  {Q}ueen {M}ary {C}ollege, {L}ondon, 1971.

\bibitem[Bar97]{Bar97}
Michael~J. Bardzell.
\newblock The alternating syzygy behavior of monomial algebras.
\newblock {\em J. Algebra}, 188(1):69--89, 1997.

\bibitem[BGL87]{BGL87}
Dagmar Baer, Werner Geigle, and Helmut Lenzing.
\newblock The preprojective algebra of a tame hereditary {A}rtin algebra.
\newblock {\em Comm. Algebra}, 15(1-2):425--457, 1987.

\bibitem[BGS96]{BGS96}
Alexander Beilinson, Victor Ginzburg, and Wolfgang Soergel.
\newblock Koszul duality patterns in representation theory.
\newblock {\em J. Amer. Math. Soc.}, 9(2):473--527, 1996.

\bibitem[BHon]{BH}
Ragnar-Olaf Buchweitz and Lutz Hille, In preparation.

\bibitem[Boc08]{Boc08}
Raf Bocklandt.
\newblock Graded {C}alabi {Y}au algebras of dimension 3.
\newblock {\em J. Pure Appl. Algebra}, 212(1):14--32, 2008.

\bibitem[DHZL08]{DHZL08}
A.~Dugas, B.~Huisgen-Zimmermann, and J.~Learned.
\newblock Truncated path algebras are homologically transparent.
\newblock In {\em Models, modules and abelian groups}, pages 445--461. Walter
  de Gruyter, Berlin, 2008.

\bibitem[DJW19]{DJW19}
Tobias Dyckerhoff, Gustavo Jasso, and Tashi Walde.
\newblock Simplicial structures in higher {A}uslander-{R}eiten theory.
\newblock {\em Adv. Math.}, 355:106762, 73, 2019.

\bibitem[GI19]{GI19}
Joseph Grant and Osamu Iyama.
\newblock Higher preprojective algebras, {K}oszul algebras, and
  superpotentials, 2019.
\newblock arXiv:1902.07878.

\bibitem[Hap89]{Hap89}
Dieter Happel.
\newblock Hochschild cohomology of finite-dimensional algebras.
\newblock In {\em S\'eminaire d'{A}lg\`ebre {P}aul {D}ubreil et {M}arie-{P}aul
  {M}alliavin, 39\`eme {A}nn\'ee ({P}aris, 1987/1988)}, volume 1404 of {\em
  Lecture Notes in Math.}, pages 108--126. Springer, Berlin, 1989.

\bibitem[HI11a]{HI11b}
Martin Herschend and Osamu Iyama.
\newblock {$n$}-representation-finite algebras and twisted fractionally
  {C}alabi-{Y}au algebras.
\newblock {\em Bull. Lond. Math. Soc.}, 43(3):449--466, 2011.

\bibitem[HI11b]{HI11}
Martin Herschend and Osamu Iyama.
\newblock Selfinjective quivers with potential and 2-representation-finite
  algebras.
\newblock {\em Compos. Math.}, 147(6):1885--1920, 2011.

\bibitem[HIMO14]{HIMO14}
Martin Herschend, Osamu Iyama, Hiroyuki Minamoto, and Steffen Oppermann.
\newblock Representation theory of geigle-lenzing complete intersections, 2014.
\newblock arXiv:1409.0668.

\bibitem[HIO14]{HIO14}
Martin Herschend, Osamu Iyama, and Steffen Oppermann.
\newblock {$n$}-representation infinite algebras.
\newblock {\em Adv. Math.}, 252:292--342, 2014.

\bibitem[IJ17]{IJ17}
Osamu Iyama and Gustavo Jasso.
\newblock Higher {A}uslander correspondence for dualizing {$R$}-varieties.
\newblock {\em Algebr. Represent. Theory}, 20(2):335--354, 2017.

\bibitem[IO11]{IO11}
Osamu Iyama and Steffen Oppermann.
\newblock {$n$}-representation-finite algebras and {$n$}-{APR} tilting.
\newblock {\em Trans. Amer. Math. Soc.}, 363(12):6575--6614, 2011.

\bibitem[IO13]{IO13}
Osamu Iyama and Steffen Oppermann.
\newblock Stable categories of higher preprojective algebras.
\newblock {\em Adv. Math.}, 244:23--68, 2013.

\bibitem[IW14]{IW14}
Osamu Iyama and Michael Wemyss.
\newblock Maximal modifications and {A}uslander-{R}eiten duality for
  non-isolated singularities.
\newblock {\em Invent. Math.}, 197(3):521--586, 2014.

\bibitem[Iya07a]{Iya07b}
Osamu Iyama.
\newblock Auslander correspondence.
\newblock {\em Adv. Math.}, 210(1):51--82, 2007.

\bibitem[Iya07b]{Iya07}
Osamu Iyama.
\newblock Higher-dimensional {A}uslander-{R}eiten theory on maximal orthogonal
  subcategories.
\newblock {\em Adv. Math.}, 210(1):22--50, 2007.

\bibitem[Iya11]{Iya11}
Osamu Iyama.
\newblock Cluster tilting for higher {A}uslander algebras.
\newblock {\em Adv. Math.}, 226(1):1--61, 2011.

\bibitem[JK19]{JK19}
Gustavo Jasso and Julian K\"{u}lshammer.
\newblock Higher {N}akayama algebras {I}: {C}onstruction.
\newblock {\em Adv. Math.}, 351:1139--1200, 2019.
\newblock With an appendix by K\"{u}lshammer and Chrysostomos Psaroudakis and
  an appendix by Sondre Kvamme.

\bibitem[Kel11]{Kel11}
Bernhard Keller.
\newblock Deformed {C}alabi-{Y}au completions.
\newblock {\em J. Reine Angew. Math.}, 654:125--180, 2011.
\newblock With an appendix by Michel Van den Bergh.

\bibitem[MM11]{MM11}
Hiroyuki Minamoto and Izuru Mori.
\newblock The structure of {AS}-{G}orenstein algebras.
\newblock {\em Adv. Math.}, 226(5):4061--4095, 2011.

\bibitem[MV99]{MV99}
Roberto Mart\'{\i}nez-Villa.
\newblock Graded, selfinjective, and {K}oszul algebras.
\newblock {\em J. Algebra}, 215(1):34--72, 1999.

\bibitem[Pas20]{Pas20}
Andrea Pasquali.
\newblock Self-injective {J}acobian algebras from {P}ostnikov diagrams.
\newblock {\em Algebr. Represent. Theory}, 23(3):1197--1235, 2020.

\bibitem[Pet19]{Pet19}
Samuel Pettersson.
\newblock Searching for self-injective planar quivers with potential.
\newblock Master's thesis, Department of Mathematics, Uppsala University, 2019.

\bibitem[Thi20]{Thi20}
Louis-Philippe Thibault.
\newblock Preprojective algebra structure on skew-group algebras.
\newblock {\em Adv. Math.}, 365:107033, 2020.

\bibitem[Vas19]{Vas19}
Laertis Vaso.
\newblock {$n$}-cluster tilting subcategories of representation-directed
  algebras.
\newblock {\em J. Pure Appl. Algebra}, 223(5):2101--2122, 2019.

\end{thebibliography}


\end{document}